\numberwithin{equation}{section}
\newtheorem{theorem}{Theorem}[section]
\newtheorem{lemma}[theorem]{Lemma}
\newtheorem{proposition}[theorem]{Proposition}
\newtheorem{corollary}[theorem]{Corollary}
\newtheorem{remark}[theorem]{Remark}
\newcommand{\bbE}{{\ensuremath{\mathbb E}} }
\newcommand{\bbP}{{\ensuremath{\mathbb P}} }
\newcommand{\bbZ}{{\ensuremath{\mathbb Z}} }
\newcommand{\cA}{{\ensuremath{\mathcal A}} }
\newcommand{\cB}{{\ensuremath{\mathcal B}} }
\newcommand{\cC}{{\ensuremath{\mathcal C}} }
\newcommand{\cD}{{\ensuremath{\mathcal D}} }
\newcommand{\cF}{{\ensuremath{\mathcal F}} }
\newcommand{\cG}{{\ensuremath{\mathcal G}} }
\newcommand{\cH}{{\ensuremath{\mathcal H}} }
\newcommand{\cI}{{\ensuremath{\mathcal I}} }
\newcommand{\cK}{{\ensuremath{\mathcal K}} }
\newcommand{\cL}{{\ensuremath{\mathcal L}} }
\newcommand{\cN}{{\ensuremath{\mathcal N}} }
\newcommand{\cS}{{\ensuremath{\mathcal S}} }
\newcommand{\cV}{{\ensuremath{\mathcal V}} }
\newcommand{\cW}{{\ensuremath{\mathcal W}} }
\newcommand{\ga}{\alpha}
\newcommand{\gd}{\delta}
\newcommand{\gep}{\varepsilon}       
\newcommand{\gs}{\sigma}
\renewcommand{\tilde}{\widetilde}          
\DeclareMathSymbol{\leqslant}{\mathalpha}{AMSa}{"36} 
\DeclareMathSymbol{\geqslant}{\mathalpha}{AMSa}{"3E} 
\DeclareMathSymbol{\eset}{\mathalpha}{AMSb}{"3F}     
\newcommand{\dd}{\text{\rm d}}             
\DeclareMathOperator*{\union}{\bigcup}       
\newcommand{\suptwo}[2]{\sup_{\substack{#1 \\ #2}}} 
\newcommand{\sumtwo}[2]{\sum_{\substack{#1 \\ #2}}} 
\newcommand{\R}{\mathbb{R}}
\newcommand{\Z}{\mathbb{Z}}
\newcommand{\N}{\mathbb{N}}
\def\bs{\boldsymbol}
\newcommand{\PEfont}{\mathrm}
\newcommand{\p}{\ensuremath{\PEfont P}}
\newcommand{\e}{\ensuremath{\PEfont E}}
\DeclareMathOperator{\sign}{sign}
\newcommand\bP{\ensuremath{\bs{\mathrm{P}}}}
\newcommand\bE{\ensuremath{\bs{\mathrm{E}}}}
\newcommand{\ind}{{\sf 1}}
\newcommand{\Po}{{\rm Po}}
\renewcommand{\epsilon}{\varepsilon}
\renewcommand{\theta}{\vartheta}
\renewcommand{\phi}{\varphi}
\DeclareMathOperator\argmax{arg\, max}
\def\rw{\mathrm{w}}
\def\rz{\mathrm{z}}
\newenvironment{myenumerate}{%
\renewcommand{\theenumi}{\arabic{enumi}}%
\renewcommand{\labelenumi}{{\rm(\theenumi)}}%
\begin{list}{\labelenumi}
        {%
        \setlength{\itemsep}{0.4em}%
        \setlength{\topsep}{0.5em}%
        \setlength\leftmargin{2.45em}%
        \setlength\labelwidth{2.05em}%
        \setlength{\labelsep}{0.4em}%
        \usecounter{enumi}%
        }%
        }%
{\end{list}
}
\renewenvironment{enumerate}{
\begin{myenumerate}}%
{\end{myenumerate}}
\newenvironment{myitemize}{%
\begin{list}{$\bullet$}%
        {%
        \setlength{\itemsep}{0.4em}%
        \setlength{\topsep}{0.5em}%
        \setlength\leftmargin{2.45em}%
        \setlength\labelwidth{2.05em}%
        \setlength{\labelsep}{0.4em}%
        }%
        }%
{\end{list}}
\renewenvironment{itemize}{
\begin{myitemize}}%
{\end{myitemize}}
\author{Francesco Caravenna}
\address{Dipartimento di Matematica e Applicazioni, Universit\`a
degli Studi di Milano-Bicocca, via Cozzi 53, 20125 Milano, Italy}
\email{francesco.caravenna@unimib.it}
\author{Philippe Carmona}
\address{Laboratoire de Math\'ematiques Jean Leray UMR 6629,
Universit\'e de Nantes, 2 Rue de la Houssini\`ere,
BP 92208, F-44322 Nantes Cedex 03, France}
\email{carmona@math.univ-nantes.fr}
\author{Nicolas P\'etr\'elis}
\address{Laboratoire de Math\'ematiques Jean Leray UMR 6629,
Universit\'e de Nantes, 2 Rue de la Houssini\`ere,
BP 92208, F-44322 Nantes Cedex 03, France}
\email{nicolas.petrelis@univ-nantes.fr}
\thanks{F.C. gratefully acknowledges the support of
the University of Padova under grant CPDA082105/08.}
\keywords{Parabolic Anderson Model, Directed Polymer,
Heavy Tailed Potential, Random Environment, Localization}
\subjclass[2010]{60K37; 82B44; 82B41}
\date{\today}
\title[Discrete-time PAM with heavy tailed potential]
{The discrete-time parabolic Anderson model\\ with heavy-tailed potential}
\date{\today}
\begin{document}

\begin{abstract}
We consider a discrete-time version of the parabolic Anderson model.
This may be described as a model for a directed $(1+d)$-dimensional polymer interacting with a random potential, 
which is constant in the deterministic direction and i.i.d. in the $d$ orthogonal directions.
The potential at each site is a
positive random variable with a polynomial tail at infinity. We show that,
as the size of the system diverges,
the polymer extremity is localized
almost surely at one single point which grows ballistically.
We give an explicit characterization of the localization point and of
the typical paths of the model.
\end{abstract}

\maketitle


\section{Introduction and results}

The model we consider is built on two main ingredients,
a random walk $S$ and a random potential $\xi$.
We start describing these ingredients.
A word about notation: throughout the paper, 
we denote by $|\cdot|$ the $\ell^1$ norm on $\R^d$, 
that is $|x| = |x_1| + \ldots + |x_d|$
for $x = (x_1, \ldots, x_d)$, and we set
$\cB_N := \{x \in \Z^d: |x| \le N\}$.


\subsection{The random walk}

Let $S = \{S_k\}_{k\ge 0}$ denote the coordinate process
on the space $\Omega_S := (\Z^d)^{\N_0 := \{0,1,2,\ldots\}}$,
that we equip as usual with the product topology and $\gs$-field.
We denote by $\p$ the law on $\Omega_S$ under which $S$ is a (lazy)
nearest-neighbor random walk started at zero, that is $\p(S_0 = 0) = 1$
and under $\p$ the variables $\{S_{k+1}-S_k\}_{k\ge 0}$ are i.i.d.
with $\p(S_1 = y) = 0$ if $|y| > 1$. We also assume the
following irreducibility conditions:
\begin{equation}\label{eq:rw0}
	\p(S_1 = 0) \,=:\, \kappa \,>\, 0\, \qquad \text{and} \qquad
	\p(S_1 = y) \,>\, 0 \quad \forall y \in \Z^d \text{ with } |y| = 1 \,.
\end{equation}
The usual assumption $\e(S_1) = 0$ is not necessary.
For $x \in \Z^d$, we denote by $\p_x$ the law of the random walk started at $x$,
that is $\p_x(S \in \cdot) := \p(S + x \in \cdot)$.

We could actually deal with random walks with finite range, i.e.,
for which there exists $R>0$ such that $\p(S_1 = y) = 0$ if $|y| > R$,
but we stick for simplicity to the case $R=1$.



\subsection{The random potential}

We let $\xi = \{\xi(x)\}_{x \in \Z^d}$ denote a family of i.i.d. random
variables taking values in $\R^+$,
defined on some probability space $(\Omega_\xi, \cF, \bbP)$,
which should not be confused with $\Omega_S$. We assume that
the variables $\xi(x)$ are Pareto distributed, that is
\begin{equation} \label{eq:pareto}
        \bbP( \xi(0) \in \dd x ) = \frac{\ga}{x^{1+\ga}}\,
        \ind_{[1,\infty)}(x) \, \dd x \,,
\end{equation}
for some $\alpha \in (0,\infty)$.
Although the precise assumption \eqref{eq:pareto} on the law of $\xi$
could be relaxed to a certain extent, we prefer to keep it for the sake of simplicity.

In the sequel we could work with the product space $\Omega_S \times \Omega_\xi$, equipped with
the product probability $\p \otimes \bbP$,
under which $\xi$ and $S$ are independent, but it is actually not necessary,
because $\xi$ and $S$ will play on a separate level, as it will be clear in a moment.


\subsection{The model}

Given $N \in \N := \{1,2,3,\ldots\}$ and a $\bbP$-typical
realization of the variables $\xi = \{\xi(y)\}_{y \in \Z^d}$,
our model is the probability $\bP_{N,\xi}$ on $\Omega_S$ defined by
\begin{equation} \label{eq:model}
	\frac{\dd \bP_{N,\xi}}{\dd \p}(S) \,:=\,
	\frac{1}{U_{N,\xi}} \, e^{H_{N,\xi}(S)} \,,
	\qquad \text{where} \qquad H_{N,\xi}(S) \,:=\, \sum_{i=1}^N \xi(S_i)\,,
\end{equation}
and the normalizing constant $U_{N,\xi}$ (\emph{partition function}) is of course
\begin{equation} \label{eq:U}
        U_{N,\xi} \,:=\, \e \left[ e^{H_{N,\xi}(S)} \right] \,=\,
        \e \left[ \exp\left(\sum_{i=1}^N \xi(S_i)\right) \right] \,.
\end{equation}
We stress that we are dealing with a \emph{quenched disordered model}:
we are interested in the properties of the law $\bP_{N,\xi}$
for $\bbP$-typical but \emph{fixed} realizations of the potential $\xi$.

Let us also introduce the
\emph{constrained partition function} $u_{N,\xi}(x)$, defined for $x \in \Z^d$ by
\begin{equation} \label{eq:u}
        u_{N,\xi}(x) \,:=\, 
        \e \left[ \exp\left(\sum_{i=1}^N \xi(S_i)\right)
		\ind_{\{S_N = x\}} \right] \,,
\end{equation}
so that $U_{N,\xi} = \sum_{x\in\Z^d} u_{N,\xi}(x)$.
Note that the law of $S_N$ under $\bP_{N,\xi}$ is given by
\begin{equation} \label{eq:p}
	p_{N,\xi}(x) \,:=\, \bP_{N,\xi}(S_N = x) \,=\, \frac{u_{N,\xi}(x)}{U_{N,\xi}}
	\,=\, \frac{u_{N,\xi}(x)}{\sum_{y\in\Z^d} u_{N,\xi}(y)} \,.
\end{equation}

The law $\bP_{N,\xi}$ admits the following interpretation:
the trajectories $\{(i, S_i)\}_{0 \le i \le N}$ model the configurations
of a $(1+d)$-dimensional directed polymer of length $N$ which interacts with the
random potential (or environment) $\{\xi(x)\}_{x\in\Z^d}$.
The random variable $\xi(x)$ should be viewed as a reward sitting at site $x \in \Z^d$,
so that the energy of each polymer configuration is given by the sum of the rewards visited by the polymer.
On an intuitive ground, the polymer configurations
should target the sites where the potential takes very large values.
The corresponding energetic gain entails of course an entropic loss, which however should not be too
relevant, in view of the heavy tail assumption \eqref{eq:pareto}.
As we are going to see, this is indeed
what happens, in a very strong form.

Besides the directed polymer interpretation, 
$\bP_{N,\xi}$ is a law on $\Omega_S = (\Z^d)^{\N_0}$ which
may be viewed as
a natural penalization of the random walk law $\p$. In particular,
when looking at the process $\{S_k\}_{k\ge 0}$ under the law $\bP_{N,\xi}$,
we often consider $k$ as a time parameter.

\begin{remark}\rm
An alternative interpretation of our model is to describe the
spatial distribution of a population evolving in time.
At time zero, the population consists of one
individual located at the site $x=0 \in \Z^d$.
In each time step, every individual in the population performs one step of the random walk $S$,
independently of all other individuals,
jumping from its current site $x$ to a site $y$ (possibly $y=x$)
and then splitting into a number of individuals
(always at site $y$) distributed like a $\Po(e^{\xi(y)})$,
where $\Po(\lambda)$ denotes the Poisson distribution of parameter $\lambda > 0$.
The expected number of individuals at site $x \in \Z^d$ at time $N \in \N$
is then given by $u_{N,\xi}(x)$, as one checks easily.
\end{remark}

\begin{remark}\rm
Our model is somewhat close in spirit to the much studied
\emph{directed polymer in random environment} \cite{MR1939654,MR2073332,MR2579463}, in which the rewards
$\xi(i,x)$ depend also on $i \in \N$ (and are usually chosen to be jointly i.i.d.).
In our model, the rewards are constant in the ``deterministic direction'' $(1,0)$,
a feature which makes the environment much more attractive from a localization viewpoint.
Notice in fact that a site $x$ with a large reward $\xi(x)$ yields
a favorable straight corridor $\{0, \ldots, N\} \times \{x\}$ for the polymer
$\{(i, S_i)\}_{0 \le i \le N}$.

We also point out that the so-called \emph{stretched polymer in random
environment} with a fixed length, considered e.g. in
\cite{cf:IV2}, is a model which provides an interpolation
between the directed polymer in random environment and our model.
\end{remark}


\subsection{The main results}

\smallskip

The closest relative of our model is
obtained considering the continuous-time
analog $\hat u_{t,\xi}(x)$ of \eqref{eq:u}, defined for $t \in [0,\infty)$ and $x \in \Z^d$ by
\begin{equation} \label{eq:hu}
        \hat u_{t,\xi}(x) \,:=\, 
        \e \left[ \exp\left(\int_0^t \xi(\hat S_u) \, \dd u\right)
		\ind_{\{\hat S_t = x\}} \right] \,,
\end{equation}
where $(\{\hat S_u\}_{u\in [0,\infty)}, \p)$ denotes the
continuous-time, simple symmetric random walk on $\Z^d$.
One can check that the function $\hat u_{t,\xi}(x)$ is the solution
of the following Cauchy problem:
\begin{equation*}
\left\{
\begin{split}
	\tfrac{\partial}{\partial t} \hat u_{t,\xi}(x) & \,=\, \Delta \hat u_{t,\xi}(x)
	+ \xi(x) \, \hat u_{t,\xi}(x) \\
	\hat u_{0,\xi}(x) & \,=\, \ind_{0}(x)
\end{split}
\right. \qquad \text{for } (t,x) \in (0,\infty) \times \Z^d \,,
\end{equation*}
known in the literature as the \emph{parabolic Anderson problem}.
We refer to \cite{cf:GM,cf:GK,cf:HKM} and references
therein for the physical motivations behind this
problem and for a survey of the main results.

When the potential $\xi$ is i.i.d. with heavy tails like in \eqref{eq:pareto}
and $\alpha > d$,
the asymptotic properties as $t\to\infty$ of the function $\hat u_{t,\xi}(\cdot)$
were investigated in \cite{cf:KLMS}, showing that a very strong form of localization
takes place: for large $t$, the function $\hat u_{t,\xi}(\cdot)$ is essentially concentrated
at two points almost surely and at a single point in probability.
More precisely, for all $t > 0$ and $\xi \in \Omega_\xi$
there exist $\hat\rz_{t, \xi}^{(1)}, \hat\rz_{t, \xi}^{(2)} \in \Z^d$ such that
\begin{gather} \label{eq:KLMSas}
	\lim_{t\to\infty} 
	\frac{\hat u_{t,\xi}(\hat\rz_{t,\xi}^{(1)}) + \hat u_{t,\xi}(\hat\rz_{t,\xi}^{(2)})}
	{\sum_{x\in\Z^d} \hat u_{t,\xi}(x)} \,=\, 1 \,, \qquad \bbP\text{-almost surely}\,,\\
	\label{eq:KLMSproba}
	\lim_{t\to\infty} \frac{\hat u_{t,\xi}(\hat\rz_{t,\xi}^{(1)})}
	{\sum_{x\in\Z^d} \hat u_{t,\xi}(x)} \,=\, 1 \,, \qquad \text{in $\bbP$-probability} \,,
\end{gather}
cf. \cite[Theorems~1.1 and~1.2]{cf:KLMS}. The points $\hat\rz_{t, \xi}^{(1)}, 
\hat\rz_{t, \xi}^{(2)}$
are typically at superballistic distance $(t/\log t)^{1+q}$
with $q = d/(\alpha - d) > 0$, cf. \cite[Remark~6]{cf:KLMS}.
We point out that localization at one
point like in \eqref{eq:KLMSproba} cannot hold $\bbP$-almost surely, that is,
the contribution of $\hat\rz_{t,\xi}^{(2)}$ cannot be removed from \eqref{eq:KLMSas}:
this is due to the fact that $\hat u_{t,\xi}(x)$ is a continuous function of $t$
for every fixed $x\in\Z^d$,
as explained in \cite[Remark~1]{cf:KLMS}.

\smallskip

It is natural to ask if the discrete-time counterpart
of $\hat u_{t,\xi}(\cdot)$, i.e., the constrained partition
function $u_{N,\xi}(\cdot)$ defined in \eqref{eq:u}, exhibits similar features.
Generally speaking, models built over discrete-time or continuous-time simple random walks
are not expected to be very different.
However, due to the heavy tail of the potential distribution,
the localization points $\hat\rz_{t, \xi}^{(1)}, \hat\rz_{t, \xi}^{(2)}$ 
of the continuous-time model
grow at a superballistic speed, a feature that is clearly impossible for the discrete-time
model, for which $u_{N,\xi}(x) \equiv 0$ for $|x| > N$.
Another interesting
question is whether for the discrete model one may have
localization at one single point $\bbP$-almost surely.
Before answering, we need to set up some notation.

We recall that $\cB_N := \{x \in \Z^d: |x| \le N\}$.
It is not difficult to check
that the values $\{p_{N, \xi}(x)\}_{x \in \cB_N}$ are all distinct,
for $\bbP$-a.e. $\xi \in \Omega_\xi$ and for all $N \in \N$,
because the potential distribution is continuous, cf. \eqref{eq:pareto}.
Therefore we can set
\begin{equation} \label{eq:rw}
	\rw_{N,\xi} \,:=\, \argmax \big\{ p_{N, \xi}(x):\
	x \in \cB_N \big\} \,,
\end{equation}
and $\bbP$-almost surely $\rw_{N,\xi}$ is a single point in $\Z^d$:
it is the point at which $p_{N, \xi}(\cdot)$ attains its maximum.
We can now state our first main result.

\begin{theorem}[One-site localization]\label{th:main}
We have
\begin{equation} \label{eq:1point}
	\lim_{N\to\infty} p_{N,\xi}(\rw_{N,\xi}) \,=\,
	\lim_{N\to\infty} \frac{u_{N,\xi}(\rw_{N,\xi})}{\sum_{x\in\Z^d} u_{N,\xi}(x)}
	\,=\, 1 \,, \qquad \text{$\bbP(\dd\xi)$-almost surely} \,.
\end{equation}
Furthermore, as $N \to \infty$ we have the following convergence in distribution:
\begin{equation} \label{eq:rweak}
	\frac{\rw_{N, \xi}}{N} \, \Longrightarrow \, \rw \,, \qquad \text{where} \qquad
	\bbP( \rw \in \dd x) \,=\, c_\alpha \, (1-|x|)^\alpha \, \ind_{\{|x| \le 1\}} \,
	\, \dd x \,,
\end{equation}
and $c_\alpha := (\int_{|y| \le 1} (1-|y|)^\alpha \dd y)^{-1}$.
\end{theorem}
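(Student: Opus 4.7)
The plan is to reduce both parts of the theorem to the single-site variational problem
\begin{equation*}
\Phi_N(y) \,:=\, \xi(y)(N - |y|)\,, \qquad y \in \cB_N\,,
\end{equation*}
which, up to entropic corrections, is the logarithmic contribution to $u_{N,\xi}(y)$ of the strategy ``walk straight to $y$ in $|y|$ steps, then sit there for the remaining $N-|y|$ steps.'' Let $\rz_N := \argmax_{y\in\cB_N}\Phi_N(y)$; note that $\rz_N$ is $\bbP$-a.s.\ uniquely defined, by the continuity of the Pareto distribution. I will show that $\bbP$-a.s.\ $\rw_{N,\xi} = \rz_N$ for all large $N$, and that $\rz_N/N \Rightarrow \rw$ in distribution with the density in \eqref{eq:rweak}; both assertions then follow.

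For the scaling limit of $\rz_N/N$, set $a_N := N^{d/\alpha}$. A direct extreme value computation from \eqref{eq:pareto} shows that the rescaled point process
\begin{equation*}
\Xi_N \,:=\, \sum_{y \in \cB_N}\, \delta_{(y/N,\,\xi(y)/a_N)}
\end{equation*}
converges in distribution, in the vague topology on $\{x \in \R^d : |x|\le 1\} \times (\epsilon, \infty)$ for each $\epsilon > 0$, to a Poisson point process $\Xi_\infty$ with intensity $\dd x \otimes \alpha z^{-1-\alpha}\dd z$. Since $\Phi_N(y)/(Na_N) = (\xi(y)/a_N)(1 - |y|/N)$ is a continuous functional whose maximum is attained at high $z$-values, a standard truncation argument together with the continuous mapping theorem yields $\rz_N/N \Rightarrow \argmax_{(x,z) \in \Xi_\infty} z(1-|x|)$. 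The change of variable $w := z(1 - |x|)$ maps $\Xi_\infty$ to a Poisson point process of intensity $\alpha(1-|x|)^\alpha \dd x \cdot w^{-1-\alpha}\dd w$; by standard Palm calculus, the mark $x$ of the point with largest $w$-coordinate has density $c_\alpha(1 - |x|)^\alpha \ind_{\{|x| \le 1\}}$, exactly matching \eqref{eq:rweak}.

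To prove the one-site localization \eqref{eq:1point} and the identity $\rw_{N,\xi} = \rz_N$, I would establish matching bounds on $u_{N,\xi}$. The lower bound follows by restricting the expectation in \eqref{eq:u} to the single path that walks along a fixed shortest lattice route to $\rz_N$ and then loops at $\rz_N$ using the laziness probability $\kappa > 0$ from \eqref{eq:rw0}, giving
\begin{equation*}
u_{N,\xi}(\rz_N) \,\ge\, c_0^{\,|\rz_N|}\,\kappa^{\,N - |\rz_N|}\,\exp\bigl(\Phi_N(\rz_N)\bigr)\,.
\end{equation*}
For the upper bound on $u_{N,\xi}(x)$ with $x \ne \rz_N$, the key observation is that any path from $0$ to $x$ in $N$ steps can spend at most $(N - |\rz_N| - |\rz_N - x|)_+$ steps at $\rz_N$; an iterative peeling argument that controls each excursion away from $\rz_N$ by a smaller one-site problem on $\cB_N \setminus \{\rz_N\}$ (with $\rz_N$ replaced by its $\Phi_N$-runner-up $\rz'_N$) then leads to
\begin{equation*}
\frac{u_{N,\xi}(x)}{u_{N,\xi}(\rz_N)} \,\le\, \exp\bigl(- c\,\xi(\rz_N)\,|\rz_N - x| + o(a_N N)\bigr)
\end{equation*}
for $x$ in a neighborhood of $\rz_N$, with a strictly negative exponent of order $a_N N$ for $x$ farther away. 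Since $\xi(\rz_N) \asymp a_N = N^{d/\alpha}$ $\bbP$-a.s.\ eventually (an easy consequence of \eqref{eq:pareto} via Borel--Cantelli applied to annular subdomains of $\cB_N$), summing over the $O(N^d)$ sites of $\cB_N \setminus \{\rz_N\}$ gives $\sum_{x \ne \rz_N} u_{N,\xi}(x) / u_{N,\xi}(\rz_N) \to 0$, simultaneously proving $p_{N,\xi}(\rz_N) \to 1$ and forcing $\rw_{N,\xi} = \rz_N$ for all large $N$.

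The main obstacle is making the upper bound on $u_{N,\xi}(x)$ rigorous: a path ending at $x$ can in principle visit several hot spots of the potential, and the naive bound ``maximum of $\xi$ over visited sites times $N$'' is far too weak to establish one-site dominance. The remedy is the iterative peeling decomposition, in which each excursion away from $\rz_N$ is treated as an independent shorter polymer problem on $\cB_N \setminus \{\rz_N\}$, reducing eventually to configurations dominated by a single $\Phi_N$-term. A secondary point is to promote the distributional gap $\Phi_N(\rz_N) - \Phi_N(\rz'_N) \asymp a_N N$ to a $\bbP$-a.s.\ eventual lower bound of the correct order; this is handled by a Borel--Cantelli argument applied to a dyadic annulus decomposition of $\cB_N$.
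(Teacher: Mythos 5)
Your central claim --- that $\bbP$-a.s.\ $\rw_{N,\xi} = \rz_N$ for all large $N$, where $\rz_N$ maximizes the one-site functional $\Phi_N(y) = \xi(y)(N-|y|)$ --- is false, and this breaks the whole plan. The paper (Remark~\ref{re:zn2} and Appendix~\ref{ap:zn2}) shows that in $d=1$ one has $\rw_{N,\xi} = z_{N,\xi}^{(2)}$ (the \emph{runner-up} of the modified field $\psi_N$, which is essentially your $\Phi_N$) for \emph{infinitely many} $N$, $\bbP$-a.s. The reason the one-site variational problem cannot pin down $\rw_{N,\xi}$ eventually is quantitative: the gap $Z_N^{(1)} - Z_N^{(2)}$ can dip as low as $N^{d/\alpha-1}$ up to log factors (Appendix~\ref{sec:gap12?}), whereas the contributions along the paths to the two candidate sites differ by sums $\sum_{i}\xi(S_i^*)$ of order $N^{1-1/\alpha+d/\alpha}$. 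When the endpoint gap is small, the in-path field values --- which your $\Phi_N$ ignores entirely --- swing the balance, sometimes to the runner-up. For the same reason, your claimed uniform upper bound $u_{N,\xi}(x)/u_{N,\xi}(\rz_N) \le \exp(-c\,\xi(\rz_N)|\rz_N - x| + o(a_N N))$ cannot hold for $x = z_{N,\xi}^{(2)}$: by the above, that ratio is infinitely often of order $1$ or larger, so there is no strictly negative exponent separating $\rz_N$ from its nearest competitor. The deterministic identity you need simply does not exist.

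What actually can be proven a.s.\ is localization at the \emph{pair} $\{z_{N,\xi}^{(1)}, z_{N,\xi}^{(2)}\}$, and then a separate argument to show that the total mass a.s.\ ends up entirely on one of them (without controlling which). The paper's route is: Proposition~\ref{th:gap13} gives an a.s.\ eventual lower bound $Z_N^{(1)} - Z_N^{(3)} \gtrsim N^{d/\alpha}/(\log N)^\beta$ (note: the gap to the \emph{third}, not the second, which is why two sites are the right granularity), from which two-site localization follows by the combinatorial bound \eqref{eq:esti} on how long a discrete walk reaching a distant site can spend at the top-$k$ sites. The reduction to one site is then achieved in Section~\ref{sec:loc1} Step~2, which shows $|\log \bP_{N,\xi}(\tilde\cW_{1,N}) - \log \bP_{N,\xi}(\tilde\cW_{2,N})| \to \infty$ a.s.\ by conditioning on $\cG_x$ and exploiting the Lipschitz dependence of $C_n(x)$ on $\xi(x)$ to show the two log-weights a.s.\ eventually differ by $\ge N^{d/\alpha - \epsilon}$, regardless of which is larger. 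This conditional anti-concentration step is the key missing idea in your proposal; the ``iterative peeling'' you sketch does not substitute for it, and you yourself flag it as the unresolved obstacle.

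Your point-process derivation of \eqref{eq:rweak} for $\rz_N/N$ is reasonable and could be made rigorous (the paper does the same computation directly via Riemann sums, but the Poisson-process formulation is equivalent). However, since $\rw_{N,\xi} \ne \rz_N$ infinitely often, you cannot upgrade to a statement about $\rw_{N,\xi}/N$ by eventual a.s.\ identity. You need the weaker in-probability statement $\bbP(\rw_{N,\xi} = z_{N,\xi}^{(1)}) \to 1$ (Corollary~\ref{th:corcor}, \eqref{eq:wz12}), which is itself a consequence of the two-step argument above and of the in-probability one-site localization \eqref{eq:1pointbis}; your plan has no mechanism for producing this.
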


\noindent
Recalling the definition \eqref{eq:p} of $p_{N,\xi}(x)$, Theorem~\ref{th:main}
shows that $S_N$ under $\bP_{N,\xi}$ is localized
at the ballistic point $\rw_{N, \xi} \approx \rw \cdot N$. 

Next we look more closely at the localization
site $\rw_{N,\xi}$. We introduce two points
$z_{N,\xi}^{(1)}, z_{N,\xi}^{(2)} \in \Z^d$, defined explicitly in terms of the
potential $\xi$, through
\begin{equation} \label{eq:z12}
\begin{split}
	z_{N,\xi}^{(1)} \,& :=\, \argmax \left\{ \left( 1 - \tfrac{|x|}{N+1} \right) \xi(x) \,:\
	x \in \cB_N \right\} \,, \\
	z_{N,\xi}^{(2)} \,& :=\, \argmax \left\{ \left( 1 - \tfrac{|x|}{N+1} \right) \xi(x) \,:\
	x \in \cB_N \setminus \big\{ z_{N,\xi}^{(1)} \big\} \right\} \,.
\end{split}
\end{equation}
Again, the values of $\{( 1 - \tfrac{|x|}{N+1} ) \xi(x)\}_{x \in \cB_N}$ are $\bbP$-a.s. distinct, by the continuity of the
potential distribution, therefore
$z_{N,\xi}^{(1)}$ and $z_{N,\xi}^{(2)}$ are $\bbP$-a.s. single points in $\cB_N$.
We can now give the discrete-time analogues of \eqref{eq:KLMSas} and \eqref{eq:KLMSproba}.

\begin{theorem}[Two-sites localization]\label{th:theowz}
The following relations hold:
\begin{gather} \label{eq:2points}
	\lim_{N\to\infty} \Big( p_{N,\xi} \big( z_{N,\xi}^{(1)} \big)
	+ p_{N,\xi} \big( z_{N,\xi}^{(2)} \big) \Big)
	\,=\, 1 \qquad \text{$\bbP(\dd\xi)$-almost surely} \,, \\
	\label{eq:1pointbis}
	\lim_{N\to\infty} p_{N,\xi} \big( z_{N,\xi}^{(1)} \big)
	\,=\, 1 \qquad \text{in $\bbP(\dd\xi)$-probability}\,.
\end{gather}
\end{theorem}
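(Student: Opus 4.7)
The guiding heuristic is that the dominant contribution to $u_{N,\xi}(x)$ in \eqref{eq:u} comes from the class of paths that reach $x$ along some nearest-neighbor geodesic in $|x|$ steps and then remain at $x$ for the remaining $N-|x|$ steps, exploiting the laziness $\kappa > 0$ from \eqref{eq:rw0}. Since this strategy contributes a term of order $e^{(N+1-|x|)\xi(x)}$ (there are $N+1-|x|$ summands $\xi(S_i)=\xi(x)$ in $H_{N,\xi}(S)$), the quantity $\log u_{N,\xi}(x)$ should behave to leading order like $(N+1-|x|)\xi(x)$, which is precisely the functional whose top two argmaxima on $\cB_N$ are, by definition \eqref{eq:z12}, $z^{(1)}_{N,\xi}$ and $z^{(2)}_{N,\xi}$. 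The plan is to turn this picture into quantitative estimates and combine it with extreme value theory for the i.i.d.\ Pareto variables $\{\xi(x)\}_{x\in\cB_N}$.

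The first step is therefore to establish matching upper and lower bounds of the form
\[
	\log u_{N,\xi}(x) \,=\, (N+1-|x|)\,\xi(x) \,+\, R_N(x)\,,
\]
valid uniformly in $x\in\cB_N$, with a remainder $R_N(x)$ negligible compared both to the leading term and to the typical gaps of interest. The lower bound follows easily by restricting the expectation \eqref{eq:u} to the explicit paths described above and using \eqref{eq:rw0}. The upper bound is the delicate part and constitutes the main obstacle: one must rule out the possibility that the polymer can gain more by spreading its visits across several high-reward sites rather than concentrating on a single one; these fine estimates are presumably supplied by the machinery developed earlier in the paper for the proof of Theorem~\ref{th:main}.

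Granted such an asymptotic, the in-probability statement \eqref{eq:1pointbis} reduces to showing that the weighted gap
\[
	\big(N+1-|z^{(1)}_{N,\xi}|\big)\,\xi\big(z^{(1)}_{N,\xi}\big) \,-\, \big(N+1-|z^{(2)}_{N,\xi}|\big)\,\xi\big(z^{(2)}_{N,\xi}\big)
\]
dominates the fluctuations of $R_N$ with $\bbP$-probability tending to one, since this gap controls $\log\big(u_{N,\xi}(z^{(1)}_{N,\xi})/u_{N,\xi}(z^{(2)}_{N,\xi})\big)$. After suitable rescaling, the point process of weighted potentials $(N+1-|x|)\xi(x)$ on $\cB_N$ converges to a Poisson point process with the heavy-tailed intensity induced by \eqref{eq:pareto}, and the spacing between its top two points diverges in probability.

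Finally, the almost-sure statement \eqref{eq:2points} amounts to showing
\[
	\sum_{x \in \cB_N \setminus \{z^{(1)}_{N,\xi},\, z^{(2)}_{N,\xi}\}} u_{N,\xi}(x) \,=\, o\big( u_{N,\xi}(z^{(2)}_{N,\xi}) \big) \qquad \bbP\text{-almost surely.}
\]
Via the uniform asymptotic above, this reduces to controlling, shell by shell $\{x \in \cB_N : |x|\asymp rN\}$, the number of sites whose weighted potential exceeds a given level, and then running a Borel--Cantelli argument to show that the third order statistic of $(N+1-|x|)\xi(x)$ is separated from the second by an amount diverging almost surely on the relevant scale. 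The reason \eqref{eq:1pointbis} cannot be strengthened to an almost-sure statement (whereas \eqref{eq:2points} holds almost surely) is that the ratio of the top two order statistics of Pareto variables has a non-degenerate limiting distribution: the gap between the first two weighted potentials is atypically small for infinitely many $N$, and aggregating two sites is exactly what absorbs these fluctuations.
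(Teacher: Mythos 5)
There is a genuine gap, concentrated in the step you yourself flag as the ``main obstacle.'' You propose a uniform asymptotic $\log u_{N,\xi}(x)=(N+1-|x|)\,\xi(x)+R_N(x)$ with $R_N(x)$ negligible uniformly over $x\in\cB_N$, and then defer the upper bound to ``machinery developed earlier in the paper for the proof of Theorem~\ref{th:main}.'' Two problems. First, such a uniform asymptotic is simply false: for a generic $x$ with $\xi(x)=O(1)$, a trajectory may travel to a high-potential site $y$ with $\xi(y)\asymp N^{d/\alpha}$, linger there, and then move to $x$ by time $N$, so that $\log u_{N,\xi}(x)$ is of order $N\cdot N^{d/\alpha}$ rather than $(N+1-|x|)\xi(x)=O(N)$; the identity you want can only hold for the optimizers, and proving it there is precisely the content of the theorem, not a black box one can invoke. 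Second, the logical order is reversed: in the paper, Theorem~\ref{th:theowz} is proved \emph{before} Theorem~\ref{th:main} (Sections~\ref{sec:loc2} and~\ref{sec:loc1} respectively), and the one-point statement leans on the two-point proof, not vice versa.

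The idea your plan is missing is the change of partition variable. Rather than slicing by the endpoint $\{S_N=x\}$, the paper slices by the event $\cA_{i,N}$ that the \emph{highest-potential site visited} before time $N$ is $z_N^{(i)}$ (via the index $\beta_N(S)$ in \eqref{eq:betaN}--\eqref{eq:AN}). The combinatorial observation that a discrete-time nearest-neighbour walk reaching $x$ must occupy at least $|x|$ distinct sites yields, via \eqref{eq:esti}, the uniform energy bound \eqref{eq:esti2}, $H_N(S)\le (N+1)\psi_N\big(x_N^{(\beta_N(S))}\big)+(k-1)X_N^{(1)}+N\,X_N^{(k+1)}$, valid for all paths, which is what lets one compare against the lower bound \eqref{eq:lbU} and drive $\bP_{N,\xi}((\cA_{1,N}\cup\cA_{2,N})^c)\to 0$ using the gap $Z_N^{(1)}-Z_N^{(3)}$ from Proposition~\ref{th:gap13}. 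The transfer from $\cA_{i,N}$ to $\{S_N=z_N^{(i)}\}$ is then done separately, cf.\ \eqref{eq:bigtool}, by a last-passage decomposition at $z_N^{(i)}$ that exploits the gap $X_N^{(J_i)}-X_N^{(J_i+1)}\to\infty$. Your intuition for why \eqref{eq:1pointbis} is only an in-probability statement is correct, and your Poisson point process heuristic is sound, but without the $\cA_{i,N}$ decomposition and the $|x|$-distinct-sites geometric bound you do not have a proof of the upper bound, and hence not a proof of either assertion.
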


\smallskip
\noindent
Putting together Theorems~\ref{th:main} and~\ref{th:theowz},
we obtain the following information on~$\rw_{N,\xi}$.

\begin{corollary} \label{th:corcor}
For $\bbP$-a.e. $\xi \in \Omega_\xi$, we have 
$\rw_{N,\xi} \in \{z_{N,\xi}^{(1)}, z_{N,\xi}^{(2)} \}$
for large $N$. Furthermore,
\begin{equation} \label{eq:wz12}
	\lim_{N\to\infty} \bbP\left(\rw_{N, \xi} = z_{N,\xi}^{(1)}\right) \,=\, 1\,.
\end{equation}
\end{corollary}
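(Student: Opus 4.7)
The corollary is a soft consequence of Theorems~\ref{th:main} and~\ref{th:theowz}, obtained by purely combinatorial considerations on the probability measure $p_{N,\xi}(\cdot)$; no new probabilistic input is needed. The plan is to read the two statements of the theorems against each other.

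For the first assertion, I would argue by contradiction on the event of full $\bbP$-measure where both \eqref{eq:1point} and \eqref{eq:2points} hold. Suppose that along some subsequence $\rw_{N,\xi}\notin\{z^{(1)}_{N,\xi},z^{(2)}_{N,\xi}\}$. Since $p_{N,\xi}$ is a probability measure on $\Z^d$, the three distinct points $\rw_{N,\xi}$, $z^{(1)}_{N,\xi}$, $z^{(2)}_{N,\xi}$ satisfy
\[
p_{N,\xi}(\rw_{N,\xi})+p_{N,\xi}(z^{(1)}_{N,\xi})+p_{N,\xi}(z^{(2)}_{N,\xi})\le 1.
\]
But by \eqref{eq:1point} the first term tends to $1$ and by \eqref{eq:2points} the sum of the last two tends to $1$, giving a total limit of $2$, a contradiction. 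Hence along the full-measure event $\rw_{N,\xi}\in\{z^{(1)}_{N,\xi},z^{(2)}_{N,\xi}\}$ eventually.

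For \eqref{eq:wz12}, the key observation is that whenever $p_{N,\xi}(z^{(1)}_{N,\xi})>\tfrac12$, the point $z^{(1)}_{N,\xi}$ automatically carries strictly more mass than any other single site (which has mass $<\tfrac12$), and hence $z^{(1)}_{N,\xi}=\argmax_{x\in\cB_N}p_{N,\xi}(x)=\rw_{N,\xi}$. Therefore
\[
\bbP\bigl(\rw_{N,\xi}=z^{(1)}_{N,\xi}\bigr)\,\ge\,\bbP\Bigl(p_{N,\xi}\bigl(z^{(1)}_{N,\xi}\bigr)>\tfrac12\Bigr),
\]
and the right-hand side converges to $1$ by the in-probability convergence \eqref{eq:1pointbis}.

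There is essentially no obstacle: the content has been absorbed entirely into the two theorems, and the corollary amounts to exploiting the normalization $\sum_x p_{N,\xi}(x)=1$ together with the elementary fact that a point of mass exceeding $1/2$ is the unique maximizer.
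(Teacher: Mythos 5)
Your proof is correct and is exactly the elementary "mass-accounting" argument the paper implicitly intends when it says the corollary follows by "putting together" Theorems~\ref{th:main} and~\ref{th:theowz}: the first claim by contradiction against $\sum_x p_{N,\xi}(x)=1$, the second by noting that a site with mass exceeding $1/2$ must be the unique maximizer, combined with the in-probability convergence of \eqref{eq:1pointbis}. No gap, and no genuinely different route from what the paper suggests.
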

\begin{remark}\label{re:zn2} \rm 
We stress that the convergence in \eqref{eq:1pointbis}
does not occur $\bbP(\dd\xi)$-almost surely, i.e., $\rw_{N,\xi}$ is not 
equal to $z_N^{(1)}$ for all $N$ large enough,
at least in dimension $d=1$. In fact, in Appendix~\ref{ap:zn2} we show
explicitly that when $d=1$
\begin{equation} \label{eq:wz112}
	\bbP\left(\rw_{N,\xi} = z_{N,\xi}^{(2)} 
	\text{ for infinitely many $N$} \right) \,=\, 1 \,.
\end{equation}
We believe that \eqref{eq:wz112} remains true also for $d > 1$.

\end{remark}

\smallskip

The proof of two sites localization given in \cite{cf:KLMS} for the
continuous-time model is quite technical and
exploits tools from potential theory and spectral analysis.
We point out that 
such tools can be applied also in the discrete-time setting,
but they turn out to be unnecessary. Our proof is indeed based 
on shorter and simpler geometric arguments. 
For instance, we exploit the fact that before reaching a site $x\in \Z^d$
a discrete-time random walk path must visit a least $|x|-1$ different sites ($\neq x$) 
and spend at each of them a least one time unit. 
Of course, this is no longer true for continuous-time random walks.



\subsection{Further path properties}

Theorem \ref{th:main} states that $\bbP(\dd\xi)$-a.s. the probability measure 
$\bP_{N,\xi}$ concentrates on the subset of $\Omega_S$ 
gathering those
random walk trajectories $S$ such that $S_N = \rw_{N,\xi}$. 
It turns out that this subset can be radically narrowed.
In fact, we can introduce a
restricted subset $\cC_{N,\xi} \subseteq \Omega_S$ of random walk trajectories,
defined as follows:
\begin{itemize}
\item the trajectories in $\cC_{N,\xi}$ must reach the site $\rw_{N,\xi}$
for the first time before time $N$, following an injective path, and then must remain at 
$\rw_{N,\xi}$ until time $N$;

\item the length of the injective path until $\rw_{N,\xi}$
differs from $|\rw_{N,\xi}$| --- which is the
minimal one --- at most for a small error term
$h_N := (\log\log N)^{2/\alpha}\, N^{1-1/\alpha}$ if $\alpha > 1$
and $h_N := (\log N)^{1+2/\alpha}$ if $\alpha \le 1$ (note that
in any case $h_N = o(N)$);

\item all the sites $x$ visited by the random walk before reaching $\rw_{N,\xi}$ must 
have an associated field $\xi(x)$ that is strictly smaller than $\xi(\rw_{N,\xi})$.
\end{itemize}
More formally, denoting by $\tau_{x} = \tau_{x}(S) := \inf\{n\geq 0\colon\, S_n=x\}$
the first passage time at $x \in \Z^d$ of a random walk trajectory $S$, we set
\begin{equation}\label{eq:defc}
\begin{split}
	\cC_{N,\xi} \,:=\, \Big\{ S \in \Omega_S \colon & \
	S_i\neq S_j \ \forall i<j\leq  \tau_{\rw_{N,\xi}}\,, \
	S_i = \rw_{N,\xi} \ \forall i \in \{\tau_{\rw_{N,\xi}}, \ldots, N\} \,, \\
	& \ \xi(S_i) < \xi(\rw_{N,\xi}) \ \forall i< \tau_{w_{N,\xi}} \,, \
	\tau_{\rw_{N,\xi}} \le |\rw_{N,\xi}| + h_N \Big\} \,.
\end{split}
\end{equation}
We then have the following result.

\begin{theorem}\label{th:theopath}
For $\bbP$-a.e. $\xi \in \Omega_\xi$, we have
\begin{equation} \label{eq:theopath}
\lim_{N\to \infty} \bP_{N,\xi}(\cC_{N,\xi})=1.
\end{equation}
\end{theorem}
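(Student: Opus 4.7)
My strategy is to show that paths in $\cC_{N,\xi}^c$ contribute an asymptotically negligible fraction of the partition function. Since Theorem~\ref{th:main} already gives $\bP_{N,\xi}(S_N=\rw_{N,\xi}) \to 1$ $\bbP$-a.s., the goal reduces to showing that the contribution of bad paths ending at $\rw_{N,\xi}$ to $u_{N,\xi}(\rw_{N,\xi})$ is $o(u_{N,\xi}(\rw_{N,\xi}))$ $\bbP$-a.s. The benchmark is the lower bound obtained by restricting the expectation in \eqref{eq:u} to a single shortest-then-lazy trajectory in $\cC_{N,\xi}$, namely
\begin{equation*}
u_{N,\xi}(\rw_{N,\xi}) \,\ge\, c_1^{\,|\rw_{N,\xi}|}\,\kappa^{N-|\rw_{N,\xi}|}\,\exp\bigl((N-|\rw_{N,\xi}|+1)\,\xi(\rw_{N,\xi})\bigr),
\end{equation*}
where $c_1 > 0$ is the smallest nonzero nearest-neighbor step probability supplied by \eqref{eq:rw0}.

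I would split $\cC_{N,\xi}^c \cap \{S_N = \rw_{N,\xi}\}$ into the four events corresponding to the four defining conditions in \eqref{eq:defc}. The conditions that $S$ remain at $\rw_{N,\xi}$ after $\tau_{\rw_{N,\xi}}$ and that $\xi(S_i) < \xi(\rw_{N,\xi})$ for $i < \tau_{\rw_{N,\xi}}$ are amenable to a common swap argument: replacing a $k$-step excursion, or a visit to some $y$ with $\xi(y) \geq \xi(\rw_{N,\xi})$, by $k$ lazy steps at $\rw_{N,\xi}$ produces an energetic gain of the form $k[\xi(\rw_{N,\xi}) - \xi_{\max,\text{vis}}]$. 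By Corollary~\ref{th:corcor} we may assume $\rw_{N,\xi} \in \{z^{(1)}_{N,\xi}, z^{(2)}_{N,\xi}\}$ for large $N$, and the argmax inequality $(N+1-|y|)\xi(y) \le (N+1-|\rw_{N,\xi}|)\xi(\rw_{N,\xi})$ from \eqref{eq:z12} forces this gain to be strictly positive with a uniform geometric slack per unit of bad time. The remaining two conditions --- injectivity up to $\tau_{\rw_{N,\xi}}$ and $\tau_{\rw_{N,\xi}} \le |\rw_{N,\xi}| + h_N$ --- both amount to controlling the ``extra steps'' the walk uses to reach $\rw_{N,\xi}$, so I would merge them into a single estimate on $\{\tau_{\rw_{N,\xi}} = n\}$ with $n \ge |\rw_{N,\xi}| + h_N$.

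This last estimate is the technical heart of the proof and the main obstacle. For such $n$ the walk uses $k = n - |\rw_{N,\xi}|$ extra time units, and the geometric fact emphasized after Theorem~\ref{th:theowz} --- each distinct intermediate site consumes at least one time unit --- bounds the energetic gain $\sum_{i=1}^{n-1} \xi(S_i)$ from above by the sum of the top $n$ values of $\xi$ on $\cB_N$ that lie strictly below $\xi(\rw_{N,\xi})$. By Pareto extreme-value statistics this top-sum has order $n^{1-1/\alpha} N^{d/\alpha}$ for $\alpha > 1$ and is dominated by the leading order statistic for $\alpha \le 1$, whereas the energy sacrificed by not sitting $k$ extra steps at $\rw_{N,\xi}$ is $k\,\xi(\rw_{N,\xi}) \asymp k\,N^{d/\alpha}$. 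Combined with the combinatorial factor counting walks of length $n$ starting from $0$, this shows that the bad contribution is dominated by the reference lower bound exactly when $k \gg h_N$: the polynomial factor $N^{1-1/\alpha}$ (resp.\ $(\log N)^{1+2/\alpha}$ for $\alpha \le 1$) in $h_N$ matches the crossover of the gain--loss balance, while the additional $(\log \log N)^{2/\alpha}$ correction provides the extra slack needed to upgrade convergence in probability to $\bbP$-a.s.\ convergence via a Borel--Cantelli argument along a sufficiently sparse subsequence.
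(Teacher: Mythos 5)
Your overall plan --- reduce via Theorem~\ref{th:main} to paths ending at $\rw_{N,\xi}$, split off the bad ones, and control the $\tau_{\rw_{N,\xi}} > |\rw_{N,\xi}| + h_N$ tail by comparing the top order statistics (as in \eqref{eq:sumbou}) against the loss $h_N\,\xi(\rw_{N,\xi})$ --- has the right flavor, and your tail estimate is essentially the paper's Step~2, \eqref{eq:st2}. The genuine gap is in the swap argument for the condition $\xi(S_i) < \xi(\rw_{N,\xi})$ for $i < \tau_{\rw_{N,\xi}}$. Replacing a visit to a site $y$ with $\xi(y) \ge \xi(\rw_{N,\xi})$ by a lazy step at $\rw_{N,\xi}$ is an energetic \emph{loss}, not a gain, and the argmax inequality $(N+1-|y|)\xi(y) \le (N+1-|\rw_{N,\xi}|)\xi(\rw_{N,\xi})$ does not produce a positive per-step gain: it constrains the \emph{total} contribution of sitting at $y$ versus at $\rw_{N,\xi}$ for the maximal time, not a per-unit-time comparison. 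It can even fail outright --- when $\rw_{N,\xi}=z^{(2)}_{N,\xi}$, which happens for infinitely many $N$ when $d=1$ by \eqref{eq:wz112}, the site $y = z^{(1)}_{N,\xi}$ has $\psi_N(y) = Z_N^{(1)} > Z_N^{(2)} = \psi_N(\rw_{N,\xi})$. In that situation a walk that travels to $z^{(1)}_{N,\xi}$, sits there for most of the time, and drops into $z^{(2)}_{N,\xi}$ just before time $N$ has energy close to $(N+1)Z_N^{(1)}$, which exceeds your single-trajectory benchmark $e^{(N+1)Z_N^{(2)}+O(N)}$; and the gap $Z_N^{(1)}-Z_N^{(2)}$ is not under your control at this stage --- it can be as small as $N^{d/\alpha-1}$ up to logarithms, cf.\ Appendix~\ref{sec:gap12?}.

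The paper never compares bad paths against a single reference trajectory at $\rw_{N,\xi}$. It takes as input the Section~4 result \eqref{eq:loctildestrong}, that $\max_i \bP_{N,\xi}(\tilde\cW_{i,N}) \to 1$ a.s., where $\tilde\cW_{i,N}$ already bakes in both the ``no worse sites'' constraint $\beta_N(S)=J_i$ and the lower bound $\ell_N(z_N^{(i)}) > (N-|z_N^{(i)}|)/2$; Theorem~\ref{th:theopath} only adds injectivity \eqref{eq:st1} and the $h_N$ bound \eqref{eq:st2} on top of $\tilde\cW_{i,N}$. The ``no worse sites'' constraint itself is established in Section~3 via the geometric Hamiltonian bound \eqref{eq:esti2} --- any trajectory reaching its best visited site $x_N^{(\beta_N(S))}$ must waste at least $|x_N^{(\beta_N(S))}|-k$ time units on sites outside the top $k$ --- combined with the gap $Z_N^{(1)}-Z_N^{(3)}$ of Proposition~\ref{th:gap13}, always measured against $U_N \gtrsim e^{(N+1)Z_N^{(1)}}$. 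That is a mechanism your per-unit-time swap does not reproduce. Your excursion swap after $\tau_{\rw_{N,\xi}}$ is salvageable only once you \emph{also} know the walk avoids worse sites, since then each excursion step costs at least $\xi(\rw_{N,\xi})-X_N^{(J_i+1)} = X_N^{(J_i)}-X_N^{(J_i+1)}$, which Proposition~\ref{th:gap15} makes large; without that input the same failure recurs there as well.
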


\begin{remark}\rm \label{rem:1d}
It is worth stressing that in dimension $d=1$
the set $\cC_{N,\xi}$ reduces to \emph{a single $N$-steps 
trajectory}. In fact, we have $\cC_{N,\xi} = \cS^{(N, \rw_{N,\xi})}$,
where we denote by $\cS^{(N, x)}$, for
$x \in \cB_N$, the set of trajectories $S \in \Omega_S$ such that
\begin{equation*}
	S_i \,:=\, \begin{cases}
	i \cdot \sign(x) & \text{for } 0 \le i \le |x| \\
	x & \text{for } |x| \le i \le N
\end{cases} \,.
\end{equation*}
As stated in Corollary~\ref{th:corcor},
for large $N$ the site $\rw_{N,\xi}$ is either $z_{N,\xi}^{(1)}$ or $z_{N,\xi}^{(2)}$.
Note that $z_{N,\xi}^{(1)}$ and $z_{N,\xi}^{(2)}$ are easily determined, by \eqref{eq:z12}.
In order to decide whether $\rw_{N,\xi} = z_{N,\xi}^{(1)}$ or $\rw_{N,\xi} = z_{N,\xi}^{(2)}$,
by Theorem~\ref{th:theopath} it is sufficient to compare the explicit contributions
of just two trajectories, i.e.,
$ \bP_{N,\xi}(\cS^{(N, z_{N,\xi}^{(1)})})$ and
$ \bP_{N,\xi}(\cS^{(N, z_{N,\xi}^{(2)})})$.
More precisely, setting $\kappa(i) := \p(S_1 = i)$
for $i \in \{\pm 1, 0\}$ (so that $\kappa = \kappa(0)$, cf. \eqref{eq:rw0})
and
$$
	b_{N,\xi}(x) \,:=\,
	e^{\sum_{i=1}^{|x|-1} \xi(i \sign(x))
	+ (N+1-|x|) \xi(x)} \kappa(\sign(x))^{|x|}
	\kappa(0)^{N-|x|} \,,
$$
we have $w_{N,\xi} = z_{N,\xi}^{(1)}$ if $b_{N,\xi}(z_{N,\xi}^{(1)})
> b_{N,\xi}(z_{N,\xi}^{(2)})$ and $w_{N,\xi} = z_{N,\xi}^{(2)}$ otherwise.
Therefore, in dimension $d=1$, we have a very explicit characterization of
the localization point $\rw_{N,\xi}$.
\end{remark}


\subsection{Organization of the paper}
\label{sec:organization}

The paper is organized as follows.
\begin{itemize}
\item In Section~\ref{sec:fields} we gather some basic estimates on the field,
that will be the main tool of our analysis.

\item In Section~\ref{sec:loc2} we prove Theorem~\ref{th:theowz}.


\item In Section~\ref{sec:loc1} we prove Theorem~\ref{th:main}.

\item In Section~\ref{sec:paths} we prove Theorem~\ref{th:theopath}.

\item Finally, the Appendixes contain the proofs of some technical results.
\end{itemize}
In the sequel, the dependence on $\xi$ of various quantities, like
$H_{N,\xi}$, $\rw_{N,\xi}$, $z_{N,\xi}^{(1)}$, etc.,
will be frequently omitted for short.


\medskip

\section{Asymptotic estimates for the environment}
\label{sec:fields}

This section is devoted to the analysis of the almost sure asymptotic properties
of the random potential $\xi$. With the exception of Proposition~\ref{th:gap13},
which plays a fundamental role in our analysis,
the proof of the results of this section
are obtained with the standard techniques of extreme values theory and are
therefore deferred to the Appendices~\ref{ap:field} and~\ref{ap:modfield}.

Before starting, we set up some notation.
We say that a property of the field $\xi$ depending on $N \in \N$
holds \emph{eventually $\bbP$-a.s.} if for $\bbP$-a.e.
$\xi \in \Omega_\xi$ there exists
$N_0 = N_0(\xi) < \infty$
such that the property holds for all $N\ge N_0$.
We recall that $|\cdot|$ denotes the $\ell^1$ norm on $\R^d$
and $\cB_N = \{x \in \Z^d:\, |x| \le N\}$. With some abuse
of notation, the cardinality
of $\cB_N$ will be still denoted by $|\cB_N|$. Note that
$|\cB_N| = c_d N^d + O(N^{d-1})$ as $N\to\infty$,
where $c_d = \int_{\R^d} \ind_{\{|x| \le 1\}} \dd x = 2^d/d!$.

\subsection{Order statistics for the field}
\label{sec:statfield}

The order statistics of the field $\{\xi(x)\}_{x \in \cB_N}$ is the set
of values attained by the field rearranged in decreasing order, and
is denoted by
\begin{equation}\label{eq:ddefim}
        X_N^{(1)} > X_N^{(2)} > \dots > X_N^{(|\cB_N|)} > 1 \,.
\end{equation}
For simplicity, when $t \in [1, |\cB_N|]$ is not an integer we still set
$X_N^{(t)} := X_N^{(\lfloor t \rfloor)}$.
For later convenience, we denote by $x_N^{(k)}$ the point
in $\cB_N$ at which the value $X_N^{(k)}$ is attained,
that is $X_N^{(k)} = \xi(x_N^{(k)})$.
We are going to
exploit heavily the following almost sure estimates.

\begin{lemma} \label{th:ubX1}
For every $\gep > 0$, eventually $\bbP$-a.s.
\begin{equation} \label{eq:ubX1}
	\frac{N^{d/\alpha}}{(\log \log N)^{1/\alpha + \epsilon}} \le
	X_N^{(1)} \le N^{d/\ga} \, (\log N)^{1/\ga + \gep} \,.
\end{equation}
For every $\theta > 1$ and $\epsilon > 0$,
eventually $\bbP$-a.s.
\begin{equation} \label{eq:asXklogN}
	\frac{N^{d/\alpha}}{(\log N)^{\theta/\alpha + \epsilon}} \le
	X_N^{((\log N)^\theta)} 
	\le \frac{N^{d/\alpha}}{(\log N)^{\theta/\alpha - \epsilon}} \,.
\end{equation}
There exists a constant $A > 0$ such that eventually $\bbP$-a.s.
\begin{equation} \label{eq:asbig}
	\sup_{(\log N) \le k \le |\cB_N|} \big( k^{1/\alpha} X_N^{(k)} \big) 
	\le A \, N^{d/\alpha} \,.
\end{equation}
\end{lemma}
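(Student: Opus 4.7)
The plan is to reduce every bound to a standard large-deviation estimate for a Binomial. For any $t \ge 1$, the random variable $|\{x\in\cB_N:\xi(x)>t\}|$ is $\mathrm{Bin}(|\cB_N|,t^{-\alpha})$, so
\begin{equation*}
	\bbP\bigl(X_N^{(k)} > t\bigr) \,=\, \bbP\bigl(\mathrm{Bin}(|\cB_N|, t^{-\alpha}) \ge k\bigr), \qquad \bbP\bigl(X_N^{(k)} \le t\bigr) \,=\, \bbP\bigl(\mathrm{Bin}(|\cB_N|, t^{-\alpha}) \le k-1\bigr),
\end{equation*}
and, using $|\cB_N| \sim c_d N^d$, each of the three claims reduces to a Binomial tail bound followed by Borel--Cantelli (along a dyadic subsequence when the decay is subpolynomial in $N$, together with the monotonicity $N \mapsto X_N^{(1)}$).

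For the upper bound in \eqref{eq:ubX1}, the union bound gives $\bbP(X_N^{(1)} > N^{d/\alpha}(\log N)^{1/\alpha+\epsilon}) = O((\log N)^{-(1+\epsilon\alpha)})$: not summable in $N$, but summable along $N_j = 2^j$ ($O(j^{-(1+\epsilon\alpha)})$). Borel--Cantelli plus monotonicity extends the estimate to all integers, the constants lost in interpolation being absorbed into the $\epsilon$-slack. For the lower bound I take $b_N := N^{d/\alpha}(\log\log N)^{-1/\alpha-\epsilon}$ and bound $\bbP(X_N^{(1)} \le b_N) = (1-b_N^{-\alpha})^{|\cB_N|} \le \exp\bigl(-\tfrac{c_d}{2}(\log\log N)^{1+\epsilon\alpha}\bigr)$, which along $N_j = 2^j$ equals $\exp(-c(\log j)^{1+\epsilon\alpha})$ and is therefore summable (its $j$-th term decays faster than any polynomial).

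For \eqref{eq:asXklogN}, set $k_N := (\log N)^\theta$ and $t_N^\pm := N^{d/\alpha}(\log N)^{-(\theta\mp\epsilon)/\alpha}$: the Binomial parameter $|\cB_N|(t_N^\pm)^{-\alpha} \sim c_d(\log N)^{\theta\mp\epsilon}$ differs from $k_N$ by a $(\log N)^{\pm\epsilon}$-factor, so a Chernoff bound gives both $\bbP(X_N^{(k_N)} > t_N^+)$ and $\bbP(X_N^{(k_N)} < t_N^-)$ bounded by $\exp(-c(\log N)^\theta \log\log N)$, which is directly summable in $N$. For \eqref{eq:asbig}, take $t_k := AN^{d/\alpha}k^{-1/\alpha}$: the Binomial mean becomes $c_d k/A^\alpha$, so for $A$ large enough Chernoff yields $\bbP(X_N^{(k)} > t_k) \le e^{-I(A)k}$ with $I(A) \to \infty$ as $A \to \infty$. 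A union bound over $k \ge \log N$ gives a geometric series of order $e^{-I(A)\log N} = N^{-I(A)}$, which becomes summable in $N$ once $A$ is chosen large enough, and Borel--Cantelli concludes.

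The main obstacle is the lower bound on $X_N^{(1)}$: since $\bbP(X_N^{(1)} \le C N^{d/\alpha})$ is bounded away from zero, no polynomial decay in $N$ is available and the subsequence argument is indispensable. Care is needed to verify that the constants arising when passing from $N_j$ to general $N$ via monotonicity are cleanly absorbed into the $\epsilon$-slack in the exponent. All other estimates enjoy exponential or super-polynomial decay in $N$, making the direct Borel--Cantelli step painless.
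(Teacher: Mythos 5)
Your proposal is correct and takes essentially the same route as the paper: the paper also reduces to the Binomial representation $\bbP(X_N^{(k)} \le t) = \bbP(\mathrm{Bin}(|\cB_N|, t^{-\alpha}) \le k-1)$, proves explicit Poisson-type tail bounds on that Binomial (its inequalities \eqref{eq:XN1t}--\eqref{eq:XNkt} are the Chernoff estimates you invoke, in the regime $np$ bounded or slowly growing), uses the dyadic subsequence $N_j = 2^j$ with the monotonicity of $X_N^{(1)}$ for \eqref{eq:ubX1}, and handles \eqref{eq:asXklogN}--\eqref{eq:asbig} by direct Borel--Cantelli once the decay is at least polynomial in $N$. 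Your framing — everything is a Binomial large-deviation bound plus Borel--Cantelli, with a subsequence only where the decay is subpolynomial — is a slightly cleaner way to organize the same proof.
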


The proof of Lemma~\ref{th:ubX1} is given in Appendix~\ref{sec:ubX1proof}.
For completeness, we point out that
$X_N^{(1)}/(c_d N^{d/\ga})$ converges in distribution
as $N\to\infty$ toward the law $\mu$ on $(0,\infty)$
with $\mu((0,x]) = \exp(-x^{-\ga})$, called
Fr\'echet law of shape parameter $\ga$, as one can easily prove.

Next we give a lower bound on the gaps
$X_N^{(k)} - X_N^{(k+1)}$ for moderate values
of $k$.

\begin{proposition} \label{th:gap15}
For every $\theta>0$ there exists a constant $\gamma > 0$ such that eventually $\bbP$-a.s.
\begin{equation} \label{eq:gap15}
	\inf_{1 \le k \le (\log N)^\theta}
	\left( X_N^{(k)} - X_N^{(k+1)} \right)
	\ge \frac{N^{d/\alpha}}{(\log N)^\gamma} \,.
\end{equation}
\end{proposition}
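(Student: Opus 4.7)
My plan is to obtain a good tail estimate for the minimum gap at each fixed $N$ from the explicit conditional distribution of the Pareto order statistics, and then lift it to an almost-sure eventual statement through a dyadic-subsequence Borel--Cantelli argument combined with an exchangeability-based interpolation step.

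For the single-$N$ estimate, the key observation is that given $X_N^{(k+1)}=v$, the $k$ values above $v$ are i.i.d.\ Pareto conditioned to exceed $v$, so
\[
\bbP\big(X_N^{(k)}-X_N^{(k+1)}<\delta\,\big|\,X_N^{(k+1)}=v\big)\,=\,1-\Big(\tfrac{v}{v+\delta}\Big)^{\alpha k}\,\le\,\alpha k\,\delta/v.
\]
Integrating and writing $n=|\cB_N|\asymp N^d$ gives $\bbP(X_N^{(k)}-X_N^{(k+1)}<\delta)\le \alpha k\,\delta\,\bbE[1/X_N^{(k+1)}]$. Using the classical representation $X_N^{(k+1)}\stackrel{d}{=}(E_{n+1}/E_{k+1})^{1/\alpha}$ with $E_j=\mathcal E_1+\cdots+\mathcal E_j$, $\mathcal E_i$ i.i.d.\ $\mathrm{Exp}(1)$, a direct beta-gamma computation yields $\bbE[1/X_N^{(k+1)}]\le C(k+1)^{1/\alpha}/N^{d/\alpha}$. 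A union bound over $k\le(\log N)^\theta$ with $\delta=N^{d/\alpha}/(\log N)^\gamma$ then gives
\[
\bbP\Big(\min_{1\le k\le(\log N)^\theta}\!\big(X_N^{(k)}-X_N^{(k+1)}\big)<N^{d/\alpha}/(\log N)^\gamma\Big)\,\le\,C(\log N)^{\theta(2+1/\alpha)-\gamma}.
\]

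Since this bound is only polylogarithmic in $N$ and hence not directly summable, I would then apply Borel--Cantelli along the dyadic subsequence $N_j:=2^j$: choosing $\gamma>\theta(2+1/\alpha)+1$ makes the bound at $N_j$ comparable to $j^{-(1+\epsilon)}$, so the gap property holds $\bbP$-a.s.\ eventually along $N_j$. For intermediate $N\in[N_j,N_{j+1}]$, I would show that, with complementary probability summable in $N$, the top $(\log N)^\theta$ positions of $\cB_N$ are contained in the top $K^\ast:=2^{d+3}(\log N_{j+1})^\theta$ positions of $\cB_{N_{j+1}}$. Indeed, by exchangeability, conditionally on the ordered values in $\cB_{N_{j+1}}$ the positions of the top $K^\ast$ form a uniform $K^\ast$-subset of $\cB_{N_{j+1}}$; their intersection with $\cB_N$ is therefore hypergeometric with mean $\ge K^\ast|\cB_N|/|\cB_{N_{j+1}}|\ge 2(\log N)^\theta$, and hypergeometric concentration gives probability $\le e^{-c(\log N)^\theta}$ that fewer than $(\log N)^\theta$ of them lie in $\cB_N$, which is summable in $N$. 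Whenever the inclusion holds, each consecutive gap among the top $(\log N)^\theta+1$ values in $\cB_N$ is a sum of consecutive gaps in $\cB_{N_{j+1}}$ among its top $K^\ast+1$ values, so the lower bound $N_{j+1}^{d/\alpha}/(\log N_{j+1})^\gamma$ obtained at $N_{j+1}$ (applied with $K^\ast$ in place of $(\log N_{j+1})^\theta$, which amounts to enlarging $\theta$ by an arbitrarily small amount) transfers to $N$ up to a constant factor that is absorbed by replacing $\gamma$ by $\gamma+1$.

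The principal obstacle is precisely this mismatch between the polylogarithmic single-$N$ decay and the need for an almost-sure statement: one is forced to sparsify to a dyadic subsequence and then control how the ranking of the top values can reshuffle as the ball grows within a dyadic block, which is where the exchangeability/hypergeometric argument enters. All other steps reduce to routine extreme-value computations.
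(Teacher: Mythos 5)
Your argument is correct, but it follows a genuinely different route from the paper at both of the key stages. For the single-$N$ tail estimate, you condition on $X_N^{(k+1)}$ and then use the classical beta--gamma representation $(X_N^{(k+1)})^{-\alpha}\stackrel{d}{=}E_{k+1}/E_{n+1}$ to compute $\bbE[1/X_N^{(k+1)}]$ directly; the paper instead sums over the possible locations of the top $k$ sites (Lemma~\ref{le:lem}, equation~\eqref{eq:Xk}) to get a purely combinatorial bound $\bbP(X_N^{(k+1)}\ge(1-\delta)X_N^{(k)})\le ck\delta$. These give the same order of magnitude, but your closed-form moment computation is arguably more transparent. For the interpolation, you take the dense dyadic subsequence $N_j=2^j$ and argue via exchangeability that, with probability failing only like $e^{-c(\log N)^\theta}$, the top $(\log N)^\theta$ sites in $\cB_N$ are among the top $O((\log N_{j+1})^\theta)$ sites in $\cB_{N_{j+1}}$, so that each gap in $\cB_N$ dominates a gap in $\cB_{N_{j+1}}$. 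The paper instead takes the much sparser subsequence $N_n=\lfloor e^{n^r}\rfloor$ (with $r$ small) and, rather than tracking positions, decomposes the failure event $\tilde\cV_n^c$ according to whether the two sites realizing the offending pair $X_m^{(k)},X_m^{(k+1)}$ lie in $\cB_{N_n}$ or in the annulus $\cB_{N_{n+1}}\setminus\cB_{N_n}$, controlling the three resulting events $\cW_n^{(1)},\cW_n^{(2)},\cW_n^{(3)}$ separately. Your hypergeometric step is conceptually cleaner, though it (like the paper's own argument) silently invokes the harmless reduction to $\theta>1$: make this explicit, since $e^{-c(\log N)^\theta}$ is only summable in $N$ when $\theta>1$, and the reduction is justified because the claimed inequality for a larger $\theta$ implies it for any smaller one. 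Otherwise the estimates all check out: the conditional tail identity, the Stirling asymptotics giving $\bbE[1/X_N^{(k+1)}]\lesssim (k+1)^{1/\alpha}N^{-d/\alpha}$, the union bound producing the $(\log N)^{\theta(2+1/\alpha)-\gamma}$ exponent, and the absorption of the constant $2^{d+3}$ in $K^*$ by a slight enlargement of $\theta$ all work as written.
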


\noindent
The proof of Proposition~\ref{th:gap15}
is given in Appendix~\ref{sec:gap15proof}.

\smallskip
\subsection{Order statistics for the modified field}

An important role is played by the
modified field $\{\psi_N(x)\}_{x \in \cB_N}$, defined by
\begin{equation} \label{eq:psi}
        \psi_N(x) := \left( 1 - \frac{|x|}{N+1}\right) \xi(x) \,.
\end{equation}
The motivation is the following: for any given point
$x \in \cB_N$, a random walk trajectory $(S_0, S_1, \ldots, S_N)$ that
goes to $x$ in the minimal number of steps
and sticks in $x$ afterwards has an energetic contribution equal to
$\sum_{i=1}^{|x|-1} \xi(S_i) + (N+1) \psi_N(x)$ (recall \eqref{eq:model}).

The order statistics of the modified field $\{\psi_N(x)\}_{x \in \cB_N}$ will be denoted by
\begin{equation*}
        Z_N^{(1)} > Z_N^{(2)} > \dots > Z_N^{|\cB_N|} \,,
\end{equation*}
and we let $z_N^{(k)}$ be the point in $\cB_N$ at which $\psi_N$ attains $Z_N^{(k)}$,
that is $\psi_N(z_N^{(k)})=Z_N^{(k)}$.
A simple but important observation is that
$Z_N^{(k)}$ is increasing in $N$, for every fixed $k\in\N$, since
$\psi_N(x)$ is increasing in $N$ for fixed $x$.
Also note that $Z_N^{(k)} \le X_N^{(k)}$, because $\psi_N(x) \le \xi(x)$.

\smallskip

%

Our attention will be mainly devoted to $Z_N^{(1)}$ and $Z_N^{(2)}$,
whose almost sure asymptotic behaviors are analogous to that of $X_N^{(1)}$,
cf. \eqref{eq:ubX1}.

\begin{lemma}\label{cori}
For every $\gep > 0$, eventually $\bbP$-a.s.
\begin{equation} \label{eq:lbZ1}
	\frac{N^{d/\alpha}}{(\log \log N)^{1/\alpha + \epsilon}} \le
	Z_N^{(2)} \le Z_N^{(1)} \le N^{d/\ga} \, (\log N)^{1/\ga + \gep} \,.
\end{equation}
\end{lemma}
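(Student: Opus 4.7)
The upper bound $Z_N^{(1)}\le N^{d/\alpha}(\log N)^{1/\alpha+\epsilon}$ is immediate from the pointwise inequality $\psi_N(x)\le \xi(x)$, which gives $Z_N^{(1)}\le X_N^{(1)}$, combined with the upper bound in \eqref{eq:ubX1} from Lemma~\ref{th:ubX1}. Since $Z_N^{(2)}\le Z_N^{(1)}$, the work is in the \emph{lower bound} on $Z_N^{(2)}$, and my plan is to reduce this to a standard extreme-value estimate on a half-sized ball.

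The key geometric observation is that for every $x\in\cB_{\lfloor N/2\rfloor}$ one has $1-|x|/(N+1)\ge 1/2$, so $\psi_N(x)\ge \tfrac12\xi(x)$. Writing $Y_N^{(1)}>Y_N^{(2)}$ for the two largest values of $\xi$ on $\cB_{\lfloor N/2\rfloor}$, this gives $Z_N^{(2)}\ge \tfrac12 Y_N^{(2)}$, and it therefore suffices to show that $Y_N^{(2)}\ge c\,N^{d/\alpha}/(\log\log N)^{1/\alpha+\epsilon/2}$ eventually $\bbP$-a.s. Setting $m_N:=|\cB_{\lfloor N/2\rfloor}|=c_d'N^d(1+o(1))$ and using that the $\xi(x)$ are i.i.d.\ with $\bbP(\xi>a)=a^{-\alpha}$, the standard formula
\[
\bbP\big(Y_N^{(2)}\le a\big)\,=\,(1-a^{-\alpha})^{m_N}+m_N\,a^{-\alpha}(1-a^{-\alpha})^{m_N-1}
\]
applied at $a=a_N:=m_N^{1/\alpha}/(\log\log N)^{1/\alpha+\epsilon/2}$, for which $s_N:=m_Na_N^{-\alpha}=(\log\log N)^{1+\alpha\epsilon/2}$, yields
\[
\bbP\big(Y_N^{(2)}\le a_N\big)\,\le\, 2\,(\log\log N)^{1+\alpha\epsilon/2}\exp\!\big(-\tfrac12(\log\log N)^{1+\alpha\epsilon/2}\big)
\]
for all $N$ large enough.

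Evaluating at the geometric subsequence $N_k:=2^k$ the right-hand side is of order $(\log k)^{1+\alpha\epsilon/2}\exp(-\tfrac12(\log k)^{1+\alpha\epsilon/2})$, which is summable in $k$ (since $(\log k)^{\alpha\epsilon/2}\to\infty$, this decays faster than any power of $1/k$). Borel--Cantelli then gives, $\bbP$-a.s.\ and for all large $k$,
\[
Z_{N_k}^{(2)}\,\ge\,\tfrac12 Y_{N_k}^{(2)}\,\ge\,\frac{c\, N_k^{d/\alpha}}{(\log\log N_k)^{1/\alpha+\epsilon/2}}.
\]
To pass from the subsequence to all $N$, I use the monotonicity of $Z_N^{(2)}$ in $N$ noted in the text (a consequence of $\psi_N(x)$ being non-decreasing in $N$ for each fixed $x\in\Z^d$): for $N_k\le N<N_{k+1}$ one has $Z_N^{(2)}\ge Z_{N_k}^{(2)}$, and since $N\le 2N_k$ the factor $N^{d/\alpha}$ changes only by the constant $2^{d/\alpha}$. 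Both this constant and the constant $c$ are easily absorbed into the exponent by replacing $\epsilon/2$ by $\epsilon$, using $(\log\log N)^{\epsilon/2}\to\infty$.

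The only substantive step is the extreme-value probability estimate; everything else is bookkeeping. Morally, the content of the lemma is that the first two order statistics of $\sim N^d$ i.i.d.\ Pareto variables have the same size $N^{d/\alpha}$ up to logarithmic corrections---a well-known feature of heavy-tailed extremes.
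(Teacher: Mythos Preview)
Your proof is correct. The key inequality $Z_N^{(2)}\ge\tfrac12 Y_N^{(2)}$ follows cleanly from the fact that the two distinct maximizers $y_1,y_2$ of $\xi$ on $\cB_{\lfloor N/2\rfloor}$ give two distinct points of $\cB_N$ with $\psi_N$-value at least $\tfrac12 Y_N^{(2)}$; the binomial tail computation and the Borel--Cantelli/monotonicity endgame are routine.

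Your route differs from the paper's. The paper does not restrict to a sub-ball; instead it proves a direct tail estimate $\bbP(Z_N^{(2)}\le N^{d/\alpha}t)\le c_1 e^{-c_2/t^{\alpha}}$ by decomposing over the location $x$ of $z_N^{(1)}$, writing the event in terms of $O_x:=\sup_{z\in\cB_N\setminus\{x\}}\psi_N(z)$, bounding $\bbE[O_x^{-\alpha}\ind_{\{O_x\le N^{d/\alpha}t\}}]$, and finishing with a Riemann-sum computation of $N^{-d}\sum_{z\in\cB_N}(1-|z|/(N{+}1))^{\alpha}$. Your argument trades the modified field $\psi_N$ for the raw field $\xi$ on the half-ball at the cost of a harmless factor $\tfrac12$, which lets you invoke the standard i.i.d.\ second-order-statistic formula and bypass that computation entirely; this is shorter and more transparent. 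The paper's auxiliary lemma gives a slightly sharper and more portable tail bound (valid for all $t$), but since it is not reused elsewhere, nothing is lost. Both proofs conclude identically: Borel--Cantelli along $N_k=2^k$ and monotonicity of $N\mapsto Z_N^{(2)}$.
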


The proof is given in Appendix~\ref{sec:cori}.
Note that only the first inequality needs to be proved,
thanks to \eqref{eq:ubX1} and to the fact that,
plainly, $Z_N^{(2)} \le Z_N^{(1)} \le X_N^{(1)}$.
 
\smallskip

Next we focus on the gaps between $Z_N^{(1)}, Z_N^{(2)}$ and $Z_N^{(3)}$.
The main technical tool is given by the following easy estimates,
proved in Appendix~\ref{sec:lemZproof}.

\begin{lemma} \label{th:lemZ}
There is a constant $c$ such that for all $N \in \N$ and $\gd \in (0,1)$
\begin{gather} \label{eq:gap123}
        \bbP (Z_N^{(2)} > (1-\gd)Z_N^{(1)}) \le c \,\gd \,,
        \qquad
        \bbP (Z_N^{(3)} > (1-\gd)Z_N^{(1)}) \le c \,\gd^2 \,.
\end{gather}
\end{lemma}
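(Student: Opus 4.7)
The plan is to reparametrize each modified field value via a uniform random variable so the whole problem reduces to a short computation about order statistics of independent uniforms. Writing $\alpha_x := 1 - |x|/(N+1)$ and using that $U_x := \xi(x)^{-\alpha}$ is uniform on $(0,1]$, I set $V_x := U_x/\alpha_x^\alpha$. Then $\psi_N(x) = V_x^{-1/\alpha}$ and the family $\{V_x\}_{x \in \cB_N}$ is independent, with $V_x$ uniform on $(0,\,1/\alpha_x^\alpha]$ (density $\alpha_x^\alpha$). Decreasing order statistics of $\psi_N$ become increasing order statistics of $V$: if $V^{(1)} < V^{(2)} < \cdots$ denote the latter, then $Z_N^{(k)} = (V^{(k)})^{-1/\alpha}$. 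Setting $\eta := (1-\delta)^{-\alpha}$, the two inequalities of the lemma are equivalent to $\bbP(V^{(2)} < \eta V^{(1)}) \le c\delta$ and $\bbP(V^{(3)} < \eta V^{(1)}) \le c\delta^2$. A useful observation is that $\alpha_0 = 1$ forces $V_0 \le 1$, so all joint densities below are effectively supported in $v \le 1$.

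For the first bound I enumerate over which pair $(x,y)$ realizes $(V^{(1)}, V^{(2)})$:
\[
\bbP\bigl(V^{(2)} < \eta V^{(1)}\bigr) \,=\, \sum_{x \ne y \in \cB_N} \int_0^1 \int_{v/\eta}^{v} \alpha_x^\alpha \alpha_y^\alpha \prod_{z \ne x,y}(1 - v\alpha_z^\alpha)\, du\, dv.
\]
Using $1 - t \le e^{-t}$ together with $\alpha_x^\alpha, \alpha_y^\alpha \le 1$, the product is bounded by $e^{2} e^{-v C_N}$, where $C_N := \sum_{z \in \cB_N} \alpha_z^\alpha$. Since $\sum_{x \ne y}\alpha_x^\alpha \alpha_y^\alpha \le C_N^2$, and since the $u$-integral gives $v(1-1/\eta)$, the calculation collapses to
\[
\bbP\bigl(V^{(2)} < \eta V^{(1)}\bigr) \,\le\, e^{2}(1 - 1/\eta)\, C_N^2 \int_0^\infty v\, e^{-v C_N}\, dv \,=\, e^{2}\bigl(1 - (1-\delta)^\alpha\bigr),
\]
which is $\le c\delta$ by the elementary inequality $1 - (1-\delta)^\alpha \le c_\alpha \delta$ for all $\delta \in (0,1)$.

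For the second bound I enumerate over triples $(x,y,z)$ in the same way. The three-dimensional region $\{0 < u < v < w < \eta u\}$ at fixed $w$ has area $\tfrac{1}{2} w^2 (1-1/\eta)^2$ in the $(u,v)$-plane, and absorbing the $C_N^3$ factor coming from $\sum_{x,y,z}\alpha_x^\alpha\alpha_y^\alpha\alpha_z^\alpha$ against the $2/C_N^3$ produced by $\int_0^\infty w^2 e^{-w C_N}\, dw$ yields a bound proportional to $(1 - (1-\delta)^\alpha)^2 \le c\delta^2$. The only conceptual subtlety lies in the fact that the $V_x$ do not share a common support (because $\alpha_x$ is very small near the boundary of $\cB_N$), but this is cleanly handled by the observation that $\prod_z (1-v\alpha_z^\alpha)$ vanishes for $v > 1$ since $V_0 \le 1$, so integration is effectively confined to $v \in (0,1)$ where all factors lie in $[0,1]$ and the exponential tail estimate applies. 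Once this is understood, the entire argument is a couple of lines of calculus, with the critical cancellation $C_N^k \cdot C_N^{-k} = 1$ producing a bound independent of $N$.
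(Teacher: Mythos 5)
Your proof is correct. Both your argument and the paper's (Lemma B.1 in Appendix B) rest on the same change of variables, reducing the modified-field values to independent but non-identically-distributed uniform random variables. The executions differ. The paper sums over the location of the subset $A\subseteq\cB_N$ containing the $k-1$ largest values of $\psi_N$, conditions on $L_{A^c}=Z_N^{(k)}$, uses the monotonicity estimate $J((1-\delta)^\alpha t^\alpha)\ge(1-\delta)^\alpha J(t^\alpha)$ on the uniform distribution function $J$, and finishes with a telescoping identity obtained by setting $\delta=1$; this yields the sharp bound $\bbP(Z_N^{(k)}\ge(1-\delta)Z_N^{(1)})\le(1-(1-\delta)^\alpha)^{k-1}$ with constant $1$, valid for every $k$. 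You instead enumerate ordered tuples of sites achieving $V^{(1)},\dots,V^{(k)}$, integrate explicitly in the continuous variables using the crude bounds $\prod(1-v\alpha_z^\alpha)\le e^{k}e^{-vC_N}$ and $\sum\alpha_{x_1}^\alpha\cdots\alpha_{x_k}^\alpha\le C_N^k$, and let the cancellation $C_N^k\cdot C_N^{-k}=1$ produce an $N$-independent bound; this costs a factor $e^{k}$ but gives the same dependence $(1-(1-\delta)^\alpha)^{k-1}$ in $\delta$, which is all the lemma needs. One small point of care: your remark that $\prod_z(1-v\alpha_z^\alpha)$ ``vanishes'' for $v>1$ is not literally true (the factor for $z=0$ becomes negative rather than zero); the clean statement is that the density of $V^{(k)}$ is supported on $(0,1]$ because $V_0\le1$, so the $v$- (resp. $w$-) integral is legitimately restricted to $(0,1]$, where every factor $1-v\alpha_z^\alpha$ lies in $[0,1)$ and the exponential bound applies. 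Your outer integral is already written as $\int_0^1$, so the argument goes through as stated.
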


\noindent
As a consequence, we have the following result, which will be crucial in the sequel.

\begin{proposition} \label{th:gap13}
For every $d$ and $\alpha$, there exists $\beta \in (1,\infty)$
such that
\begin{equation} \label{eq:gap13}
        Z_N^{(1)} - Z_N^{(3)} \ge \frac{N^{d/\ga}}{(\log N)^\beta} \,,
	\qquad \text{eventually $\bbP$-a.s.}\,.
\end{equation}
\end{proposition}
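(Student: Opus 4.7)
The plan is to combine Lemma~\ref{th:lemZ} with a Borel--Cantelli argument along a carefully chosen subsequence, and then extend to every $N$ by a deterministic interpolation. Fix large parameters $s,\gamma>0$ (to be tuned below) and define the subsequence $N_0:=1$, $N_{k+1}:=\lceil N_k(1+(\log N_k)^{-s})\rceil$, so that $\log N_{k+1}-\log N_k\asymp(\log N_k)^{-s}$ and hence $\log N_k\asymp k^{1/(s+1)}$. Setting $\delta_k:=(\log N_k)^{-\gamma}$, Lemma~\ref{th:lemZ} gives
\[
\bbP\bigl(Z_{N_k}^{(3)}>(1-\delta_k)Z_{N_k}^{(1)}\bigr)\le c\,\delta_k^2 \asymp c\,k^{-2\gamma/(s+1)},
\]
which is summable as soon as $2\gamma>s+1$. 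Borel--Cantelli then gives $Z_{N_k}^{(1)}-Z_{N_k}^{(3)}\ge\delta_k Z_{N_k}^{(1)}$ eventually $\bbP$-a.s., and combining with the lower bound $Z_{N_k}^{(1)}\ge N_k^{d/\alpha}/(\log\log N_k)^{1/\alpha+\epsilon}$ from Lemma~\ref{cori} yields
\[
Z_{N_k}^{(1)}-Z_{N_k}^{(3)}\ge\frac{N_k^{d/\alpha}}{(\log N_k)^{\gamma+1}}\qquad\text{eventually $\bbP$-a.s.}
\]

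To pass from the subsequence to arbitrary $N\in[N_k,N_{k+1}]$, observe that $\psi_N\ge\psi_{N_k}$ on $\cB_{N_k}$ and $\psi_N\le\psi_{N_{k+1}}$ on $\cB_N\subseteq\cB_{N_{k+1}}$, so $Z_N^{(1)}\ge Z_{N_k}^{(1)}$ and $Z_N^{(3)}\le Z_{N_{k+1}}^{(3)}$. Hence
\[
Z_N^{(1)}-Z_N^{(3)}\ge\bigl(Z_{N_{k+1}}^{(1)}-Z_{N_{k+1}}^{(3)}\bigr)-\bigl(Z_{N_{k+1}}^{(1)}-Z_{N_k}^{(1)}\bigr).
\]
To bound the second difference, let $z^*:=z_{N_{k+1}}^{(1)}$. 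If $z^*\in\cB_{N_k}$, a direct computation using $\xi(z^*)\le X_{N_{k+1}}^{(1)}$ and the upper bound of Lemma~\ref{th:ubX1} gives $\psi_{N_{k+1}}(z^*)-\psi_{N_k}(z^*)\le X_{N_{k+1}}^{(1)}(N_{k+1}-N_k)/(N_{k+1}+1)\le CN_k^{d/\alpha}(\log N_k)^{1/\alpha+\epsilon-s}$. If instead $z^*\in\cB_{N_{k+1}}\setminus\cB_{N_k}$, then $|z^*|\ge N_k+1$ forces $1-|z^*|/(N_{k+1}+1)\le 2(\log N_k)^{-s}$, and the same upper bound on $X_{N_{k+1}}^{(1)}$ gives $\psi_{N_{k+1}}(z^*)\le CN_k^{d/\alpha}(\log N_k)^{1/\alpha+\epsilon-s}$; for $s>1/\alpha+\epsilon$ this is eventually smaller than $Z_{N_k}^{(1)}$, contradicting the monotonicity $Z_{N_{k+1}}^{(1)}\ge Z_{N_k}^{(1)}$, so this second case is impossible for large $k$. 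Therefore $Z_{N_{k+1}}^{(1)}-Z_{N_k}^{(1)}\le CN_k^{d/\alpha}(\log N_k)^{1/\alpha+\epsilon-s}$.

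Finally, choose $\gamma,s$ satisfying $\gamma>(s+1)/2$ (Borel--Cantelli summability) together with $s>\gamma+1+1/\alpha+\epsilon$ (so that the interpolation error is asymptotically smaller than the Borel--Cantelli gain). These constraints are jointly feasible as they reduce to $\gamma>2+1/\alpha+\epsilon$, and any such $\gamma$ works. Plugging the bound on $Z_{N_{k+1}}^{(1)}-Z_{N_k}^{(1)}$ into the displayed inequality and using $N_{k+1}/N_k\to 1$ to absorb constants, one concludes $Z_N^{(1)}-Z_N^{(3)}\ge N^{d/\alpha}/(\log N)^\beta$ eventually $\bbP$-a.s. for every $\beta>\gamma+2$. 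The main technical obstacle is the tension between the two requirements on the subsequence: a geometric subsequence is too sparse (then $Z_{N_{k+1}}^{(1)}-Z_{N_k}^{(1)}$ may dominate the target gap $r_N$), while a very fine subsequence fails the Borel--Cantelli summability. The scaling $\log N_{k+1}-\log N_k\asymp(\log N_k)^{-s}$ with $s,\gamma$ both large resolves this tension.
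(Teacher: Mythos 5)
Your proof is correct and follows essentially the same strategy as the paper: apply the second bound of Lemma~\ref{th:lemZ} along a subsequence $N_k$ with $\log N_{k+1}-\log N_k\asymp(\log N_k)^{-s}$, use Borel--Cantelli plus the lower bound on $Z_{N_k}^{(1)}$ from Lemma~\ref{cori}, and then interpolate to all $N$ via the monotonicity of $Z_N^{(i)}$ in $N$; your subsequence is just the paper's $N_k=\lfloor e^{k^r}\rfloor$ reparametrized with $s=1/r-1$, and your parameter constraints $2\gamma>s+1$ and $s>\gamma+1+1/\alpha+\epsilon$ are the paper's $2r\gamma>1$ and $1/r-1/\alpha-2>\gamma+1$. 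The only genuine variation is the bound on the increment $Z_{N_{k+1}}^{(1)}-Z_{N_k}^{(1)}$: the paper telescopes over unit steps using $Z_{n+1}^{(1)}-Z_n^{(1)}\le\xi(z_{n+1}^{(1)})/n$, while you examine the maximizer $z^*=z_{N_{k+1}}^{(1)}$ directly and split on whether $z^*\in\cB_{N_k}$, ruling out the boundary case by comparing scales; both give the same order of estimate.
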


Although we do not use this fact explicitly,
it is worth stressing that the gap $Z_N^{(1)} - Z_N^{(2)}$
can be as small as $N^{d/\alpha - 1}$ (up to logarithmic corrections),
hence much smaller than the right
hand side of \eqref{eq:gap13}, cf. Appendix~\ref{sec:gap12?}.
This is the reason behind the fact that localization at the two points
$\{z_{N,\xi}^{(1)}, z_{N,\xi}^{(2)}\}$ can be proved quite directly,
cf. Section~\ref{sec:loc2},
whereas localization at a single point 
$\rw_{N,\xi} \in \{z_{N,\xi}^{(1)}, z_{N,\xi}^{(2)}\}$
is harder to obtain, cf. Section~\ref{sec:loc1}.
Furthermore, one may have $\rw_{N,\xi} \ne z_{N,\xi}^{(1)}$
precisely when the gap $Z_N^{(1)} - Z_N^{(2)}$ is small, cf.
Appendix~\ref{ap:zn2}.

\smallskip

\begin{proof}[Proof of Proposition~\ref{th:gap13}]
For $r \in (0,1)$ (that will be fixed later),
we set $N_k := \lfloor e^{k^r} \rfloor$, for $k\in\N$.
By the second relation in \eqref{eq:gap123}, for $\gamma > 0$
(to be fixed later) we have
\begin{equation*}
        \sum_{k\in\N} \bbP \left( Z_{N_k}^{(1)} - Z_{N_k}^{(3)} \le
        \, \frac{1}{(\log N_k)^\gamma} Z_{N_k}^{(1)} \right)
        \le c_1 \, \sum_{k\in\N} \frac{1}{(\log N_k)^{2\gamma}}
        \le (const.) \sum_{k\in\N} \frac{1}{k^{2r\gamma}} < \infty \,,
\end{equation*}
provided $2r\gamma > 1$. Therefore, by the Borel-Cantelli lemma
and \eqref{eq:lbZ1}, eventually (in $k$) $\bbP$-a.s.
\begin{equation} \label{eq:already}
        Z_{N_k}^{(1)} - Z_{N_k}^{(3)} \ge
        \frac{(N_k)^{d/\ga}}{(\log N_k)^{\gamma + 1}} \,.
\end{equation}

Now for a generic $N \in \N$, let $k \in \N$ be
such that $N_{k-1} \le N < N_{k}$. We can write
\begin{equation*}
        Z_{N}^{(1)} - Z_{N}^{(3)} =
        \big( Z_{N}^{(1)} - Z_{N_{k}}^{(1)} \big)
        + \big( Z_{N_{k}}^{(1)} - Z_{N_{k}}^{(3)} \big)
        + \big( Z_{N_{k}}^{(3)} - Z_{N}^{(3)} \big) \,.
\end{equation*}
We already observed that
$Z_N^{(k)}$ is increasing in $N$,
therefore the third term in the right hand side is non-negative
and can be neglected. From \eqref{eq:already} we then get
for large $N$
\begin{equation} \label{eq:wowo}
        Z_{N}^{(1)} - Z_{N}^{(3)} \ge \frac{(N_k)^{d/\ga}}{(\log N_k)^{\gamma + 1}}
        - \big( Z_{N_{k}}^{(1)} - Z_{N}^{(1)} \big)
        \ge \frac{N^{d/\ga}}{2\,(\log N)^{\gamma + 1}}
        - \big( Z_{N_{k}}^{(1)} - Z_{N}^{(1)} \big) \,,
\end{equation}
because $N_k \ge N$ and $N_k \le 2N$ for large $N$
(note that $N_k/N_{k-1} \to 1$ as $k\to\infty$).

It remains to estimate $Z_{N_{k}}^{(1)} - Z_{N}^{(1)}$. Observe that
$Z_{n}^{(1)} = \psi_{n}(z_{n}^{(1)}) \ge \psi_{n}(z_{n+1}^{(1)})$,
because $Z_{n}^{(1)}$ is the maximum of $\psi_{n}$. Therefore
we obtain the estimate
\begin{equation*}
\begin{split}
        Z_{n+1}^{(1)} - Z_{n}^{(1)} & = \psi_{n+1}(z_{n+1}^{(1)})
        - \psi_{n}(z_{n}^{(1)}) \le \psi_{n+1}(z_{n+1}^{(1)})
        - \psi_{n}(z_{n+1}^{(1)}) \\
        & = \frac{|z_{n+1}^{(1)}| \,
        \xi(z_{n+1}^{(1)})}{(n+1)(n+2)} \le
        \frac{\xi(z_{n+1}^{(1)})}{n} \,,
\end{split}
\end{equation*}
which yields
\begin{equation} \label{eq:wowoZ}
        Z_{N_{k}}^{(1)} - Z_{N}^{(1)} =
        \sum_{n=N}^{N_k-1} \big( Z_{n+1}^{(1)} - Z_{n}^{(1)} \big)
        \le \frac{N_k-N_{k-1}}{N_{k-1}}\, \xi(z_{N_k}^{(1)})
        \le \frac{N_k-N_{k-1}}{N_{k-1}}\, X_{N_k}^{(1)} \,.
\end{equation}
Observe that as $k \to \infty$
\begin{equation}\label{eq:nkineg}
        \frac{e^{k^r} - e^{(k-1)^r}}{e^{(k-1)^r}} =
        e^{k^r - (k-1)^r} - 1 = \frac{r}{k^{1-r}} (1+o(1)) \,.
\end{equation}
Since $N \le N_k=\lfloor e^{k^r}\rfloor$, it comes that $k \ge (\log N)^{1/r}$ and therefore \eqref{eq:nkineg} allows to write
for large $N$
\begin{equation*}
        \frac{N_k-N_{k-1}}{N_{k-1}} \le \frac{1}{(\log N)^{1/r - 1}} \,.
\end{equation*}
Looking back at \eqref{eq:wowo} and \eqref{eq:wowoZ},
by \eqref{eq:ubX1} we then have eventually $\bbP$-a.s.
\begin{equation} \label{eq:wowo2}
        Z_{N}^{(1)} - Z_{N}^{(3)} \ge \frac{N^{d/\ga}}{2\,(\log N)^{\gamma + 1}}
        - \frac{N^{d/\ga}}{(\log N)^{1/r - 1/\ga - 2}} \,.
\end{equation}

The second term in the right hand side of
\eqref{eq:wowo2} can be neglected provided
the parameters $r \in (0,1)$ and $\gamma \in (0,\infty)$
fulfill the condition $1/r - 1/\ga - 2 > \gamma + 1$. We recall
that we also have to obey the condition $2r\gamma > 1$.
Therefore, for a fixed value of $r$, the set
of allowed values for $\gamma$ is the interval
$(\frac{1}{2r}, \frac{1}{r} - \frac{1}{\ga} - 3)$,
which is non-empty if $r$ is small enough.
This shows that the two conditions on $r, \gamma$ can indeed
be satisfied together (a possible choice
is, e.g., $r = \frac{\ga}{6(3\ga + 1)}$ and $\gamma = \frac{4(3\ga +1)}{\ga}$).
Setting $\beta := \gamma +1$, it then follows from \eqref{eq:wowo2}
that equation \eqref{eq:gap13} holds true.
\end{proof}


\medskip
\section{Almost sure localization at two points}
\label{sec:loc2}

In this section we prove Theorem~\ref{th:theowz}.
We first set up some notation and give some preliminary estimates.


%


\subsection{Prelude}

We recall that $z_N^{(1)}$ and $z_N^{(2)}$ are the two
sites in $\cB_N$ at which the modified potential
$\psi_N$, cf. \eqref{eq:psi}, attains its two largest values
$Z_N^{(1)} = \psi_N(z_N^{(1)})$ and $Z_N^{(2)} = \psi_N(z_N^{(2)})$.

It is convenient to define $J_1, J_2 \in \{1, \ldots, |\cB_N|\}$ such that 
\begin{align}\label{eq:defim}
	z_N^{(1)} = x_N^{(J_1)}\,, \qquad 	z_N^{(2)} = x_N^{(J_2)}\,,
\end{align}
where we recall that $x_N^{(k)}$ is the point in $\cB_N$ at which the potential
$\xi$ attains its $k$-th largest value, i.e., $X_N^{(k)} = \xi(x_N^{(k)})$,
cf. Section~\ref{sec:statfield}.
We stress that $J_1$ and $J_2$ are functions of $N$ and
$\xi$, although we do not indicate this explicitly.
An immediate consequence of Lemma~\ref{cori} and relation \eqref{eq:asXklogN}
is the following

\begin{corollary} \label{th:boundJ}
For every $d$, $\alpha$, $\gep>0$, eventually $\bbP$-a.s.
\begin{equation}\label{eq:contr}
        \max\{J_1, J_2\} \leq (\log N)^{1+\gep}. 
\end{equation}
\end{corollary}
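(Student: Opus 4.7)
The plan is to combine the lower bound on $Z_N^{(1)}, Z_N^{(2)}$ from Lemma~\ref{cori} with the upper bound on moderate order statistics $X_N^{(k)}$ from \eqref{eq:asXklogN}, using the elementary comparison $\psi_N(x) \le \xi(x)$.

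Fix $\gep > 0$. First, observe that by definition,
\begin{equation*}
	Z_N^{(i)} \,=\, \psi_N(z_N^{(i)}) \,\le\, \xi(z_N^{(i)}) \,=\, X_N^{(J_i)} \,, \qquad i \in \{1,2\} \,,
\end{equation*}
because $\psi_N(x) = (1 - |x|/(N+1)) \xi(x) \le \xi(x)$ for $x \in \cB_N$. So if we can exhibit some threshold $k^* = k^*(N)$ such that $X_N^{(k^*)}$ is strictly smaller than the lower bound on $Z_N^{(2)}$, then necessarily $J_1, J_2 < k^*$ (since the $X_N^{(k)}$ are decreasing in $k$).

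Apply \eqref{eq:asXklogN} with $\theta := 1 + \gep$ and with (the $\gep$ of the lemma chosen as) $\gep' := \gep/(2\alpha)$, say; this gives, eventually $\bbP$-a.s.,
\begin{equation*}
	X_N^{((\log N)^{1+\gep})} \,\le\, \frac{N^{d/\alpha}}{(\log N)^{(1+\gep)/\alpha - \gep'}} \,.
\end{equation*}
On the other hand, Lemma~\ref{cori} (applied with any $\gep'' > 0$) gives eventually $\bbP$-a.s.
\begin{equation*}
	Z_N^{(2)} \,\ge\, \frac{N^{d/\alpha}}{(\log \log N)^{1/\alpha + \gep''}} \,.
\end{equation*}
Since $(1+\gep)/\alpha - \gep' > 0$ by our choice of $\gep'$, and since any strictly positive power of $\log N$ dominates any power of $\log\log N$, for $N$ large enough we have
\begin{equation*}
	\frac{N^{d/\alpha}}{(\log N)^{(1+\gep)/\alpha - \gep'}} \,<\, \frac{N^{d/\alpha}}{(\log \log N)^{1/\alpha + \gep''}} \,,
\end{equation*}
and consequently $X_N^{((\log N)^{1+\gep})} < Z_N^{(2)} \le Z_N^{(1)}$.

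Combining this with the earlier comparison $Z_N^{(i)} \le X_N^{(J_i)}$ yields $X_N^{(J_i)} > X_N^{((\log N)^{1+\gep})}$ eventually $\bbP$-a.s., and since the order statistics $k \mapsto X_N^{(k)}$ are strictly decreasing, this forces $J_i < (\log N)^{1+\gep}$ for $i = 1,2$. The only mildly delicate point is that $\gep$ enters three different estimates (the exponent in the statement, the $\gep$ of \eqref{eq:asXklogN}, and the $\gep$ of Lemma~\ref{cori}); but since all of these can be chosen independently, the inequality between a true positive power of $\log N$ and a power of $\log\log N$ holds with room to spare, and no fine tuning is required.
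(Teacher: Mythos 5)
Your proof is correct and follows exactly the route the paper intends (the paper declares the corollary "an immediate consequence of Lemma~\ref{cori} and relation~\eqref{eq:asXklogN}" without spelling out the details). The key observations — $X_N^{(J_i)} = \xi(z_N^{(i)}) \ge \psi_N(z_N^{(i)}) = Z_N^{(i)} \ge Z_N^{(2)}$, and the comparison of the $(\log\log N)$-power lower bound on $Z_N^{(2)}$ against the $(\log N)$-power upper bound on $X_N^{((\log N)^{1+\gep})}$ — are precisely the intended ones, and your handling of the various $\gep$'s is fine since, as you note, there is room to spare.
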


Next we define the local time $\ell_N(x)$ of a random walk trajectory $S \in \Omega_S$ by
\begin{equation} \label{eq:lN}
        \ell_N(x) = \ell_N(x,S) = \sum_{i=1}^N \ind_{\{S_i = x\}} \,,
\end{equation}
so that the Hamiltonian $H_N(S)$, cf. \eqref{eq:model}, can be rewritten as
\begin{equation} \label{eq:HlN}
        H_N(S) = \sum_{x \in \cB_N} \ell_N(x) \, \xi(x) \,.
\end{equation}
We also associate to every trajectory $S$ the quantity
\begin{equation} \label{eq:betaN}
		\beta_N(S) := \min\{k\geq 1\colon \ell_N(x_N^{(k)})>0\} \,.
\end{equation}
In words, \emph{$x_N^{(\beta_N(S))}$ is the site in $\cB_N$ which
maximizes the potential $\xi$ among those
visited by the trajectory $S$ before time $N$}. Finally, we introduce the basic events
\begin{equation} \label{eq:AN}
	\cA_{1,N} := \big\{ S \in \Omega_S :\, \beta_N(S) = J_1 \big\}\,,
	\qquad \cA_{2,N} := \big\{ S \in \Omega_S :\, \beta_N(S) = J_2 \big\} \,.
\end{equation}
In words, the event \emph{$\cA_{i,N}$
consists of the random walk trajectories $S$ that before time $N$
visit the site $z_N^{(i)}$ (recall \eqref{eq:defim})
and do not visit any other site $x$ with $\xi(x) > \xi(z_N^{(i)})$}.

\smallskip

It turns out that the localization of $S_N$ at the point $z_N^{(i)}$
is implied by the event $\cA_{i,N}$, i.e.,
for both $i=1,2$ we have
\begin{equation} \label{eq:bigtool}
	\lim_{N \to \infty} \bP_{N,\xi} \big( \cA_{i,N}, \, S_N \ne z_N^{(i)} \big)
	\,=\, 0 \,, \qquad \bbP(\dd\xi)\text{-almost surely}\,.
\end{equation}
The proof is simple. Denoting by $\tilde{\tau}_{N,i}$
the \emph{last} passage time of the random walk in $z_N^{(i)}$
before time $N$, that is
$$
        \tilde{\tau}_{N,i}:=\max\{n\leq N \colon S_n=z_N^{(i)}\} \,,
$$
we can write, recalling \eqref{eq:model},
\begin{align}\label{boundT}
        \bP_{N,\xi} \big( \cA_{i,N}, \, S_N \ne z_N^{(i)} \big) 
        = \sum_{r=0}^{N-1} \frac{\e\big[e^{H_N(S)}\, 
        \ind_{\cA_{i,N}} \, \ind_{\{\tilde\tau_{N,i}=r\}} \big]}{U_N} \,.
\end{align}
We stress that the sum stops at $r=N-1$, because we are on the event
$S_N \ne z_N^{(i)}$. Furthermore, on the event $\cA_{i,N} \cap \{\tilde\tau_{N,i}=r\}$
we have $S_n \not\in \{x_N^{(1)}, \ldots, x_N^{(J_i)}\}$ for all
$n \in \{r+1, \ldots, N\}$ (we recall that $z_N^{(i)} = x_N^{(J_i)}$).
By the Markov property, we can then bound the numerator in
the right hand side of \eqref{boundT} by
\begin{gather}\label{eq:Bnr}
\begin{split}
        \e \Big[e^{H_N(S)}\, \ind_{\cA_{i,N}} \, \ind_{\{\tilde\tau_{N,i}=r\}} \Big]
        &\,\leq\, \e \Big[e^{H_r(S)}\,\ind_{\{S_r=z_N^{(i)}\}}\Big] \, B_{N-r}^{(N,i)} \,, \\
        \text{where} \qquad B_{l}^{(N,i)} & \,:= \,
        \e_{x_N^{(J_i)}}\Big[e^{H_{l}(S)} \, 
        \ind_{\{ S_n \not\in \{x_N^{(1)}, \ldots, x_N^{(J_i)}\} \,
        \forall n = 1, \ldots, l\}}\Big] \, .
\end{split}
\end{gather}
Analogously, for the denominator in the right hand side of \eqref{boundT},
recalling \eqref{eq:rw0}, we have
\begin{equation*}
        U_N \ge \e \big[e^{H_N(S)}\, \ind_{\{S_n = x_N^{(J_i)} , \, \forall n \in
        \{r, \ldots, N\}\}} \big] = \e \Big[e^{H_r(S)}\, \ind_{\{S_r = x_N^{(J_i)}\}}\Big]
        \, \kappa^{N-r} e^{(N-r) X_N^{(J_i)}} \,.
\end{equation*}
Plainly, $B_l^{(N,i)} \le \exp(l X_N^{(J_i + 1)})$, therefore we can write
\begin{align} \label{eq:baust}
\nonumber       
		\bP_{N,\xi} \big( \cA_{i,N}, \, S_N \ne z_N^{(i)} \big) 
		& \le \sum_{r=0}^{N-1} e^{-(N-r) (X_N^{(J_i)} + \log \kappa)} \,
        B_{N-r}^{(N,i)}  = \sum_{l=1}^{N} e^{-l (X_N^{(J_i)} + \log \kappa)} \,
        B_{l}^{(N,i)}\\
        &	\leq \sum_{l=1}^{\infty} 
        e^{-l (X_N^{(J_i)} -X_N^{(J_i+1)}- \log \kappa)} =
        \frac{e^{-(X_N^{(J_i)}-X_N^{(J_i+1)}-\log \kappa)}}
        {1 - e^{-(X_N^{(J_i)}-X_N^{(J_i+1)}-\log \kappa)}} \,.
\end{align}
From Corollary~\ref{th:boundJ} and
Proposition~\ref{th:gap15} it follows that $\bbP(\dd\xi)$-almost surely
$X_N^{(J_i)}-X_N^{(J_i+1)} \to +\infty$ as $N\to\infty$, therefore
\eqref{eq:bigtool} is proved.


\subsection{Proof of \eqref{eq:2points}}

Let us set, for $i=1,2$,
\begin{equation} \label{eq:WN}
\begin{split}
	\cW_{i,N} := & \big\{ S \in \Omega_S :\, \beta_N(S) = J_i,\ S_N=z_N^{(i)} \big\} \\
	= & \big\{S \in \Omega_S:\, S_N = z_N^{(i)},\ \ell_N(x) = 0 \
	\forall x \in \cB_N \text{ such that } \xi(x) > \xi(z_N^{(i)}) \big\} \,.
\end{split}
\end{equation}
In words, the event \emph{$\cW_{i,N}$
consists of those trajectories $S$ such that $S_N = z_N^{(i)}$
and that before time $N$ do not visit any site $x$ with $\xi(x) > \xi(z_N^{(i)})$}.
We are going to prove that
\begin{equation} \label{eq:toto}
	\lim_{N\to\infty} \big( \bP_{N,\xi}(\cW_{1,N}) + \bP_{N,\xi}(\cW_{2,N}) \big)
	= 1 \,, \qquad \bbP(\dd\xi)\text{-almost surely}\,,
\end{equation}
which is a stronger statement than \eqref{eq:2points}. In view of \eqref{eq:bigtool},
it is sufficient to prove that
\begin{equation} \label{eq:tototo}
	\lim_{N\to\infty} \big( \bP_{N,\xi}(\cA_{1,N}) + \bP_{N,\xi}(\cA_{2,N}) \big)
	= 1\,, \qquad \bbP(\dd\xi)\text{-almost surely}\,.
\end{equation}

We start deriving an upper bound on the Hamiltonian $H_N = H_{N,\xi}$
(recall \eqref{eq:model}).
For an arbitrary $k \in \{1, \ldots, |\cB_N|\}$, to be chosen later, recalling
\eqref{eq:lN}, \eqref{eq:HlN}, \eqref{eq:betaN} and the fact that
$\sum_{x\in\Z^d} \ell_N(x) = N$, we can write
\begin{equation} \label{eq:twoH}
\begin{split}
        H_N(S) & = \sum_{i=\beta_N(S)}^k \ell_N(x_N^{(i)}) \xi(x_N^{(i)})
        + \sum_{i=k+1}^{|\cB_N|} \ell_N(x_N^{(i)}) \xi(x_N^{(i)}) \\
        & \le \Bigg( \sum_{i=\beta_N(S)}^k \ell_N(x_N^{(i)}) \Bigg)\, 
        \xi\big(x_N^{(\beta_N(S))}\big) + N\, X_N^{(k+1)} \,.
\end{split}
\end{equation}
Note that $\ell_N\big(x_N^{(\beta_N(S))}\big) > 0$, that is, any trajectory
$S$ visits the site $x_N^{(\beta_N(S))}$ before time $N$, by the very definition
\eqref{eq:betaN} of $\beta_N(S)$. It follows that any trajectory $S$ before time $N$ 
must visit at least $|x_N^{(\beta_N(S))}|$ different
sites, of which at least $|x_N^{(\beta_N(S))}| - k$ are different 
from $x_N^{(1)}$, \ldots, $x_N^{(k)}$. This leads to the basic estimate
\begin{equation}\label{eq:esti}
        \sum_{i=\beta_N(S)}^k \ell_N(x_N^{(i)}) \le N -|x_N^{(\beta_N(S))}| + k \,.
\end{equation}
By \eqref{eq:twoH} and recalling \eqref{eq:psi}, this yields the crucial upper bound
\begin{equation}\label{eq:esti2}
\begin{split}
        H_N(S) & \,\le\, (N+1) \psi_N\big(x_N^{(\beta_N(S))}\big) 
        + (k-1)\, \xi\big(x_N^{(\beta_N(S))}\big) + N\, X_N^{(k+1)} \\
        & \,\le\, (N+1) \psi_N\big(x_N^{(\beta_N(S))}\big) 
        + (k-1)\, X_N^{(1)} + N\, X_N^{(k+1)} \,.
\end{split}
\end{equation}
We stress that this bound holds for all $k \in \{1, \ldots, |\cB_N|\}$
and for all trajectories $S \in \Omega_S$.

Next we give a lower bound on $U_N$ (recall \eqref{eq:U}). We
restrict the expectation to one single $N$-steps random walk trajectory, 
denoted by $S^* = \{S^*_i\}_{0 \le i \le N}$,
that goes to $z_N^{(1)}$ in the minimal number of steps,
i.e. $|z_N^{(1)}|$, and then stays there until epoch $N$.
By \eqref{eq:rw0}, this trajectory
has a probability larger than $e^{-cN}$ for some positive contant $c$,
therefore
\begin{equation} \label{eq:lbU}
        U_N \ge e^{H_N(S^*)-cN} \ge
        e^{\xi(z_N^{(1)}) (N+1 - |z_N^{(1)}|)-cN} = 
        e^{(N+1) \psi_N(z_N^{(1)})-cN} \ge
        e^{(N+1) (Z_N^{(1)} -c)} \,,
\end{equation}
where we have used the definition of $\psi_N$, see \eqref{eq:psi}.

We can finally come to the proof of \eqref{eq:tototo}. For all trajectories
$S \in (\cA_{1,N} \cup \cA_{2,N})^c$ we have $\beta_N(S) \not \in \{J_1, J_2\}$,
therefore $x_N^{(\beta_N(S))} \not \in \{z_N^{(1)}, z_N^{(2)}\}$ and consequently
$\psi_N(x_N^{(\beta_N(S))}) \le Z_N^{(3)}$. From \eqref{eq:esti2} and \eqref{eq:lbU}
we then obtain
\begin{equation}
\begin{split}
	& \bP_{N,\xi} \big( (\cA_{1,N} \cup \cA_{2,N})^c \big) \,=\,
	\frac{\bE( e^{H_N(S)} \, \ind_{(\cA_{1,N} \cup \cA_{2,N})^c})}{U_{N,\xi}} \\
	& \qquad \qquad \,\le\, \exp \left( -(N+1) \left( (Z_N^{(1)} - Z_N^{(3)})
	- X_N^{(k+1)} - \tfrac{k-1}{N+1} X_N^{(1)} - c \right) \right) \,.
\end{split}
\end{equation}
By \eqref{eq:gap13}, there exists $\beta \in (1,\infty)$
such that $Z_N^{(1)} - Z_N^{(3)} \ge N^{d/\alpha}/(\log N)^\beta$
eventually $\bbP$-almost surely.
We now choose $k=k_N = (\log N)^{\theta}$ with $\theta := 3\max\{\beta \alpha, 1\} > 1$.
Applying \eqref{eq:ubX1} with $\gep = 1/\alpha$ and
\eqref{eq:asXklogN} with $\gep = \beta$, we have eventually $\bbP$-a.s.
\begin{equation*}
\begin{split}
		& \left(  (Z_N^{(1)} - Z_N^{(3)})
		- X_N^{(k_N + 1)} - \tfrac{k_N - 1}{N+1} X_N^{(1)} - c \right) \\
        & \qquad \ge N^{d/\ga}
        \bigg( \frac{1}{(\log N)^\beta} - \frac{1}{(\log N)^{2 \beta}} 
        - \frac{(\log N)^{\theta + 2/\alpha}}{N}
        - \frac{c}{N^{d/\ga}} \bigg) = \frac{N^{d/\ga}}{(\log N)^{\beta}} (1+o(1)) \,,
\end{split}
\end{equation*}
therefore, eventually $\bbP$-almost surely,
\begin{equation*}
\begin{split}
	\bP_{N,\xi} (\cA_{1,N}) + \bP_{N,\xi} (\cA_{2,N})
	& \,=\, 1 - \bP_{N,\xi} \big( (\cA_{1,N} \cup \cA_{2,N})^c \big) \\
	& \,\ge\, 1 - \exp \left( -\frac{N^{1 + d/\ga}}{(\log N)^{\beta}} (1+o(1)) \right) \,,
\end{split}
\end{equation*}
which completes the proof of \eqref{eq:tototo}.


\subsection{Proof of \eqref{eq:1pointbis}}

Recalling \eqref{eq:WN}, we are going to prove that
\begin{equation} \label{eq:toto2}
	\lim_{N\to\infty} \bP_{N,\xi}(\cW_{1,N})
	= 1 \,, \qquad \text{in $\bbP(\dd\xi)$-probability}\,,
\end{equation}
which is stronger than \eqref{eq:1pointbis}.
In view of \eqref{eq:bigtool}, it suffices to show that
\begin{equation} \label{eq:tototo2}
	\lim_{N\to\infty} \bP_{N,\xi}(\cA_{1,N})
	= 1 \,, \qquad \text{in $\bbP(\dd\xi)$-probability}\,.
\end{equation}
We actually prove the following: for every
$N \in \N$ there exists a subset $\Gamma_N \subseteq \Omega_\xi$ such that as $N\to\infty$
one has $\bbP(\Gamma_N) \to 1$ and
$\inf_{\xi\in\Gamma_N} \bP_{N,\xi}(\cA_{1,N}) \to 1$, which implies \eqref{eq:tototo2}.

For every trajectory
$S \in (\cA_{1,N})^c$ we have $\beta_N(S) \ne J_1$,
therefore $x_N^{(\beta_N(S))} \ne z_N^{(1)}$ and consequently
$\psi_N(x_N^{(\beta_N(S))}) \le Z_N^{(2)}$. From \eqref{eq:esti2} and \eqref{eq:lbU}
we then obtain
\begin{equation} \label{eq:parapao}
\begin{split}
	\bP_{N,\xi} \big( (\cA_{1,N})^c \big) & \,=\,
	\frac{\bE( e^{H_N(S)} \, \ind_{(\cA_{1,N})^c})}{U_{N,\xi}} \\
	& \,\le\, \exp \left( -(N+1) \left( (Z_N^{(1)} - Z_N^{(2)})
	- X_N^{(k+1)} - \tfrac{k-1}{N+1} X_N^{(1)} - c \right) \right) \,.
\end{split}
\end{equation}
We set $\Gamma_N^{(1)} := \{ Z_N^{(2)} \le (1 - \frac{1}{\log N}) Z_N^{(1)}\}$
and it follows from \eqref{eq:gap123} that $\bbP(\Gamma_N^{(1)}) \to 1$ as $N \to \infty$.
Note that for $\xi \in \Gamma_N^{(1)}$ we have
\begin{equation*}
\begin{split}
	& \left( (Z_N^{(1)} - Z_N^{(2)})
	- X_N^{(k+1)} - \tfrac{k-1}{N+1} X_N^{(1)} - c \right) 
	\ge \left( \tfrac{1}{\log N} Z_N^{(1)} 
	- X_N^{(k+1)} - \tfrac{k-1}{N+1} X_N^{(1)} - c \right) \,.
\end{split}
\end{equation*}
We now fix $k=k_N = (\log N)^{\theta}$ with $\theta := 3\max\{2\alpha, 1\} > 1$.
Applying \eqref{eq:ubX1} with $\gep = 1/\alpha$,
\eqref{eq:asXklogN} with $\gep = 2$ and \eqref{eq:lbZ1}, we have eventually $\bbP$-a.s.
\begin{equation*}
\begin{split}
		& \left(  \tfrac{1}{\log N} Z_N^{(1)} 
		- X_N^{(k_N+1)} - \tfrac{k_N-1}{N+1} X_N^{(1)} - c \right) \\
        & \qquad \ge N^{d/\ga}
        \bigg( \frac{1}{(\log N)^2} - \frac{1}{(\log N)^{4}} 
        - \frac{(\log N)^{\theta + 2/\alpha}}{N}
        - \frac{c}{N^{d/\ga}} \bigg) = \frac{N^{d/\ga}}{(\log N)^{2}} (1+o(1)) \,.
\end{split}
\end{equation*}
In particular, defining
$\Gamma_N^{(2)} := \{ \tfrac{1}{\log N} Z_N^{(1)} - X_N^{(k+1)} - \tfrac{k-1}{N+1} X_N^{(1)} - c
> N^{d/\alpha} / (\log N)^3\}$, we have $\bbP(\Gamma_N^{(2)}) \to 1$
as $N\to\infty$. Setting $\Gamma_N := \Gamma_N^{(1)} \cap \Gamma_N^{(2)}$,
we have $\bbP(\Gamma_N) \to 1$ as $N\to\infty$; furthermore,
by the preceding steps we have that, for all $\xi \in \Gamma_N$,
\begin{equation*}
	\bP_{N,\xi} \big( \cA_{1,N} \big) \,=\,
	1 - \bP_{N,\xi} \big( (\cA_{1,N})^c \big) \,\ge\, 
	1 - \exp \left( (N+1) \frac{N^{d/\alpha}}{(\log N)^3} \right) \,.
\end{equation*}
This completes the proof of \eqref{eq:tototo2}.



\medskip
\section{Almost sure localization at one point}
\label{sec:loc1}

In this section we prove Theorem~\ref{th:main}.
Relation \eqref{eq:1point} is obtained in two steps.
First, we refine the results of the previous section, showing that
\eqref{eq:toto} still holds if we replace the events $\cW_{i,N}$,
$i = 1,2$, that were introduced in \eqref{eq:WN}, by
\begin{equation} \label{eq:tildeWN}
\begin{split}
	\tilde{\cW}_{i,N} \,:=\, & \Big\{ S \in \Omega_S \colon\; 
	\beta_N(S) = J_i, \ S_N=z_N^{(i)}, \ \ell_N(z_N^{(i)})
	> \tfrac{N-|z_N^{(i)}|}{2} \Big\} \\
	\,=\, & \Big\{ S \in \Omega_S \colon\; S_N=z_N^{(i)}, \ \ell_N(z_N^{(i)})
	> \tfrac{N-|z_N^{(i)}|}{2}, \\
	& \qquad \qquad \quad \ell_N(x) = 0 \
	\forall x \in \cB_N \text{ such that } \xi(x) > \xi(z_N^{(i)}) \Big\} \,,	
\end{split}
\end{equation}
that is, if we require that the random walk trajectories spend at $z_N^{(i)}$
at least $(N-|z_N^{(i)}|)/2$ units of time (recall \eqref{eq:lN}). In the second step,
we show that eventually $\bbP(\dd\xi)$-almost surely
\begin{equation} \label{eq:parapeo}
	\max\big\{ \bP_{N,\xi}(\tilde \cW_{1,N}),\, \bP_{N,\xi}(\tilde \cW_{2,N})	\big\}
	\;\gg\; \min \big\{ \bP_{N,\xi}(\tilde \cW_{1,N}),\, \bP_{N,\xi}(\tilde \cW_{2,N}) \big\} \,,
\end{equation}
which yields \eqref{eq:1point}. Finally, we prove \eqref{eq:rweak}
in Section~\ref{sec:rweak}.

%
%


\subsection{Step 1}

In this step we refine \eqref{eq:toto}, showing that
\begin{equation} \label{eq:loctilde}
	\lim_{N\to\infty} \big( \bP_{N,\xi}(\tilde\cW_{1,N}) + 
	\bP_{N,\xi}(\tilde\cW_{2,N}) \big)
	= 1 \,, \qquad \bbP(\dd\xi)\text{-almost surely}\,,	
\end{equation}
where $\tilde\cW_{i,N}$ is defined in \eqref{eq:tildeWN}.
Consider indeed $S\in \cW_{i,N}\setminus\tilde{\cW}_{i,N}$,
with $i\in \{1,2\}$. Before reaching $z_N^{(i)}$, $S$ must visit at least $|z_N^{(i)}|-1$ 
different sites at which, by definition of $\cW_{i,N}$,
the field is smaller than $\xi(z_N^{(i)}) = X_N^{(J_i)}$ (recall \eqref{eq:defim}), hence
\begin{equation*}
	H_N(S) \leq \ell_N(z_N^{(i)}) \, X_N^{(J_i)}
	+ \sum_{j=1}^{|z_N^{(i)}|-1} X_N^{(J_i+j)}
	+ \big( N - \ell_N(z_N^{(i)}) - (|z_N^{(i)}|-1) \big) X_N^{(J_i+1)} \,.
\end{equation*}
Since $\ell_N(z_N^{(i)})\leq (N-|z_N^{(i)}|)/2$ on $\cW_{i,N}\setminus\tilde{\cW}_{i,N}$, we obtain
$$
	H_N(S) \leq \tfrac{N-|z_N^{(i)}|}{2} \, X_N^{(J_i)}
	+ \sum_{j=1}^{|z_N^{(i)}|-1} X_N^{(J_i + j)}
	+ \Big(\tfrac{N-|z_N^{(i)}|}{2} + 1 \Big) X_N^{(J_i + 1)} \,.
$$
Rewriting \eqref{eq:lbU} as 
$U_N\geq e^{(N+1-|z_N^{(i)}|) X_N^{(J_i)}-cN}$ (recall \eqref{eq:psi}),
we can write
\begin{equation}\label{trajneg}
\begin{split}
	\bP_{N,\xi} \big( \cW_{i,N}\setminus\tilde{\cW}_{i,N} \big) & =
	\frac{\e(e^{H_N(S)} \ind_{\{S\in \cW_{i,N}\setminus\tilde{\cW}_{i,N}\}})}{U_{N}} \\
	& \leq e^{c N} \exp\Bigg( -\frac{N-|z_N^{(i)}|}{2} (X_N^{(J_i)}-X_N^{(J_i+1)})
	+ \sum_{j=1}^{|z_N^{(i)}|-1} X_N^{(J_i+j)} \Bigg) \,.
\end{split}
\end{equation}

Applying \eqref{eq:lbZ1} with $\gep=1/\alpha$ and \eqref{eq:ubX1}
with $\gep=\gep/2$, it follows that eventually $\bbP$-a.s. 
\begin{equation*}
Z_N^{(1)}\geq Z_N^{(2)}\geq \tfrac{N^{d/\alpha}}{(\log\log N)^{2/\alpha}}\quad \text{and}\quad  
\max\{X_N^{(J_1)},X_N^{(J_2)}\}\leq N^{d/\alpha}\, (\log N)^{1/\alpha+\gep/2} \,.
\end{equation*}
Since by definition $Z_N^{(i)}= (1-\frac{|z_N^{(i)}|}{N+1}) X_N^{(J_i)}$,
it follows that for both
$i \in \{1,2\}$ and for every $\gep>0$, eventually $\mathbb{P}$-a.s. 
\begin{equation} \label{eq:ecart}
	N-|z_N^{(i)}| \geq \frac{N}{(\log N)^{1/\alpha+\gep}} \,.
\end{equation}
Next we observe that, by the upper bound in \eqref{eq:ubX1} and \eqref{eq:asbig}, we have
\begin{align*}
	\sum_{j=1}^N X_N^{(j)} \le (\log N) X_N^{(1)} +
	\sum_{j= \lceil \log N \rceil}^N X_N^{(j)} 
	\le N^{d/\alpha} \Bigg( (\log N)^{1 + 3/2\alpha} 
	+ \sum_{j= \lceil \log N \rceil}^N 
	\frac{1}{j^{1/\alpha}} \Bigg) \,,
\end{align*} 
therefore there exists a constant $c > 0$
such that, eventually $\bbP$-almost surely,
\begin{equation} \label{eq:sumbou}
	\sum_{j=1}^N X_N^{(j)} \le \begin{cases}
	c \, N^{d/\alpha + 1 - 1/\alpha} & \text{ if } \alpha > 1 \\
	(\log N)^{1 + 3/2\alpha} N^{d/\alpha}  & \text{ if } \alpha \le 1
	\end{cases} \,.
\end{equation}

Looking back at \eqref{trajneg}, we can
apply \eqref{eq:ecart} and \eqref{eq:sumbou} as well as Proposition \ref{th:gap15} 
and Corollary \ref{th:boundJ} to conclude that $\bbP(\dd\xi)$-a.s. 
the right hand side of \eqref{trajneg} vanishes as $N\to \infty$.
Recalling \eqref{eq:toto}, it follows that \eqref{eq:loctilde} holds true,
and the first step is completed.


\subsection{Step 2}

In this step we prove that
\begin{equation}\label{eq:ratio}
	\lim_{N\to \infty} 
	\big|\log \bP_{N,\xi} ( \tilde{\cW}_{1,N} )
	- \log \bP_{N,\xi} (\tilde{\cW}_{2,N}) \big|=\infty \,, \qquad
	\bbP(\dd\xi)\text{-almost surely}\,.
\end{equation}
Together with \eqref{eq:loctilde}, this shows that
\begin{equation} \label{eq:loctildestrong}
	\lim_{N\to\infty} \; \max\Big\{ \bP_{N,\xi}(\tilde\cW_{1,N}) ,\; 
	\bP_{N,\xi}(\tilde\cW_{2,N}) \Big\}
	= 1 \,, \qquad \bbP(\dd\xi)\text{-almost surely}\,,	
\end{equation}
which yields \eqref{eq:1point} and, moreover, shows that
\begin{equation*}
	\rw_{N,\xi} \,=\, \begin{cases}
	z_N^{(1)} & \text{ if }
	\bP_{N,\xi} ( \tilde{\cW}_{1,N} ) > \bP_{N,\xi} (\tilde{\cW}_{2,N}) \\
	\rule{0pt}{1.4em}z_N^{(2)} & \text{ if }
	\bP_{N,\xi} ( \tilde{\cW}_{2,N} ) > \bP_{N,\xi} (\tilde{\cW}_{1,N})
	\end{cases} \,, \quad
	\text{eventually } \bbP(\dd\xi)\text{-almost surely}\,.
\end{equation*}

It is convenient to introduce some further notation. Recalling \eqref{eq:tildeWN},
for $N \in \N$ and $x \in \cB_N$ we define the following subsets of $\Omega_S$:
\begin{equation} \label{eq:tildeWNbis}
	\tilde \cW_N(x) \,:=\, \Big\{S \in \Omega_S:
	S_N = x, \ \ell_N(x) > \tfrac{N-|x|}{2}, \
	\ell_N(z) = 0 \ \forall z \text{ s.t. } 	\xi(z) > \xi(x) \Big\} \,,
\end{equation}
so that $\tilde\cW_{i,N} = \tilde\cW_N(z_N^{(i)})$. Next we set
\begin{equation} \label{eq:CN}
	C_N(x) \,:=\, \log \e \big[ e^{H_N(S)} \, \ind_{\{S \in \tilde\cW_N(x)\}} \big] \,,
\end{equation}
so that we can write
\begin{equation} \label{eq:useful}
	\big|\log \bP_{N,\xi} ( \tilde{\cW}_{1,N} ) 	- \log \bP_{N,\xi} (\tilde{\cW}_{2,N}) \big|
	\,=\, \big| C_N(z_N^{(1)}) - C_N(z_N^{(2)}) \big| \,.
\end{equation}
Finally, given an arbitrary $\epsilon \in (0, d/\alpha)$ and setting 
$N_k := \lfloor k^{2/\epsilon} \rfloor$,
we introduce the event $\cH_k \subseteq \Omega_\xi$ defined by
\begin{equation} \label{eq:Hk}
\begin{split}
	\cH_k \,:=\, \Big\{ & \xi \in \Omega_\xi: \ \exists x, y \in \cB_{N_{k+1}},
	\, x \ne y, \ \exists n \in \{\max\{|x|,|y|\}, \ldots, N_{k+1}\} \text{ such that} \\
	& \ \xi(x) > \tfrac{(N_k)^{d/\alpha}}{(\log N_{k+1})^{2/\alpha}}, \
	\xi(y) >	 \tfrac{(N_k)^{d/\alpha}}{(\log N_{k+1})^{2/\alpha}}, \
	| C_n(x) - C_n(y) | \le N_k^{d/\alpha - \epsilon} \Big\} \,.
\end{split}
\end{equation}

We are going to show that
\begin{equation} \label{eq:BCH}
	\sum_{k\in\N} \bbP(\cH_k) < \infty \,.
\end{equation}
We claim that this implies \eqref{eq:ratio} and completes the step. Indeed, by 
the Borel-Cantelli lemma it follows from \eqref{eq:BCH} that for $\bbP$-almost every
$\xi \in \Omega_\xi$ there exists $\overline k = \overline k(\xi) < \infty$
such that $\xi \not \in \cH_k$ for all $k \ge \overline k$. For any $N \ge N_{\overline k}$,
let $k \in \N, k \ge \overline k$ be such that $N_k < N \le N_{k+1}$ and note that, plainly,
$z_N^{(1)}, z_N^{(2)} \in \cB_N \subseteq \cB_{N_{k+1}}$.
Recalling the lower bound in \eqref{eq:lbZ1} and \eqref{eq:useful},
since $\xi \not \in \cH_k$ for all $k \ge \overline k$ we conclude that
eventually $\bbP(\dd\xi)$-almost surely
\begin{equation*}
	\big|\log \bP_{N,\xi} ( \tilde{\cW}_{1,N} ) 	- \log \bP_{N,\xi} (\tilde{\cW}_{2,N}) \big|
	\ge N^{d/\alpha - \epsilon} \,,
\end{equation*}
which is a stronger statement than \eqref{eq:ratio}.

\smallskip

We are left with proving \eqref{eq:BCH}, for which we have to estimate
\begin{equation} \label{eq:strum}
	\bbP \big( \xi(x) > t, \xi(y)>t, | C_n(x) - C_n(y)| \le M \big) \,,
\end{equation}
for suitable $t$ and $M$.
Recalling \eqref{eq:tildeWNbis} and \eqref{eq:CN}, it is useful to set
\begin{equation} \label{eq:CNyx}
	C_N(y;x) \,:=\, \log \e \big[ e^{H_N(S)} \, \ind_{S \in \tilde\cW_N(y)}
	\, \ind_{\{\ell_N(x)=0\}} \big] \,.
\end{equation}
Note in fact that, on the event $\xi(x) > \xi(y)$, we have $C_N(y) = C_N(y; x)$,
by the definition \eqref{eq:tildeWNbis} of $\tilde\cW_N(y)$. Therefore,
splitting \eqref{eq:strum} on $\{\xi(x) > \xi(y)\}$ and $\{\xi(x) < \xi(y)\}$
and using the symmetry between $x$ and $y$, we can easily estimate
\begin{equation} \label{eq:strum2}
\begin{split}
	& \bbP \big( \xi(x) > t, \xi(y)>t, | C_n(x) -  C_n(y)| \le M \big) \\
	& \quad \le 2 \, \bbP \big( \xi(x) > t, \xi(y)>t, 
	| C_n(x) -  C_n(y;x)| \le M \big) \\
	& \quad \le  2 \, \bbE \big[ \ind_{\{\xi(y) > t\}} \,
	\bbP \big( \xi(x) > t,\; | C_n(x) -  C_n(y;x)| \le M 
	\big| \cG_x \big) \big]\,,
\end{split}
\end{equation}
where $\cG_x := \sigma(\{\xi(z)\}_{z \in \Z^d \setminus\{x\}})$.
We stress that $ C_n(y;x)$ is $\cG_x$-measurable, because by
definition it does not depend on $\xi(x)$ (recall \eqref{eq:CNyx}).

We now need to study the dependence of $ C_n(x)$ on
$\xi(x)$ conditionally on $\cG_x$, i.e., when all the other
field variables $\{\xi(z), z \ne x\}$ are fixed. Recalling
\eqref{eq:CN}, \eqref{eq:tildeWNbis} and summing over the values
of the variable $\ell_N(x)$, we can write $C_n(x) = g(\xi(x))$,
where
\begin{equation*}
	g(s) \,:=\, \log \sum_{k = \frac 1 2 (n-|x| + 1)}^{n-|x|}
	e^{k s} \, c_{n,k} \qquad \text{and} \qquad
	c_{n,k} := \e \big[ e^{H_n(S) - k \xi(x)} \, \ind_{S \in \tilde \cW_n(x)}
	\, \ind_{\{\ell_N(x)=k\}} \big] \,.
\end{equation*}
We stress that, on the event $\{\ell_n(x)=k\}$, the term
$H_n(S) - k \xi(x)$ does not depend on $\xi(x)$. Therefore the coefficients $c_{n,k}$
(and, hence, the function $g(\cdot)$)
only depend on $\{\xi(z), z \ne x\}$, i.e., they are $\cG_x$-measurable.
Also note that the function $g(\cdot)$ is smooth and Lipschitz, since
\begin{equation*}
	g'(s) \,=\, \frac{\sum_{k = \frac 1 2 (n-|x| + 1)}^{n-|x|} k \, e^{ks} \, c_{n,k}}
	{\sum_{k = \frac 1 2 (n-|x| + 1)}^{n-|x|} e^{ks} \, c_{n,k}} \,\ge\,
	\frac 12 (n- |x| +1) \,.
\end{equation*}
Therefore, by the change of variables formula, from \eqref{eq:pareto} we obtain
\begin{equation*}
\begin{split}
	& \bbP\big( \xi(x) > t ,\ C_n(x) \in \dd v \,\big|\, \cG_x \big)
	= \bbP\big( \xi(x) > t ,\ g(\xi(x)) \in \dd v \,\big|\, \cG_x \big) \\
	& \quad = \ind_{\{g^{-1}(v) > \max\{1, t\}\}} \, \frac{1}{|g'(g^{-1}(v))|}
	\, \frac{\alpha}{(g^{-1}(v))^{1+\alpha}}
	\, \dd v \le
	\frac{2}{(n-|x|+1)} \, \frac{\alpha}{t^{1+\alpha}} \, \dd v \,,
\end{split}
\end{equation*}
hence
\begin{equation*}
\begin{split}
	& \bbP \big( \xi(x) > t,\; | C_n(x) -  C_n(y;x)| \le M \big| \cG_x \big) \\
	& \,=\, \bbP \big( \xi(x) > t,\; C_n(x) \in [C_n(y;x) - M,
	C_n(y;x) + M] \big| \cG_x \big) \,\le\, 
	\frac{2\alpha}{(n-|x|+1)t^{1+\alpha}} \cdot 2M \,.
\end{split}
\end{equation*}
Coming back to \eqref{eq:strum2}, since $\bbP(\xi(y) > t) \le t^{-\alpha}$, we conclude that
\begin{equation} \label{eq:strum3}
	\bbP \big( \xi(x) > t, \xi(y)>t, | C_n(x) -  C_n(y)| \le M \big) \,\le\,
	\frac{8 \alpha M}{(n-|x|+1)t^{1+2\alpha}} \,.
\end{equation}

We are finally ready to estimate $\bbP(\cH_k)$. Recalling the definition \eqref{eq:Hk}
and the fact that $N_k = \lfloor k^{2/\epsilon} \rfloor$, applying \eqref{eq:strum3} we obtain
\begin{equation*}
\begin{split}
	\bbP(\cH_k) & \,\le\, 2 \sumtwo{x \ne y \in \cB_{N_{k+1}}}{|x| \ge |y|} \;
	\sum_{n = |x|}^{N_{k+1}} \, \bbP \Big(
	\xi(x), \xi(y) > \tfrac{(N_k)^{d/\alpha}}{(\log N_{k+1})^{2/\alpha}}, 
	| C_n(x) -  C_n(y)| \le N_k^{d/\alpha - \epsilon} \big) \\
	& \,\le\, 2 \, (const.) \, (N_{k+1})^{2d} \, \frac{8\alpha N_k^{d/\alpha - \epsilon}}
	{\{ (N_k)^{d/\alpha} / (\log N_{k+1})^{2/\alpha}\}^{(1+2\alpha)}}
	\sum_{n = |x|}^{N_{k+1}} \frac{1}{n-|x|+1} \\
	& \,\le\, (const.') \, \frac{(\log N_{k+1})^{2/\alpha + 5}}{N_k^{\epsilon}}
	\,\le\, (const.'') \, \frac{(\log k^{2/\epsilon})^{2/\alpha + 5}}{k^2} \,,
\end{split}
\end{equation*}
from which \eqref{eq:BCH} follows. This completes the step.

\subsection{Proof of \eqref{eq:rweak}}
\label{sec:rweak}

In view of \eqref{eq:wz12}, it is sufficient to prove that
\begin{equation} \label{eq:rweakbis}
	\frac{z_{N}^{(1)}}{N} \, \Longrightarrow \, \rw \,, \qquad \text{where} \qquad
	\bbP( \rw \in \dd x) \,=\, c_\alpha \, (1-|x|)^\alpha \, \ind_{\{|x| \le 1\}} \,
	\, \dd x \,,
\end{equation}
and we recall that $c_\alpha := (\int_{|y| \le 1} (1-|y|)^\alpha \dd y)^{-1}$.

Setting $\phi_N(x) := 1 - \frac{|x|}{N+1}$ and recalling \eqref{eq:pareto},
for $x \in \cB_N$ and $t \in (1,\infty)$ we have
\begin{equation*}
\begin{split}
	\bbP \big( z_{N}^{(1)} = x \,, \ \xi(x) \in \dd t \big) & \,=\,
	\bbP \big( \xi(z) < t \ \forall z \in \cB_N \setminus\{ x\} \,, \ \xi(x) \in \dd t \big) \\
	& \,=\,
	\prod_{z \in \cB_N, \, z \ne x} \left( 1 - \frac{\phi_N(z)^\alpha}{t^\alpha \phi_N(x)^\alpha}
	\right) \, \frac{\alpha}{t^{1+\alpha}} \, \dd t \,,
\end{split}
\end{equation*}
therefore for all function $f: \R^d \to \R$ we can write
\begin{equation} \label{eq:outout}
\begin{split}
	\bbE \left[ f \left(\frac{z_N^{(1)}}{N}\right) \right]
	\,=\, \sum_{x\in\cB_N} f \left( \frac{x}{N} \right) \int_1^\infty \dd t
	\prod_{z \in \cB_N, \, z \ne x} \left( 1 - \frac{\phi_N(z)^\alpha}{t^\alpha \phi_N(x)^\alpha}
	\right) \, \frac{\alpha}{t^{1+\alpha}} \,.
\end{split}
\end{equation}
Now set $t = N^{d/\alpha} s$ and note that as $N \to\infty$,
uniformly in $s \in (\epsilon, \infty)$
and $x \in \cB_{(1-\epsilon)N}$, where $\epsilon > 0$ is arbitrary but fixed,
by a Riemann sum approximation we have
\begin{equation*}
\begin{split}
	\sum_{z \in \cB_N, \, z \ne x} \log \left( 1 - 
	\frac{\phi_N(z)^\alpha}{t^\alpha \phi_N(x)^\alpha} \right) & \,=\,
	- \frac{1}{s^\alpha (1 - \frac{|x|}{N+1})^\alpha N^d} \,
	\sum_{z \in \cB_N, \, z \ne x} 
	\left( 1 - \frac{|z|}{N+1} \right)^\alpha (1+o(1)) \\
	& \,=\, - \frac{c_\alpha^{-1}}{s^\alpha\, (1 - \frac{|x|}{N+1})^\alpha} \, (1+o(1)) \,.
\end{split}
\end{equation*}
Coming back at \eqref{eq:outout} and noting that
$\int_0^\infty \frac{\alpha}{s^{1+\alpha}} \, e^{-A/s^{\alpha}} \, \dd s = 
\int_0^\infty e^{-A u} \, \dd u = A^{-1}$, by a simple change of variables,
it follows again by a Riemann sum argument that if $f$ is continuous and bounded we have
\begin{equation*}
\begin{split}
	\lim_{N\to\infty} \bbE \left[ f \left(\frac{z_N^{(1)}}{N}\right) \right]
	& \,=\, \lim_{N\to\infty} \frac{1}{N^d} \sum_{x\in\cB_N} f \left( \frac{x}{N} \right)
	c_\alpha\, \left( 1-\frac{|x|}{N+1} \right)^{\alpha} \\
	& \,=\, c_\alpha \, \int_{|y| \le 1} f(y) \, (1-|y|)^\alpha \, \dd y \,,
\end{split}
\end{equation*}
proving \eqref{eq:rweakbis}.


\medskip
\section{Path properties}

\label{sec:paths}

In this section we prove Theorem~\ref{th:theopath}, i.e., we show
that $\lim_{N\to\infty}\bP_{N,\xi}(\cC_{N,\xi}) = 1$, $\bbP(\dd\xi)$-almost surely,
where the set $\cC_{N,\xi}$ is defined in \eqref{eq:defc}.

For $i = 1,2$, we denote for simplicity by $\tau_i :=
\inf\{n \in\N:\, S_n = z_N^{(i)}\}$ the first time at which the random walk
visits the site $z_N^{(i)}$ and we set
\begin{equation} \label{eq:DK}
\begin{split}
	\cD_{i,N} &\,:=\, \Big\{ S \in \Omega_S \colon\; \tau_i \le N\,, \
	S_m\neq S_n \ \forall m<n\leq  \tau_{i}\,, \
	S_n = z_N^{(i)} \ \forall n \in \{\tau_{i}, \ldots, N\} \Big\} \\
	\cK_{i,N} &\,:=\, \Big\{ S \in \Omega_S \colon\;
	\tau_{i} \le |z_{N}^{(i)}| + h_N \Big\} \,,
\end{split}
\end{equation}
where we recall that $h_N := (\log\log N)^{2/\alpha}\, N^{1-1/\alpha}$ if $\alpha > 1$
and $h_N := (\log N)^{1+2/\alpha}$ if $\alpha \le 1$.
Recalling the definition \eqref{eq:tildeWN} of the set $\tilde\cW_{i,N}$,
we are going show that for both $i=1,2$
\begin{gather} \label{eq:st1}
	\lim_{N\to\infty}\bP_{N,\xi} \big( \tilde\cW_{i,N} \setminus \cD_{i,N} \big) \,=\, 0\,,
	\qquad \bbP(\dd\xi)\text{-almost surely}\,, \\
	\label{eq:st2}
	\lim_{N\to\infty}\bP_{N,\xi} \big( ( \tilde\cW_{i,N} \cap \cD_{i,N})
	\setminus \cK_{i,N} \big) \,=\, 0\,,
	\qquad \bbP(\dd\xi)\text{-almost surely}\,.
\end{gather}
Recalling relation \eqref{eq:loctildestrong}, proved in the last section,
Theorem~\ref{th:theopath} is a consequence of \eqref{eq:st1} and \eqref{eq:st2}.
The rest of this section is therefore devoted to proving such relations.


\subsection{Step 1: proof of \eqref{eq:st1}}

We fix $i \in \{1,2\}$ throughout the section.
By definition, a random walk trajectory $S \in \cW_{i,N} \setminus \cD_{i,N}$ makes
either some \emph{loops} before time $\tau_i$ (i.e., before
reaching $z_N^{(i)}$) or some \emph{excursions} outside $z_N^{(i)}$ between time $\tau_i$
and time $N$.
We need to set up some notation to account for such loops and excursions.

We set $i_0 = j_0 := -1$ and, for $k \in \N$, we denote by 
$i_k = i_k(S)$, $j_k = j_k(S)$ the extremities of the 
$k$-th loop made by a trajectory $S \in \Omega_S$ before reaching $z_N^{(i)}$:
\begin{align}
\begin{split}
	i_k &:= \inf\big\{ n \in \{ j_{k-1} + 1, \ldots, \tau_i - 1\} \colon\; 
	\exists m \in \{n+1,\dots, \tau_i-1\} \text{ s.t. } S_m = S_n \big\} \,, \\
	j_k &:= \max\{ n < \tau_i \colon\; S_n = S_{i_k}\} \,,
\end{split}
\end{align}
with the usual convention $\inf \emptyset := \infty$.
We also set $\cI_k := \{i_{k}+1,\dots, j_{k}\}$ and $|\cI_k| := j_k - i_k$
for conciseness. Then we denote by
$\cN = \cN(S) := \max\{k \in \N: \, i_k < \infty\}$ the total number
and by $\cL = \cL(S) := \sum_{k=1}^\cN |\cI_k|$ the total length of the loops
of the trajectory~$S$. Note that $\cN = \cL = 0$ if $i_1 = \infty$, i.e.,
if the trajectory $S$ has no loops.
Finally, we denote by $\pi(S)$ the \emph{injective skeleton}
of $S$ before reaching $z_N^{(i)}$, i.e., the random walk trajectory of
$\tau_i - \cL$ steps defined (with some abuse of notation) by
\begin{equation}\label{def:Tn}
	\pi(S) = \{\pi(S)_n\}_{n \in\{0, \ldots, \tau_i - \cL\}}
	:= \{S_n\}_{n\in \{0,\dots,\tau_i\} \setminus \cup_{k=1}^{\cN} \cI_k}.
\end{equation}
We let $\cV_{i,N,r}$ denote the set of 
all $r$-steps injective paths, starting at $0$ and ending at $z_N^{(i)}$,
which do not visit any site $x\in \cB_N$ with $\xi(x)>\xi(z_N^{(i)})$
(recall \eqref{eq:lN}):
\begin{equation}
	\cV_{i,N,r} := \big\{ (S_n)_{n\leq r} \colon\;  
	S_r = z_N^{(i)},\  S_n \neq S_m \ \text{for} \ m \neq n, \
	\ell_r(x)=0 \ \text{when} \ \xi(x) > \xi(z_N^{(i)}) \big\} \,.
\end{equation}
Note that for $S \in \cW_{i,N} \setminus \cD_{i,N}$ we have
$\pi(S) \in \cV_{i,N,\tau_i - \cL(S)}$.

Next we deal with the excursions outside $z_N^{(i)}$.
Set $i'_0=j'_0=\tau_i-1$ and for $k\in\N$ denote by 
$i'_k = i'_k(S)$, $j'_k = j'_k(S)$ the extremities of the $k^{th}$
excursion outside $z_N^{(i)}$ made by the trajectory $S$ between time $\tau_i$
and time $N$:
\begin{align}
\begin{split}
	i'_k &:= \min\big\{ n \in \{j_{k-1}+1, \ldots, N-1\} \colon\; S_n \neq z_N^{(i)} \big\} \,,\\
	j'_k &:= \min\{ n > i'_k \colon\; S_n = z_N^{(i)}\} \,.
\end{split}
\end{align}
We also set $\cI'_k := \{i'_{k}+1, \dots, j'_{k}\}$ and
$|\cI'_k| := j'_k - i'_k$; furthermore, we
denote by $\cN' = \cN'(S) := \max\{k\geq 0 \colon\; i'_k < \infty\}$
the total number and by
$\cL' = \cL'(S) := \sum_{k=1}^{\cN'} |\cI'_k|$ the total length of the excursions
of the trajectory $S$. Note that $\cN' = \cL' = 0$ if
$i'_1 = \infty$, i.e., if there are no excursions.

We can now start with the proof of \eqref{eq:st1}.
Recalling the definition \eqref{eq:model} of our model
and using the notation we have just introduced, we obtain the decomposition
\begin{equation} \label{eq:bianconi}
	\bP_{N,\xi} \big( \tilde\cW_{i,N} \setminus \cD_{i,N} \big)
	\,=\, \frac{1}{U_{N,\xi}} \, \sum_{r=|z_N^{(i)}|}^N \sum_{S^* \in \cV_{i,N,r}}
	\e \big( e^{H_{N,\xi}(S)} \, \ind_{\{S \in \tilde\cW_{i,N} \setminus \cD_{i,N}\}}
	\, \ind_{\{\pi(S) = S^*\}} \big) \,.
\end{equation}
We bound the partition function $U_{N,\xi}$ from below by considering
the trajectories that reach $z_N^{(i)}$ through an injective path,
avoiding the sites $x$ with $\xi(x) > \xi(z_N^{(i)})$, and
stick at $z_N^{(i)}$ afterwards, getting
\begin{equation} \label{eq:lbU}
	U_{N,\xi} \,\ge\, \sum_{r=|z_N^{(i)}|}^N \sum_{S^* \in \cV_{i,N,r}}
	e^{\sum_{n=1}^{r-1} \xi(S^*_n) +
	(N+1-r) \xi(z_N^{(i)})} \, \p(S^*) \, \kappa^{N-r} \,,
\end{equation}
where for simplicity we set $\p(S^*) := \p(S_1 = S_1^*, \ldots, S_r = S_r^*)$
and we recall \eqref{eq:rw0}.

Next we estimate the double sum in the right hand side of \eqref{eq:bianconi}.
Observe that for $S \in \tilde\cW_{i,N} \setminus \cD_{i,N}$ we have
$\cL + \cL' \ge 1$, because $S$ must make at least one loop before reaching
$z_N^{(i)}$ or one excursion outside $z_N^{(i)}$ before time $N$.
By definition of $\cW_{i,N}$, cf. \eqref{eq:tildeWN}, any site $x$ visited
by $S$ in the loops or excursions has an associated potential $\xi(x) < \xi(z_N^{(i)})$,
hence $\xi(x) \le X_N^{(J_i + 1)} = \xi(z_N^{(i)}) -(X_N^{(J_i)} - X_N^{(J_i + 1)})$,
cf. \eqref{eq:defim}. It follows that on $\{\cL = l, \cL' = l'\}$ we have
$H_N(S) \le \sum_{n=1}^{r-1} \xi(S^*_n) + (N+1-r) \xi(z_N^{(i)}) - (l+l')(X_N^{(J_i)} - X_N^{(J_i + 1)})$, hence
\begin{equation*}
\begin{split}
	& \e \big( e^{H_{N,\xi}(S)} \, \ind_{\{S \in \tilde\cW_{i,N} \setminus \cD_{i,N}\}}
	\, \ind_{\{\pi(S) = S^*\}} \big) \\
	& \ \,\le\, \sum_{l, l' \in \N_0,\, l+l' \ge 1}
	e^{\sum_{n=1}^{r-1} \xi(S^*_n) + (N+1-r) \xi(z_N^{(i)}) - (l+l')(X_N^{(J_i)} - X_N^{(J_i + 1)})}
	\, \p(\cL = l, \cL' = l', \pi(S) = S^*) \,.
\end{split}
\end{equation*}
Looking back at \eqref{eq:bianconi} and \eqref{eq:lbU}, we conclude that
\begin{equation} \label{eq:dente}
\begin{split}
	& \bP_{N,\xi} \big( \tilde\cW_{i,N} \setminus \cD_{i,N} \big) \\
	& \ \,\le\, \suptwo{r \in \{|z_N^{(i)}|, \ldots, N\}}{S^* \in \cV_{i,N,r}} \
	\sum_{l, l' \in \N_0,\, l+l' \ge 1} e^{- (l+l')(X_N^{(J_i)} - X_N^{(J_i + 1)})} \,
	\frac{\p(\cL = l, \cL' = l', \pi(S) = S^*)}{\p(S^*) \, \kappa^{N-r}} \,.
\end{split}
\end{equation}

We are left with estimating the ratio in the right hand side of \eqref{eq:dente}.
It is convenient to disintegrate the
event $\{\cL = l\}$ (resp. $\{\cL' = l'\}$) by summing on the total number
$\cN$ and the locations $\cI = \{\cI_k\}_{k \le \cN}$ of the loops
(resp. the total number
$\cN'$ and the locations $\cI' = \{\cI'_k\}_{k \le \cN}$ of the excursions).
Using the Markov property and bounding the probability of each loop
and excursion (trivially) by $1$, for all $n, I = \{I_k\}_{k \le n}$,
$n', I' = \{I'_k\}_{k \le n}$ and
for all injective trajectories $S^* \in \cV_{i,N,r}$ we have
\begin{equation*}
	\p(\cN = n, \cI = I, \cN' = n', \cI' = I', \pi(S) = S^*)
	\le \p(S^*) \, \kappa^{N-r-l-l'} \,,
\end{equation*}
because $|\{n \in \{\tau_i, \ldots, N-1\}: S_n = S_{n+1}\}|
= N - \tau_i - \cL'$, by definition of $\cL'$,
and $\tau_i = r + \cL$ when $\pi(S) = S^* \in \cV_{i,N,r}$,
by definition of $\cL$. It follows that
\begin{equation*}
	\frac{\p(\cL = l, \cL' = l', \pi(S) = S^*)}{\p(S^*) \, \kappa^{N-r}}
	\le \kappa^{-l-l'} \cdot \textstyle \big| \big\{ (n, I, n', I'): 
	\, \sum_{k=1}^n |I_k| = l,\, \sum_{k=1}^{n'} |I'_k| = l' \big\} \big| \,.
\end{equation*}
It remains to bound the cardinality of the set in the right hand side.
For fixed $n \in \{0, \ldots, l\}$, the intervals $I = \{I_k\}_{k \le n}$ consist of $2n$
points in $\{0, \ldots, \tau_i\} \subseteq \{0, \ldots, N\}$, therefore
the number of possible choices for $I$ is bounded from above by $(N+1)^{2n} \le (N+1)^{2l}$.
Analogously, for every $n' \in \{0, \ldots, l'\}$,
the number of choices for $I'$ is bounded from above by
$(N+1)^{2n'} \le (N+1)^{2l'}$. Looking back at \eqref{eq:dente}, we can write
\begin{equation*}
\begin{split}
	\bP_{N,\xi} \big( \tilde\cW_{i,N} \setminus \cD_{i,N} \big) & \,\le\,
	\sum_{l, l' \in \N_0,\, l+l' \ge 1} 
	e^{- (l+l') ( X_N^{(J_i)} - X_N^{(J_i + 1)} + \log \kappa
	- 2 \log (N+1) )} \, (l+1) \, (l'+1) \\
	& \,\le\, (const.) \sum_{m=1}^\infty 
	e^{- m ( X_N^{(J_i)} - X_N^{(J_i + 1)} + \log \kappa
	- 2 \log (N+1) )} \, m^3\\
	& \,\le\, (const.') \, \frac{e^{- ( X_N^{(J_i)} - X_N^{(J_i + 1)} + \log \kappa
	- 2 \log (N+1) )}}{\big( 1 - e^{- ( X_N^{(J_i)} - X_N^{(J_i + 1)} + \log \kappa
	- 2 \log (N+1) )}\big)^4} \,,
\end{split}
\end{equation*}
where in the second inequality we have used that
$\sum_{l, l' \in \N_0:\, l + l' = m} (l+1)(l'+1) \le (const.) m^3$.
It then follows from Corollary~\ref{th:boundJ} and Proposition~\ref{th:gap15}
that relation \eqref{eq:st1} holds true, completing the first step.

\subsection{Step 2: proof of \eqref{eq:st2}}

Throughout the section we fix $i \in \{1,2\}$. We recall that $\tau_i :=
\inf\{n \in\N:\, S_n = z_N^{(i)}\}$ denotes the first time at which the random walk
visits $z_N^{(i)}$. 

A random walk trajectory $S \in \tilde\cW_{i,N} \cap \cD_{i,N}$
(cf. \eqref{eq:tildeWN} and \eqref{eq:DK}) reaches $z_N^{(i)}$ through an injective
path, avoiding sites where the potential is larger than $\xi(z_N^{(i)})$, and
sticks at $z_N^{(i)}$ afterwards (from time $\tau_i$ to time $N$). 
Therefore the corresponding Hamiltonian (cf. \eqref{eq:model}) is bounded from above by
\begin{equation*}
	H_{N,\xi}(S) \,\le\, \sum_{n=1}^{\tau_i-1} \xi(S_i) \,+\,
	(N+1-\tau_i) \xi(z_N^{(i)}) \,\le\, \sum_{j=1}^{N} X_N^{(j)} \,+\,
	(N+1-\tau_i) \xi(z_N^{(i)}) \,.
\end{equation*}
Recalling the definition \eqref{eq:DK} of the set $\cK_{i,N}$, for
$S \in (\tilde\cW_{i,N} \cap \cD_{i,N}) \setminus \cK_{i,N}$
we obtain
\begin{equation*}
	H_{N,\xi}(S) \,\le\, \sum_{j=1}^{N} X_N^{(j)} \,+\,
	\big( N+1-|z_N^{(i)}| - h_N \big) \xi(z_N^{(i)}) \,,
\end{equation*}
therefore, cf. \eqref{eq:model},
\begin{equation*}
	\bP_{N,\xi} \big( (\tilde\cW_{i,N} \cap \cD_{i,N}) \setminus \cK_{i,N} \big)
	\,\le\, \frac{1}{U_{N,\xi}} \, e^{\sum_{j=1}^{N} X_N^{(j)} \,+\,
	( N+1-|z_N^{(i)}| - h_N ) \xi(z_N^{(i)})} \,.
\end{equation*}
As usual, we obtain a lower bound on $U_{N,\xi}$ by considering a single trajectory
that reaches the site $z_N^{(i)}$ in $|z_N^{(i)}|$ steps and sticks there afterwards, getting
\begin{equation*}
	U_{N,\xi} \,\ge\, e^{(N+1-|z_N^{(i)}|) \xi(z_N^{(i)})} \, c^N \,,
\end{equation*}
for a suitable $c > 0$, cf. \eqref{eq:rw0}. Note that $\xi(z_N^{(i)}) \ge
Z_N^{(i)} \ge N^{d/\alpha}/(\log \log N)^{3/2\alpha}$
eventually $\bbP(\dd\xi)$-almost surely, for both $i \in \{1,2\}$, by
relation \eqref{eq:lbZ1}. Therefore
\begin{equation*}
	\bP_{N,\xi} \big( (\tilde\cW_{i,N} \cap \cD_{i,N}) \setminus \cK_{i,N} \big)
	\,\le\, e^{\sum_{j=1}^{N} X_N^{(j)} \,-\, h_N \, N^{d/\alpha} /(\log \log N)^{3/2\alpha}} \,.
\end{equation*}
Since $h_N := (\log\log N)^{2/\alpha}\, N^{1-1/\alpha}$ if $\alpha > 1$
and $h_N := (\log N)^{1+2/\alpha}$ if $\alpha \le 1$, it follows from \eqref{eq:sumbou}
that $\bP_{N,\xi} \big( (\tilde\cW_{i,N} \cap \cD_{i,N}) \setminus \cK_{i,N} \big) \to 0$
as $N\to\infty$, $\bbP(\dd\xi)$-almost surely. This proves that \eqref{eq:st2}
holds true and completes the second step.


\newcommand{\znk}{Z_N^{(k)}}
\newcommand{\znu}{Z_N^{(1)}}
\newcommand{\umd}{1 -\delta}
\newcommand{\umdp}{(1 -\delta)}
\def\prob#1{{{\bbP}\left ( {#1} \right )}}
\def\etp#1{{\left ( {#1} \right )}}
\def\esp#1{{\bbE\left [ {#1} \right ]}}
\def\valabs#1{{\left | {#1} \right |}}
\def\ens#1{{\left \{ {#1} \right \}}}
\def\un#1{1_{\ens{#1}}}
\def\unsur#1{\frac{1}{{#1}}}
\let\Z=\bbZ
\newcommand{\mac}{M_{A^c}}

\bigskip

\appendix

\section{Order statistics for the field}
\label{ap:field}


This section is devoted to the order statistics
$X_N^{(1)}, \ldots, X_N^{(|\cB_N|)}$ of the field
$\{\xi(x)\}_{x \in \cB_N}$.
We first give some basic probability estimates,
from which the proofs of Lemma~\ref{th:ubX1}
and Proposition~\ref{th:gap15} will be deduced.


\subsection{Basic estimates}

We start comparing the relative sizes of $X_N^{(k)}$ and $X_N^{(p)}$.


\begin{lemma}\label{le:lem}
For all $N,p,k \in \N$ with $1 \le p < k \le |\cB_N|$ and for all
$\delta \in (0,1)$ we have
\begin{equation} \label{eq:Xk}
        \bbP \big( X_N^{(k)}\geq (1-\delta) X_N^{(p)} \big) \le
        \binom{k-1}{k-p} \, (1-(1-\delta)^\alpha)^{k-p} \,.
\end{equation}
In the special case $p=1$ the equality holds:
\begin{equation} \label{eq:Xk1}
        \bbP \big( X_N^{(k)}\geq (1-\delta) X_N^{(1)} \big) =
        (1-(1-\delta)^\alpha)^{k-1} \,.
\end{equation}
\end{lemma}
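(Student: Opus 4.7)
The plan is to reduce the statement to a direct integral computation using uniform order statistics. First I would set $V_i := \xi_i^{-\alpha}$ for $i \in \cB_N$: by \eqref{eq:pareto}, the $V_i$ are i.i.d. Uniform$(0,1)$, and the decreasing order statistics of the field correspond to the increasing order statistics of the $V_i$ via $X_N^{(i)} = (V_N^{(i)})^{-1/\alpha}$. Writing $c := (1-\delta)^{-\alpha} \ge 1$ and $n := |\cB_N|$, the event becomes $\{V_N^{(k)} \le c\, V_N^{(p)}\}$, and the classical joint density
$$f_{V_N^{(p)},V_N^{(k)}}(u,v) \,=\, \frac{n!}{(p-1)!\,(k-p-1)!\,(n-k)!} \, u^{p-1}(v-u)^{k-p-1}(1-v)^{n-k}, \qquad 0 < u < v < 1,$$
expresses the probability as a double integral over $\{0 < u < v < 1,\, v \le cu\}$.

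Next I would swap the order of integration, so that for fixed $v \in (0,1)$ the variable $u$ ranges over $(v/c, v)$. Applying the substitution $w = v-u$ followed by $w = v(c-1)t/c$ with $t \in (0,1)$ rewrites the inner integral as
$$\int_{v/c}^{v} u^{p-1}(v-u)^{k-p-1}\,du \,=\, v^{k-1}(1-\mu)^{k-p} \int_0^1 \bigl(1-(1-\mu)t\bigr)^{p-1} \, t^{k-p-1}\,dt,$$
with $\mu := 1/c = (1-\delta)^\alpha$. The key estimate is then the trivial bound $\bigl(1-(1-\mu)t\bigr)^{p-1} \le 1$, valid because the base lies in $(0,1]$ and the exponent is nonnegative; it reduces the residual integral to $1/(k-p)$. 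Combining this with $\int_0^1 v^{k-1}(1-v)^{n-k}\,dv = B(k, n-k+1) = (k-1)!(n-k)!/n!$ and simplifying the factorials yields exactly $\binom{k-1}{k-p}\bigl(1-(1-\delta)^\alpha\bigr)^{k-p}$, which is \eqref{eq:Xk}.

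The equality \eqref{eq:Xk1} in the case $p=1$ is then automatic from the same calculation: the factor $\bigl(1-(1-\mu)t\bigr)^{p-1}$ equals $1$ identically when $p=1$, so the pointwise inequality is actually an equality and no slack is introduced. I do not anticipate a substantive obstacle: the argument is algebraic bookkeeping, one change of variables, and one trivial pointwise inequality. The only point requiring a little care is the swap of integration order, where the conditions $u < v$ and $v \le cu$ force $u \in (v/c, v)$, an interval which is nonempty precisely because $c > 1$ (guaranteed by $\delta \in (0,1)$).
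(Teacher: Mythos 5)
Your proof is correct and takes a genuinely different route from the paper. You reduce everything to the joint density of two order statistics of i.i.d.\ Uniform$(0,1)$ variables (after the transformation $V_i = \xi(i)^{-\alpha}$), and then you carry out a direct integral computation with a change of variables and the pointwise bound $(1-(1-\mu)t)^{p-1}\le 1$. The paper instead works combinatorially on the original field: it sums over the (random) location $A$ of the top $k-1$ sites and the sublocation $B \subseteq A$ of the sites carrying $X_N^{(p)}, \ldots, X_N^{(k-1)}$, uses that $1/\xi(x)^\alpha$ is uniform to bound the conditional probability that all $\xi(y)$, $y\in B$, fall in $(M_{A^c}, (1-\delta)^{-1}M_{A^c})$, and finally shows that the remaining sum $\sum_{A}\bbE(M_{A^c}^{-\alpha(k-1)})$ equals $1$ by a self-normalization (telescoping) identity. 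The two arguments identify the same factor $(1-(1-\delta)^\alpha)^{k-p}$ and the same combinatorial coefficient $\binom{k-1}{k-p}$, and both isolate the $p=1$ equality by noting that the only source of slack (your factor $(1-(1-\mu)t)^{p-1}$, the paper's replacement of $M_B$ by $M_{A^c}$, which requires $A\setminus B\neq\emptyset$) disappears when $p=1$. Your approach buys a short, mechanical computation at the cost of invoking the explicit two-point joint density for order statistics; the paper's approach avoids densities entirely, working only with events and a normalization identity, which fits better with the style of the rest of its appendix (where $M_{A^c}$ reappears repeatedly). Both are complete.
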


\begin{proof}
We introduce the shortcuts $M_A := \sup_{x\in A} \xi(x)$,
$\{X_N^{(m) \cdots (n)}\} := \{X_N^{(m)}, \ldots, X_N^{(n)}\}$
and $A^c := \cB_N \setminus A$ for convenience.
We recall that $\cB_N = \{z \in \Z^d : \valabs{z} \le N\}$.
Summing over the location of the subsets
$\{X_N^{(1) \cdots (k-1)}\} = A$ and $\{X_N^{(p) \cdots (k-1)}\} = B$,
so that $X_N^{(k)} = M_{A^c}$ and
$X_N^{(p)} = M_{(A \setminus B)^c}$, we can write
\begin{align*}
        & \bbP(X_N^{(k)}\geq (1-\delta) X_N^{(p)} )  \\
        & \;=\;         
	\sumtwo{A \subseteq \cB_N, \, |A| = k-1}{B \subseteq A,\, |B|=k-p}
        \bbP \left( X_N^{(k)}\geq (1-\delta) X_N^{(p)} , \
	\{X_N^{(1) \cdots (k-1)}\} = A, \ 
	\{X_N^{(p) \cdots (k-1)}\} = B \right) \\
        & \;=\;         
	\sumtwo{A \subseteq \cB_N, \, |A| = k-1}{B \subseteq A,\, |B|=k-p}
        \bbP \left( M_{A^c} < \xi(y) < \frac{1}{1-\delta} M_{A^c} \ \forall y \in B \,,
        \ \ \xi(z) > M_B \ \forall z \in A \setminus B \right) \,.
\end{align*}
Since $M_B \ge M_{A^c}$ on the event we are considering,
we can replace $M_B$ by $M_{A^c}$ and obtain the upper bound
\begin{align*}
        \bbP(X_N^{(k)}\geq (1-\delta) X_N^{(p)} )
        \;\le\; \sumtwo{A \subseteq \cB_N, \, |A| = k-1}
	{B \subseteq A,\, |B|=k-p} \bbP \bigg( \frac{(1-\delta)^\ga}
	{(M_{A^c})^\ga} < \frac{1}{\xi(y)^\ga}
        <  \frac{1}{(M_{A^c})^\ga} \ \forall y \in B \,, \ \ & \\
        \frac{1}{\xi(z)^\ga} < \frac{1}{(M_{A^c})^\ga} 
        \ \forall z \in A \setminus B & \bigg) \,.
\end{align*}
We stress that in the special case $p=1$ we have $A = B$,
so that $A \setminus B = \emptyset$ and therefore the
above inequality is an equality.

By assumption the field $\xi(\cdot)$ has a Pareto distribution 
with parameter $\alpha >0$, cf. \eqref{eq:pareto},
therefore $\frac{1}{\xi^\alpha}$ 
is uniformly distributed on the interval $(0,1)$:
$\bbP(a < \frac{1}{\xi} < b) = b-a$ for all $0 < a < b  <1$.
It follows that
\begin{align*}
        \bbP(X_N^{(k)} \geq & (1-\delta) X_N^{(p)}) \;\le\;
        (1-(1-\delta)^\ga)^{k-p} \, 
        \sumtwo{A \subseteq \cB_N, \, |A| = k-1}{B \subseteq A,\, |B|=k-p}
        \bbE \left( \frac{1}{(M_{A^c})^{\ga(k-1)}} \right) \\
        & \;\le\; \binom{k-1}{k-p} \, (1-(1-\delta)^\ga)^{k-p} \, 
        \sum_{A \subseteq \cB_N, \, |A| = k-1}
        \bbE \left( \frac{1}{(M_{A^c})^{\ga(k-1)}} \right) \,,
\end{align*}
and again all these inequalities are equalities if $p=1$.
It only remains to check that the last sum equals one.
To this purpose, note that for all $\ell \in \N$,
summing on the location of the set $\{X_N^{(1) \cdots (\ell)}\}$,
we can write
\begin{align*}
        1 & \;=\; \sum_{A \subseteq \cB_N\,, \, |A|=\ell}
        \bbP( \{X_N^{(1) \cdots (\ell)}\} = A ) \;=\;
        \sum_{A \subseteq \cB_N\,, \, |A|=\ell}
        \bbP( \xi(x) > M_{A^c} \ \forall x \in A ) \\
        & \;=\; \sum_{A \subseteq \cB_N\,, \, |A|=\ell}
        \bbP \left( \frac{1}{\xi(x)^\ga} < \frac{1}{(M_{A^c})^\ga} \ \forall x \in A \right)
        \;=\; \sum_{A \subseteq \cB_N\,, \, |A|=\ell}
        \bbE \left( \frac{1}{(M_{A^c})^{\ga\ell}} \right).\qedhere
\end{align*}
\end{proof}

\medskip

Next we give some bounds on the absolute size of $X_N^{(k)}$.

\begin{lemma}
Let $c, C > 0$ be such that $c \le \frac{|\cB_N|}{N^d} \le C$.
Then for all $k \in \{1, \ldots, |\cB_N|\}$ and $t \in (0,\infty)$ 
the following relations hold:
\begin{gather} \label{eq:XN1t}
        \bbP(X_N^{(k)} > N^{d/\ga} t) \;\le\; \frac{C^k}{(k-1)!}
	\, \frac{1}{t^{k\ga}} \,, \\
	\label{eq:XNkt}
        \bbP(X_N^{(k)} \le t N^{d/\ga}) \;\le\; e^{-\frac{c}{t^\ga}} 
        \, \sum_{m=0}^{k-1} \frac{1}{m!} \, 
        \left(\frac{eC}{t^{\ga}}\right)^m \,.
\end{gather}
\end{lemma}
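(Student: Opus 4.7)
The plan is to reduce both inequalities to elementary estimates on a single binomial random variable. Set $B_s := \#\{x \in \cB_N : \xi(x) > s\}$; under $\bbP$, this has law $\mathrm{Bin}(|\cB_N|, p_s)$ where $p_s := \bbP(\xi(0) > s) = s^{-\alpha}$ for $s \ge 1$ by \eqref{eq:pareto}. The basic observation is the tautology
\[
  \{X_N^{(k)} > s\} = \{B_s \ge k\}, \qquad \{X_N^{(k)} \le s\} = \{B_s \le k-1\}.
\]
Choosing $s = tN^{d/\alpha}$ (and assuming $s \ge 1$, i.e.\ $t \ge N^{-d/\alpha}$, with the opposite regime handled trivially at the end), the hypothesis $cN^d \le |\cB_N| \le CN^d$ yields $c/t^\alpha \le |\cB_N|\,p_s \le C/t^\alpha$, which is the only arithmetic input needed.

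For \eqref{eq:XN1t} I would apply the standard union bound over $k$-subsets: since the event $\{B_s \ge k\}$ is the union over $k$-subsets $A \subseteq \cB_N$ of the event that $\xi(x) > s$ for every $x \in A$, we get $\bbP(B_s \ge k) \le \binom{|\cB_N|}{k} p_s^k$. Bounding $\binom{n}{k} \le n^k/(k-1)!$ and using $|\cB_N|\,p_s \le C/t^\alpha$ gives the stated bound directly.

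For \eqref{eq:XNkt} I would expand
\[
  \bbP(B_s \le k-1) \;=\; \sum_{m=0}^{k-1} \binom{|\cB_N|}{m} p_s^m (1-p_s)^{|\cB_N|-m}.
\]
Using $\binom{n}{m} p_s^m \le (|\cB_N|\,p_s)^m/m!$ and $1-p_s \le e^{-p_s}$, each summand is at most $(|\cB_N|\,p_s)^m/m! \cdot \exp(-(|\cB_N|-m)p_s)$. The extraction of the prefactor $e^{-c/t^\alpha}$ — the only place where care is needed — is done by splitting $-(|\cB_N|-m)p_s = -|\cB_N|\,p_s + m\,p_s$ and using $p_s \le 1$ (which holds when $s \ge 1$) so that $e^{mp_s} \le e^m$. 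Combined with $|\cB_N|\,p_s \ge c/t^\alpha$ and $|\cB_N|\,p_s \le C/t^\alpha$, this gives the factor $(eC/t^\alpha)^m/m! \cdot e^{-c/t^\alpha}$ claimed in \eqref{eq:XNkt}.

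Finally, the edge case $tN^{d/\alpha} < 1$ is degenerate: all field values exceed $s$ almost surely (by \eqref{eq:pareto} the support of $\xi(0)$ is $[1,\infty)$), so $B_s = |\cB_N|$ and the left-hand side of \eqref{eq:XNkt} is identically zero, while the right-hand side of \eqref{eq:XN1t} exceeds $1$ for $N$ large enough. There is no conceptual obstacle in this lemma; it is a bookkeeping exercise, and the only subtlety is the slightly non-obvious way the $e^{-c/t^\alpha}$ factor emerges from the $(1-p_s)^{|\cB_N|-m}$ term.
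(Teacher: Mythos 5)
Your proof is correct, and one of its two halves departs from the paper in a genuinely simpler way. For \eqref{eq:XNkt} you reproduce essentially the paper's argument: identify $\{X_N^{(k)} \le s\}$ with $\{B_s \le k-1\}$ for the exceedance count $B_s$, expand the binomial CDF, and extract $e^{-c/t^\alpha}$ by the split $-(|\cB_N|-m)p_s = -|\cB_N|p_s + mp_s$ together with $p_s\le 1$; this is the same bookkeeping. For \eqref{eq:XN1t}, however, the paper does not use the direct binomial tail bound $\bbP(B_s\ge k)\le\binom{|\cB_N|}{k}p_s^k$. Instead it conditions on which set $A$ of size $k-1$ carries the top $k-1$ order statistics, rewrites the probability as $\sum_A \bbE\big[M_{A^c}^{-\alpha(k-1)}\ind_{\{M_{A^c}>N^{d/\alpha}t\}}\big]$ via the observation that $1/\xi^\alpha$ is uniform on $(0,1)$, and then peels off factors step by step, ultimately reducing to the $k=1$ case. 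Your union-bound route is shorter and more elementary, and it yields the stated constant $C^k/(k-1)!$ comfortably (indeed it gives the slightly stronger $C^k/k!$). The paper's heavier machinery in this proof is presumably chosen for uniformity with the proof of Lemma~\ref{le:lem}, where the uniform-distribution-of-$1/\xi^\alpha$ decomposition is actually needed to obtain an equality in the case $p=1$; for the one-sided bound \eqref{eq:XN1t} that machinery is overkill, as your argument shows. One small imprecision: in the edge case $tN^{d/\alpha}<1$ you say the right-hand side of \eqref{eq:XN1t} ``exceeds $1$ for $N$ large enough,'' but since the lemma is asserted for every $N$ you should note (as the paper does) that $(k-1)!\le k^k\le|\cB_N|^k\le(CN^d)^k$ gives $C^k/((k-1)!\,t^{k\alpha})\ge (N^{d/\alpha}t)^{-\alpha k}\ge 1$ for all $N$, with no ``large $N$'' needed.
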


\begin{proof}
Throughout the proof we shall assume
that $t \ge N^{-d/\alpha}$.
In fact, for $t < N^{-d/\alpha}$ there is nothing to prove,
because the left hand side of \eqref{eq:XNkt} is zero
(recall that the field $\xi(\cdot)$
is bounded from below by one, cf. \eqref{eq:pareto})
and the right hand side of \eqref{eq:XN1t}
is greater than one: in fact, for $k \le |\cB_N|$ we have
$(k-1)! \le k^k \le |\cB_N|^k \le (C N^{d})^k$ 
and therefore for $t < N^{-d/\alpha}$
\begin{equation*}
	\frac{C^k}{k!} \, \frac{1}{t^{k\ga}} \ge
	\frac{C^k}{(C N^{d})^k} \, \frac{1}{t^{k\ga}}
	= \frac{1}{(N^{d/\alpha} t)^\alpha} \ge 1 \,.
\end{equation*}

We start proving \eqref{eq:XN1t}. The case $k=1$ is easy:
\begin{equation*}
        \bbP(X_N^{(1)} \le N^{d/\ga} t) \;=\; \bbP( \xi(x) \le N^{d/\ga} t \ \forall x \in \cB_N)
        \;=\; \left(1 - \frac{1}{t^\ga N^d} \right)^{|\cB_N|} \,,
\end{equation*}
and since $(1-z)^a \ge 1 - az$ for $a \ge 1$ and $z \in [0,1]$ we obtain
\begin{equation} \label{eq:uno}
        \bbP(X_N^{(1)} > N^{d/\ga} t) \;=\; 
	1 - \left(1 - \frac{1}{t^\ga N^d} \right)^{|\cB_N|}
        \; \le \; \frac{|\cB_N|}{N^d} \frac{1}{t^\ga}
	\;\le\; \frac{C}{t^\alpha} \,.
\end{equation}
For the general case, summing over the location of the set
$\{X_N^{(1) \cdots (k-1)}\} := \{X_N^{(1)}, \ldots, X_N^{(k-1)}\}$
and recalling the shortcuts $M_A := \sup_{x\in A} \phi(x)$
and $A^c := \cB_N \setminus A$ we get
\begin{equation*}
\begin{split}
        \bbP(X_N^{(k)} > N^{d/\ga} t) & \;=\; 
	\sum_{A \subseteq \cB_N,\, |A| = k-1}
	\bbP(X_N^{(k)} > N^{d/\ga} t , \
	\{X_N^{(1) \cdots (k-1)}\} = A) \\
	& \;=\; 
	\sum_{A \subseteq \cB_N,\, |A| = k-1}
	\bbP(M_{A^c}  > N^{d/\alpha} t, \
	\xi(x) > M_{A^c} \ \forall x \in A) \\
	& \;=\; 
	\sum_{A \subseteq \cB_N,\, |A| = k-1}
	\bbP \left( M_{A^c}  > N^{d/\alpha} t, \
	\frac{1}{\xi(x)^\alpha} < \frac{1}{M_{A^c}^\alpha}
	\ \forall x \in A \right) \,.
\end{split}
\end{equation*}
We have already remarked that the random variables
$1/\xi(x)^\alpha$ are uniformly distributed over the interval
$(0,1)$, that is $\bbP(\frac{1}{\xi(x)^\alpha} \le s ) = s$ for
$s \in (0,1)$. Then with some easy bounds we obtain
\begin{equation*}
\begin{split}
        & \bbP ( X_N^{(k)} > N^{d/\ga} t) \;=\; 
	\sumtwo{A \subseteq \cB_N}{|A| = k-1}
	\bbE \left( \frac{1}{M_{A^c}^{\alpha(k-1)}} \,,
	M_{A^c}  > N^{d/\alpha} t \right) \\
	& \ \;\le\; 
	\frac{1}{N^{d(k-1)} t^{\alpha(k-1)}}
	\sumtwo{A \subseteq \cB_N}{|A| = k-1}
	\bbP ( M_{A^c}  > N^{d/\alpha} t )
	\;\le\; 
	\frac{1}{N^{d(k-1)} t^{\alpha(k-1)}}
	\sumtwo{A \subseteq \cB_N}{|A| = k-1}
	\bbP ( X_N^{(1)}  > N^{d/\alpha} t ) \,.
\end{split}
\end{equation*}
where we have used that $\bbP ( M_{A^c}  > N^{d/\alpha} t ) \le 
\bbP ( X_N^{(1)}  > N^{d/\alpha} t )$ for all $A \subseteq \cB_N$.
Since $\binom{n}{m} \le n^m/m!$ and $|\cB_N|\le C N^d$, we obtain
\begin{equation*}
\begin{split}
        \bbP ( X_N^{(k)} > N^{d/\ga} t) & \;\le\; 
	\frac{1}{N^{d(k-1)} t^{\alpha(k-1)}} \,
	\binom{|\cB_N|}{k-1} \,
	\bbP ( X_N^{(1)}  > N^{d/\alpha} t ) \\
	& \;\le\; \frac{1}{N^{d(k-1)} t^{\alpha(k-1)}} 
	\, \frac{|\cB_N|^{k-1}}{(k-1)!}
	\, \frac{C}{t^\alpha} \le
	\frac{C^k}{(k-1)!} \, \frac{1}{t^{\alpha k}} \,,
\end{split}
\end{equation*}
having applied \eqref{eq:uno}. Equation \eqref{eq:XN1t} is proved.

To prove \eqref{eq:XNkt}, note that the random variable
$Y := \#\{z \in \cB_N:\, \xi(z) > t N^{d/\ga} \}$ is binomial
$B(n,p)$ with parameters $n=|\cB_N|$ and 
$p = \bbP(\xi > t N^{d/\ga}) =  1/(t^\ga N^d)$, therefore
\begin{equation} \label{eq:baraba}
\begin{split}
        \bbP(X_N^{(k)} \le t N^{d/\ga}) &\;=\; \bbP(Y \le k-1) \;=\;
        \sum_{m=0}^{k-1} \binom{n}{m} p^m \, (1-p)^{n-m} \\
        & \;=\; \sum_{m=0}^{k-1} \binom{|\cB_N|}{m} 
	\left( \frac{1}{t^\ga N^d} \right)^m
        \, \left( 1 - \frac{1}{t^\ga N^d} \right)^{|\cB_N|-m} \,.
\end{split}
\end{equation}
Using the estimates $(1-x)^a \le e^{-ax}$ and $\binom{n}{m} \le n^m/m!$ we get
\begin{align*}
	\bbP(X_N^{(k)} \le t N^{d/\ga}) &\;\le\; 
	e^{-\frac{|\cB_N|}{N^d} \frac{1}{t^\ga}}
        \sum_{m=0}^{k-1} \frac{1}{m!} \, \frac{1}{t^{\ga m}}
        \left( \frac{|\cB_N|}{N^d} \, e^{\frac1{t^\ga N^d}} \right)^{m} \,,
\end{align*}
from which \eqref{eq:XNkt} follows, recalling that
$|\cB_N| \ge c N^d$ and $1/(t^\ga N^d) \le 1$ by assumption.
\end{proof}

\medskip

We are finally ready for the proof of Lemma~\ref{th:ubX1}
and Proposition~\ref{th:gap15}, to which are devoted
the next paragraphs. For convenience, the proof of Lemma~\ref{th:ubX1}
has been split in two parts,
in which we consider each equation separately.


\smallskip

\subsection{Proof of Lemma~\ref{th:ubX1}}
\label{sec:ubX1proof}

We start considering equation \eqref{eq:ubX1}.
Let us set $N_k := 2^k$. By \eqref{eq:XN1t} we have
\begin{equation*}
        \sum_{k\in\N} \bbP(X_{N_k}^{(1)} > (N_k)^{d/\ga}
        \, (\log N_k)^{1/\ga+\gep/2}) \le \frac{C}{(\log 2)^{1+\ga \gep/2}}
        \sum_{k\in\N} \frac{1}{k^{1+\ga \gep/2}} < \infty \,,
\end{equation*}
and by \eqref{eq:XNkt}
\begin{equation*}
\begin{split}
	\sum_{k\in\N}
        \bbP(X_{N_k}^{(1)} \le (N_k)^{d/\ga}
        \, (\log \log N_k)^{-1/\alpha -\gep/2})
        & \le \sum_{k\in\N}
	\exp\left( - c (\log \log N_k)^{1 + \gep\alpha/2} \right) \\
	& = \sum_{k\in\N} \frac{1}{(k \, \log 2)^{c (\log \log 2 + \log k)^{\epsilon \alpha/2}}} 
		< \infty \,,
\end{split}
\end{equation*}
because for large $k$ the exponent $c (\log \log 2 + \log k)^{\epsilon \alpha/2}$
exceeds $1$.
By the Borel-Cantelli lemma, it follows that eventually (in $k$) $\bbP$-a.s.
\begin{equation} \label{eq:outil}
        \frac{(N_k)^{d/\ga}}{(\log\log N_k)^{1/\alpha + \epsilon/2}} \le
	X_{N_k}^{(1)} \le (N_k)^{d/\ga} \,
        (\log N_k)^{1/\ga + \gep/2} \,.
\end{equation}
Now take a generic $N\in\N$ and set
$k := \lfloor \log_2(N) \rfloor$, so that 
$N_k \le N < N_{k+1}$. Observe that
$X_{N_k}^{(1)} \le X_N^{(1)} \le X_{N_{k+1}}^{(1)}$,
because $X_N^{(1)}$ is increasing in $N$.
Plainly, one has $N_{k+1} \le 2 N$,
$N_k \ge \frac 12 N$, $\log N_k \le \log N$ and
$\log N_{k+1} \le \log 2 + \log N \le 2 \log N$ (for large $N$).
Then it follows from \eqref{eq:outil} that for large $N$
\begin{equation*}
        2^{- d/\alpha} \frac{N^{d/\alpha}}{(\log\log N)^{\epsilon/2}}
	\le X_{N_k}^{(1)} \le
        X_N^{(1)} \le X_{N_{k+1}}^{(1)} \le 2^{d/\ga + 1/\ga + \gep/2} \,
        N^{d/\ga} \, (\log N)^{1/\ga + \gep/2}\,.
\end{equation*}
Equation \eqref{eq:ubX1} follows observing that
$2^{d/\alpha} \le (\log \log N)^{\epsilon/2}$ and
$2^{d/\ga + 1/\ga + \gep/2} \le (\log N)^{\epsilon/2}$ for large $N$.

Next we focus on the lower bound in equation \eqref{eq:asXklogN}.
By \eqref{eq:XNkt} we can write
\begin{equation*}
        \bbP\left(X_N^{((\log N)^\theta)} 
	\le \frac{N^{d/\ga}}{(\log N)^{\theta/\alpha + \epsilon}} 
	\right) \;\le\;
        e^{-c \, (\log N)^{\theta + \alpha \epsilon}} 
        \, \sum_{m=0}^{\lfloor (\log N)^\theta\rfloor - 1} 
        \frac{1}{m!} \, \left(e C \, (\log N)^{\theta + \alpha \epsilon} 
	\right)^m \,.
\end{equation*}
Observe that, for fixed $x > 0$, the sequence $m \mapsto x^m/m!$ is increasing for $m \le x$,
therefore for $k \le x$ we have $\sum_{m=0}^{k-1} x^m/m! \le k x^k/k! \le k (e x/k)^k$,
because $m! \ge (m/e)^m$ for all $m\in\N$. It follows that
for some constant $C' > 0$ and for large $N$ we can write
\begin{equation} \label{eq:BCLB}
\begin{split}
        \bbP\bigg(X_N^{((\log N)^\theta)} \le &
	\frac{N^{d/\ga}}{(\log N)^{\theta/\alpha + \epsilon}} \bigg)
        \;\le\; e^{-c \, (\log N)^{\theta + \alpha \epsilon}} 
	\, (\log N)^\theta \,
        \big( C' \, (\log N)^{\ga \epsilon} \big)^{(\log N)^\theta} \\
        & \qquad \qquad \;\le\; (\log N)^\theta \, 
	e^{-c \, (\log N)^{\theta + \alpha \epsilon} \,
	+\, (\log N)^\theta [ \alpha \epsilon \log \log N 
	+ \log C']} \\
        & \qquad \qquad \;\le\; (\log N)^\theta \, e^{-\frac 12 c \, 
	(\log N)^{\theta + \alpha \epsilon}} \;\le\; N^{-2} \,,
\end{split}
\end{equation}
because by assumption $\theta > 1$ and $\epsilon > 0$
(the $-2$ could be replaced by any negative number).
The Borel-Cantelli lemma then yields 
directly the lower bound in \eqref{eq:asXklogN}.

Finally, we prove together the upper bound in \eqref{eq:asXklogN} and \eqref{eq:asbig}. 
By Stirling's formula we have 
$(k-1)! \ge (\frac{k-1}{e})^{k-1} \ge (\frac{k}{3})^{k}$ for large $k$.
Applying \eqref{eq:XN1t}, we can then write
\begin{equation*}
	\bbP\left(X_N^{(k)} > A \, \frac{N^{d/\ga}}{k^{1/\alpha}} 
	\right) \;\le\; \frac{C^{k}}	{(k - 1)!}
	\, \left( \frac{k^{1/\alpha}}{A} \right)^{\alpha k}
	\;\le\; \left( \frac{3C}{A^\alpha} \right)^k \;\le\; e^{-2 k} \,,
\end{equation*}
provided $A$ is chosen larger than $(e^2 / 3C)^{1/\alpha}$.
By the inclusion bound,
\begin{equation*}
	\bbP\left( \exists k \in \{(\log N), \ldots, |\cB_N|\}:
	\ X_N^{(k)} > A \, \frac{N^{d/\ga}}{k^{1/\alpha}} \right)
	\;\le\; \sum_{k \ge \log N} e^{-2k} \;\le\; \frac{(const.)}{N^2} \,,
\end{equation*}
therefore by the Borel-Cantelli lemma it follows that, eventually
$\bbP$-almost surely in $N$, one has
$X_N^{(k)} \le A \, \frac{N^{d/\ga}}{k^{1/\alpha}}$ for all $k \ge \log N$.
This yields immediately \eqref{eq:asbig}, as well as the upper bound 
in \eqref{eq:asXklogN}, because by assumption $\theta > 1$.
%
\qed


%
%


\smallskip

\subsection{Proof of Proposition~\ref{th:gap15}}
\label{sec:gap15proof}

Since the relation \eqref{eq:gap15} becomes stronger
as $\beta$ increases, we can safely assume that $\beta > 1$.
Then by \eqref{eq:asXklogN} we have that, eventually $\bbP$-a.s.,
\begin{equation} \label{eq:lowinc}
	X_N^{(k)} \ge X_N^{((\log N)^{\beta})}
	\ge \frac{N^{d/\alpha}}{(\log N)^{2\beta/\alpha}}\,,
	\qquad \forall k \le (\log N)^\beta \,.
\end{equation}
Since a more quantitative control will be needed later,
we observe that for large $N$
\begin{equation} \label{eq:Cn}
	\bbP(\cC_N) \;\le\; \frac{1}{N} \,,
	\qquad \text{where} \qquad
	\cC_N \;:=\; \union_{m \ge N}
	\left\{ X_m^{((\log m)^{\beta})}
	\le \frac{m^{d/\alpha}}{(\log m)^{2\beta/\alpha}} \right\} \,,
\end{equation}
as it follows from \eqref{eq:BCLB}.

Thanks to \eqref{eq:lowinc},
in order to prove \eqref{eq:gap15} it suffices to show
that for every $\beta > 1$ there exists $\gamma > 0$
such that, eventually $\bbP$-a.s.,
the following event holds:
\begin{equation*}
	\cV_N := \left\{ \forall k\leq (\log N)^\beta \colon\, 
	X_{N}^{(k)}-X_{N}^{(k+1)} \ge 
	\frac{X_{N}^{(k)}}{(\log N)^\gamma} \right\} \,.
\end{equation*}
In order to apply the Borel-Cantelly lemma, it is convenient
to group the events $\cV_N$ together. More precisely,
for $n\in\N_0$ we set
$N_n := \lfloor e^{n^r} \rfloor$, where the constant 
$r \in (0,1)$ will be fixed later, and we define
\begin{equation*}
	\tilde\cV_n := \bigcap_{N_{n} < m \le N_{n+1}} \cV_m \,.
\end{equation*}
The proof is then completed once we show that
the event $\tilde\cV_n$ holds eventually $\bbP$-a.s. (in~$n$).

It only remains to show that $\bbP(\tilde\cV_n^c)$ decays fast enough
as $n\to\infty$.
By construction, if $\tilde{\cV}_n$ does not hold,
there must exist $m \in \{N_{n}+1, \ldots, N_{n+1}\}$
and $k \le (\log m)^\beta$ such that 
$0 < X_m^{(k)} - X_m^{(k+1)} < (\log m)^{-\gamma} X_m^{(k)}$.
Let $y, z \in \cB_m$ be the two points at which the values
$X_m^{(k)}$ and $X_m^{(k+1)}$ are attained, that is
$\xi(y) = X_m^{(k)}$ and $\xi(z) = X_m^{(k+1)}$.
It is convenient to distinguish three cases, according to whether
$y$ and $z$ are in $\cB_{N_{n}}$ or not.
\begin{enumerate}
\item If both $y, z \in \cB_{N_{n}}$,
we can write $\xi(y) = X_{N_n}^{(k')}$ and $\xi(z) = X_{N_n}^{(k'')}$
for some $k' < k''$. Since by construction
$\xi(z) = X_m^{(k+1)}$ and $\cB_m \supseteq \cB_{N_n}$,
we must have $k'' \le k+1$, whence
$k' \le k \le (\log m)^\beta \le (\log N_{n+1})^\beta$.
Also note that
\begin{equation*}
\begin{split}
X_{N_n}^{(k')} - X_{N_n}^{(k'+1)} & \le
X_{N_n}^{(k')} - X_{N_n}^{(k'')} =
X_m^{(k)} - X_m^{(k+1)} \\
& < (\log m)^{-\gamma} X_m^{(k)}
= (\log m)^{-\gamma} X_{N_n}^{(k')}
\le (\log N_n)^{-\gamma} X_{N_n}^{(k')} \,.
\end{split}
\end{equation*}
This shows that, if $\tilde{\cV}_n$ does not hold
and both $y, z \in \cB_{N_{n}}$, there must exist
$k' \le (\log N_{n+1})^\beta$ such that
$X_{N_n}^{(k')} - X_{N_n}^{(k'+1)}
\le (\log N_n)^{-\gamma} X_{N_n}^{(k')}$.

\item To handle the case
when $y,z \in \cB_m \setminus \cB_{N_n} \subseteq
\cB_{N_{n+1}} \setminus \cB_{N_{n}}$,
it is sufficient to observe that $\xi(y)$ and $\xi(z)$
must take large values, because of \eqref{eq:lowinc}.
More precisely, on the event $\cC_{N_n}^c$,
cf. \eqref{eq:Cn}, both $\xi(y)$ and $\xi(z)$ must be larger than
$m^{d/\alpha}/(\log m)^{2\beta/\alpha} \ge
N_n^{d/\alpha}/(\log N_{n+1})^{2\beta/\alpha}$.

\item Consider finally the case when
exactly one of the points $y,z$ lies in $\cB_{N_{n}}$.
If $y \in \cB_{N_n}$ and $z \in \cB_m \setminus \cB_{N_n}$,
we have  $\xi(y) = X_{N_n}^{(k')}$ for some
$k' \le (\log m)^\beta$, as we
have already remarked, therefore
$0 < X_{N_n}^{(k')} - \xi(z) < (\log m)^{-\gamma} X_{N_n}^{(k')}$.
Viceversa, if $z \in \cB_{N_n}$
and $y \in \cB_m \setminus \cB_{N_n}$,
we may write
$0 < \xi(y) - X_{N_n}^{(k'')} < (\log m)^{-\gamma} \xi(y)$,
for some $k'' \le (\log m)^\beta$.
In either case, we can state that
there exists some point $x \in 
\cB_{N_{n+1}} \setminus \cB_{N_{n}}$ and some
$\bar k \le (\log N_{n+1})^\beta$ such that
$(1 - (\log N_n)^{-\gamma}) < \xi(x)/X_{N_n}^{(\bar k)} 
< (1 - (\log N_n)^{-\gamma})^{-1}$.
\end{enumerate}
These considerations lead us
directly to the following basic decomposition:
\begin{equation*}
	\tilde{\cV}_n^c \;\subseteq\;
	\cW_{n}^{(1)} \,\cup\, 
	\big( \cC_{N_n} \,\cup\, \cW_{n}^{(2)} \big) \,\cup\,
	\cW_{n}^{(3)} \,,
\end{equation*}
where the event $\cC_{N}$ has been introduced in \eqref{eq:Cn} and
we have set
\begin{equation*}
\begin{split}
	\cW_{n}^{(1)} & := 
	\union_{k' \le (\log N_{n+1})^\beta}
	\left\{ X_{N_n}^{(k'+1)}> \left( 1- \frac{1}{(\log N_n)^\gamma}
	\right) X_{N_n}^{(k')}\right\} \,, \\
	\cW_{n}^{(2)} & := 
	\union_{y , z \in \cB_{N_{n+1}}
	\setminus \cB_{N_n}, \; y \ne z}
	\Bigg\{\xi(y)
	\geq \frac{N_n^{d/\alpha}}{(\log N_{n+1})^{2\beta/\alpha}},
	\; \xi(z)
	\geq \frac{N_n^{d/\alpha}}{(\log N_{n+1})^{2\beta/\alpha}} 
	\Bigg\} \,, \\
	\cW_{n}^{(3)} & :=
	\union_{x \in \cB_{N_{n+1}} \setminus \cB_{N_n}, 
	\, \bar k\leq (\log N_{n+1})^\beta}
	\Bigg\{ 1- \frac{1}{(\log N_n)^\gamma}
	< \frac{\xi(x)}{X_{N_n}^{(\bar k)}}
	< \left( 1 - \frac{1}{(\log N_n)^\gamma} \right)^{-1} \Bigg\} \,.
\end{split}
\end{equation*}
Note that, by \eqref{eq:Cn}, $\sum_{n\in\N} \bbP(\cC_{N_n}) \le
\sum_{n\in\N} \frac{1}{N_n} \le
\sum_{n\in\N} e^{-n^r + 1} < \infty$.
By the Borel-Cantelli lemma, it suffices to show that
$\sum_{n \in \N} \bbP(\cW_n^{(i)}) < \infty$ for $i=1,2,3$
and it will follow that $\tilde \cV_n$ holds eventually $\bbP$-a.s.,
that is what we want to prove.

Let us consider $\cW_n^{(1)}$.
By \eqref{eq:Xk} we have
$\bbP \big( X_N^{(k+1)}\geq (1-\gep) X_N^{(k)} \big) \le c \, k \, \gep$
for some constant $c > 0$. Recalling that $N_n = e^{n^r}$,
for large $n$ we have
\begin{equation}\label{probaw}
\begin{split}
	\bbP(\cW_{n}^{(1)}) & \;\le\;
	\sum_{k=1}^{\lfloor (\log N_{n+1})^\beta \rfloor}
	\bbP \left( X_{N_n}^{(k+1)}> 
	\left( 1-\frac{1}{(\log N_n)^\gamma}
	\right) X_{N_n}^{(k)} \right) \\
	& \;\le\;
	c \, \frac{1}{(\log N_n)^\gamma}
	\,\sum_{k=1}^{\lfloor (\log N_{n+1})^\beta \rfloor} k 
	\;\le\; c' \, \frac{(\log N_{n+1})^{2\beta}}
	{(\log N_n)^\gamma} \;\le\; \frac{c''}{n^{r(\gamma - 2 \beta)}} \,,
\end{split}
\end{equation}
for suitable $c, c'' > 0$. It follows that
$\sum_{n\in\N} \bbP(\cW_n^{(1)}) < \infty$ provided
$r(\gamma - 2\beta) > 1$.

Next we consider $\cW_n^{(2)}$.
Observe that there exist constants $c, c' >0$ such that 
\begin{equation}\label{eq:cardi}
	|\cB_{N_{n+1}} \setminus \cB_{N_n}|
	\;\le\; c \, (N_{n+1} - N_n) \, (N_n)^{d-1} \;\le\;
	c' \frac{(N_n)^d}{n^{1-r}} \,,
\end{equation}
because $N_{n+1} - N_n = \lfloor e^{(n+1)^r} \rfloor - 
\lfloor e^{n^r} \rfloor = e^{n^r} r\, n^{r-1} (1+o(1))$ as $n \to \infty$.
Recalling that $\bbP(\xi(x) > t) \le t^{-\alpha}$
by \eqref{eq:pareto}, for a suitable $c''>0$ we can write
\begin{equation*}
\begin{split}
	\bbP(\cW_{n}^{(2)}) & \;\le\;
	\sum_{y, z \in \cB_{N_{n+1}}\setminus \cB_{N_n}, \, y \ne z} 
	\bbP \bigg( \xi(y)>  \frac{N_n^{d/\alpha}}
	{(\log N_{n+1})^{2\beta/\alpha}} \bigg)^2\\
	& \;\le\; (c')^2 \, \frac{(N_n)^{2d}}{n^{2(1-r)}} \,
	\frac{(\log N_{n+1})^{4\beta}}{(N_n)^{2d}} \;\le\;
	c'' \, \frac{n^{4\beta r}}{n^{2(1-r)}} \;=\;
	\frac{c''}{n^{2 - (4\beta + 2) r}} \,.
\end{split}
\end{equation*}
Therefore $\sum_{n\in\N} \bbP(\cW_{n}^{(2)})<\infty$
provided $2 - (4\beta + 2) r > 1$.

We finally focus on $\cW_n^{(3)}$.
Note that by \eqref{eq:pareto} for all $t > 1$
and $\gep < \frac 12$ we can write
\begin{equation}
	\bbP \bigg( (1-\gep) < \frac{\xi(x)}{t}
	< (1-\gep)^{-1} \bigg) \;=\;
	\int_{(1-\gep)t}^{(1-\gep)^{-1}t}
	\frac{\alpha}{s^{1+\alpha}} \, \dd s
	\;\le\; c \, \alpha \, \frac{\gep}{t^{\alpha}} \,,
\end{equation}
for some universal constant $c>0$. Note that
$\xi(x)$ is independent of $X_{N_n}^{(k)}$ if
$x \not \in \cB_{N_n}$.
If we are on the event $\cC_{N_n}^c$, cf. \eqref{eq:Cn},
$X_{N_n}^{(k)} \ge (N_n)^{d/\alpha}/(\log N_n)^{2\beta/\alpha}$
for $k \le (\log N_n)^\beta$, hence
\begin{equation*}
	\bbP \Bigg( 1- \frac{1}{(\log N_n)^\gamma}
	< \frac{\xi(x)}{X_{N_n}^{(k)}}
	< \left( 1 - \frac{1}{(\log N_n)^\gamma} \right)^{-1},
	\; \cC_{N_n}^c \Bigg) \;\le\;
	c \, \alpha \, \frac{1}{(\log N_n)^\gamma}
	\, \frac{(\log N_n)^\alpha}{(N_n)^d} \,.
\end{equation*}
Recalling \eqref{eq:Cn}, it follows that
\begin{equation*}
\begin{split}
	\bbP(\cW_{n}^{(3)}) & \;\le\; \bbP(\cC_{N_n}) +
	\bbP(\cW_{n}^{(3)}, \, \cC_{N_n}^c)  \;\le\; \frac{1}{N_n} +
	(\log N_{n+1})^\beta \, |\cB_{N_{n+1}}| \cdot 
	c \, \alpha \, \frac{(\log N_n)^{\alpha - \gamma}}{(N_n)^d} \\
	& \;\le\; c' \left( \frac{1}{e^{n^r}} +
	\frac{1}{n^{r(\gamma - \alpha - \beta)}} \right) \,,
\end{split}
\end{equation*}
for a suitable constant $c' >0$. If $r(\gamma - \alpha - \beta) >1$
we then have $\sum_{n\in\N} \bbP(\cW_{n}^{(3)})<\infty$.

The proof is completed observing that the three relations
we have found, namely
\begin{equation*}
	r(\gamma - 2 \beta) > 1\,, \qquad
	2 - (4 \beta + 2)r > 1\,, \qquad
	r(\gamma - \alpha - \beta) > 1\,,
\end{equation*}
can be satisfied at the same time. In fact, for any fixed $\beta$,
we can choose $r \in (0,1)$ small enough such that the second 
relation holds (e.g. $r := (4\beta + 3)^{-1}$)
and then choose $\gamma > 0$ large enough
so that the first and the third relations are satisfied
(e.g. $\gamma := 6 \beta + \alpha + 3$).

\medskip
\section{Order statistics for the modifed field}
\label{ap:modfield}

\smallskip

\subsection{Proof of Lemma~\ref{th:lemZ}}
\label{sec:lemZproof}

We are going to prove the following stronger result.

\begin{lemma}\label{lem:app_con_znkznun}
For all $k\ge 2$ and $\delta\in(0,1)$ one has
  \begin{gather}
    \label{eq:so1}
    \prob{\znk \ge (1-\delta) \znu} 
    \le (1 -(1-\delta)^{\alpha})^{k-1} \,.
  \end{gather}
\end{lemma}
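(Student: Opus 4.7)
The plan is to mimic closely the proof of Lemma~\ref{le:lem} for the field $\xi$, but with the extra factor $\phi_N(x) := 1 - |x|/(N+1) \in (0,1]$ that defines the modified field $\psi_N(x) = \phi_N(x)\,\xi(x)$. The natural substitute for ``$1/\xi(x)^\alpha$ is Uniform on $(0,1)$'' is the identity $1/\xi(x)^\alpha = \phi_N(x)^\alpha/\psi_N(x)^\alpha$, which still gives a genuine Uniform $(0,1)$ random variable.

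First, I would decompose by the location $A \subseteq \cB_N$ with $|A|=k-1$ of the top $k-1$ values of $\psi_N$, so that, writing $M^\psi_{A^c} := \sup_{z\in A^c}\psi_N(z)$,
\begin{equation*}
\bbP\bigl(Z_N^{(k)} \ge (1-\delta)Z_N^{(1)}\bigr)
= \sum_{A \subseteq \cB_N,\,|A|=k-1}
\bbP\Bigl(M^\psi_{A^c} < \psi_N(y) \le \tfrac{M^\psi_{A^c}}{1-\delta}\ \forall y\in A\Bigr).
\end{equation*}
Rewriting the event in terms of $1/\xi(y)^\alpha$, and conditioning on $\cG_A := \sigma(\{\xi(z) : z\in A^c\})$ so that $M^\psi_{A^c}$ is frozen, each $y\in A$ contributes (independently) a probability that $1/\xi(y)^\alpha$ lies in the interval $[a_y(1-\delta)^\alpha,\, a_y) \cap (0,1]$, where $a_y := \phi_N(y)^\alpha/(M^\psi_{A^c})^\alpha$.

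The second step is the key per-site estimate. Distinguishing $a_y \le 1$ and $a_y > 1$, a short case analysis yields in both cases
\begin{equation*}
\bbP\bigl(a_y(1-\delta)^\alpha \le 1/\xi(y)^\alpha < a_y \,\big|\, \cG_A\bigr)
\;\le\; \bigl(1-(1-\delta)^\alpha\bigr)\,\min(a_y,\,1);
\end{equation*}
indeed, for $a_y\le 1$ the length is exactly $a_y(1-(1-\delta)^\alpha)$, while for $a_y>1$ the probability is at most $1-a_y(1-\delta)^\alpha \le 1-(1-\delta)^\alpha$. Multiplying over $y\in A$ and taking expectations gives
\begin{equation*}
\bbP\bigl(Z_N^{(k)} \ge (1-\delta)Z_N^{(1)}\bigr)
\;\le\; \bigl(1-(1-\delta)^\alpha\bigr)^{k-1}
\sum_{A:\,|A|=k-1}
\bbE\!\left[\,\prod_{y\in A}\min\!\left(\frac{\phi_N(y)^\alpha}{(M^\psi_{A^c})^\alpha},\,1\right)\right].
\end{equation*}

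Finally, one observes that the inner expectation is precisely $\bbP(\{Z_N^{(1)},\ldots,Z_N^{(k-1)}\} = A)$: conditionally on $\cG_A$, the event that $\psi_N(y) > M^\psi_{A^c}$ for every $y\in A$ has probability $\prod_{y\in A}\bbP(\xi(y) > M^\psi_{A^c}/\phi_N(y)\mid\cG_A) = \prod_{y\in A}\min(\phi_N(y)^\alpha/(M^\psi_{A^c})^\alpha,\,1)$. Summing over all admissible $A$ then gives $1$, yielding \eqref{eq:so1}. The only delicate point in the whole argument is the truncation at $1$ in the per-site bound (the analog of \eqref{eq:Xk1} was cleaner because $1/\xi(y)^\alpha$ is automatically in $(0,1)$); this is why one writes the upper bound with the factor $\min(a_y,1)$ rather than $a_y$ itself, and it is precisely what allows the telescoping sum $\sum_A \bbP(\{Z_N^{(1)},\ldots,Z_N^{(k-1)}\}=A)=1$ to close the estimate.
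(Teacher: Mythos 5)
Your proof is correct and follows essentially the same route as the paper's: decompose on the set $A$ of the top $k-1$ sites of $\psi_N$, use that $1/\xi(y)^\alpha$ is uniform on $(0,1)$ to obtain a per-site bound of the form $(1-(1-\delta)^\alpha)\min(a_y,1)$, and close by recognizing the resulting sum as $\sum_A \bbP(\{Z_N^{(1)},\dots,Z_N^{(k-1)}\}=A)=1$. The only difference is cosmetic: the paper packages $\min(\cdot,1)$ as the uniform distribution function $J$ and proves the per-site bound via the inequality $J((1-\delta)^\alpha t^\alpha)\ge(1-\delta)^\alpha J(t^\alpha)$, whereas you verify it by a direct two-case analysis.
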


\begin{proof}
We set
$L_{A} := \sup_{x\in A} \psi_N(x)$ (recall \eqref{eq:psi})
and $A^c := \cB_N \setminus A$ for short.
We also set $\phi_N(x) := (1-\frac{|x|}{N+1})$, so that
$\psi_N(x) = \phi_N(x) \xi(x)$.
Summing over the location of the set $A = \{Z_N^{(1)}, \ldots, Z_N^{(k-1)}\}$,
so that $Z_N^{(k)} = L_{A^c}$, we can write
\begin{align}
	\nonumber
	\bbP( Z_N^{(k)} & \ge (1-\delta) Z_N^{(1)} )
	= \sum_{A \subseteq \cB_N,\, |A|=k-1}
	\bbP\left( \{Z_N^{(1)}, \ldots, Z_N^{(k-1)}\} = A, \;
	L_{A^c} \ge (1-\delta) Z_N^{(1)} \right) \\
	\label{eq:tre}
	& = \sum_{A \subseteq \cB_N,\, |A|=k-1}
	\bbP\left( L_{A^c} < \psi_N(x) \le (1-\delta)^{-1} L_{A^c} \,, \;
	\forall x \in A \right) \\
	\nonumber
	& = \sum_{A \subseteq \cB_N,\, |A|=k-1}
	\bbP\left( (1-\delta)^\alpha \left(\frac{\phi_N(x)}{L_{A^c}}\right)^\alpha
	\le \frac{1}{\xi(x)^\alpha} < \left(\frac{\phi_N(x)}{L_{A^c}}\right)^\alpha \,, \;
	\forall x \in A \right) \,.
\end{align}
It follows from \eqref{eq:pareto} that
the variable $1/\xi(x)^\alpha$ 
is uniformly distributed on the interval $(0,1)$, that is,
its distribution function equals $J(x) := (x \wedge 1) \ind_{(0,\infty)}(x)$,
hence
\begin{equation*}
	\bbP\left( (1-\delta)^\alpha t^\alpha
	\le \frac{1}{\xi(x)^\alpha} < t^\alpha \right) = J(t^\alpha) -
	J((1-\delta)^\alpha t^\alpha) \,.
\end{equation*}
One checks easily that $J((1-\delta)^\alpha t^\alpha) \ge (1-\delta)^\alpha J(t^\alpha)$
for all $\delta \in (0,1)$ and $t \ge 0$
(the inequality is strict for $t > 1$), therefore
\begin{equation} \label{eq:treineq}
	\bbP( Z_N^{(k)} \ge (1-\delta) Z_N^{(1)} ) \le
	(1 - (1-\delta)^\alpha)^{k-1} \sum_{A \subseteq \cB_N,\, |A|=k-1}
	\bbE \Bigg[ \prod_{x\in A} J \left(
	\frac{\phi_N(x)^\alpha}{(L_{A^c})^\alpha} \right) \Bigg] \,.
\end{equation}
Setting $\delta = 1$ in \eqref{eq:tre} we see that the
sum in the right hand side of the last equation equals one,
and the proof is completed.
\end{proof}

\begin{remark}\rm
One can refine the proof of Lemma~\ref{lem:app_con_znkznun} to show that
\begin{equation*}
	\prob{\znk \ge (1-\delta) \znu} \ge
	\big( 1 - C_k \, e^{-c_k N^d} \big) \, (1 -(1-\delta)^{\alpha})^{k-1} \,,
\end{equation*}
for suitable constants $c_k, C_k \in (0,\infty)$ and for large $N$. In fact,
restricting the expectations in \eqref{eq:tre} to the event
$\{Z_N^{(k)} > 1\}$, one has $\phi_N(x)/L_{A^c} \le 1$ and therefore \eqref{eq:treineq}
becomes
\begin{equation*}
	\bbP \big( Z_N^{(k)} \ge (1-\delta) Z_N^{(1)},\; Z_N^{(k)} > 1 \big) = 
	(1 - (1-\delta)^\alpha)^{k-1} \, \bbP( Z_N^{(k)} > 1) \,.
\end{equation*}
It then remains to check that $\bbP( Z_N^{(k)} > 1) \le C_k \exp(- c_k N^d)$,
which can be easily done by direct computation.
\end{remark}

\smallskip
\subsection{Proof of Lemma~\ref{cori}}
\label{sec:cori}

As already remarked, only the first inequality
in \eqref{eq:lbZ1} needs to be proved, because $Z_N^{(2)} \le Z_N^{(1)} \le X_N^{(1)}$
(recall \eqref{eq:ubX1}). We start with an auxiliary lemma.

\begin{lemma}
There exist constants $c_1, c_2$ such that for all $N \in \N$ and $t \ge 0$
\begin{gather} \label{eq:Z1t}
	\bbP (Z_N^{(2)} \le N^{d/\ga}\, t) \le c_1 \,
        e^{-\frac{c_2}{t^\ga}}  \,.
\end{gather}
\end{lemma}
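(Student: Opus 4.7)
The main idea is to recast the event $\{Z_N^{(2)}\le N^{d/\ga}t\}$ in terms of a sum of independent Bernoullis. Setting $\phi_N(x):=1-|x|/(N+1)$, the statement that $\psi_N$ attains at most one value larger than $N^{d/\ga}t$ on $\cB_N$ means precisely that the counting variable
\[
	Y_N \,:=\, \sum_{x\in\cB_N} \ind_{\{\psi_N(x)>N^{d/\ga}t\}}
\]
satisfies $Y_N\le 1$. Since the $\xi(x)$ are i.i.d.\ Pareto, $Y_N$ is a sum of independent Bernoulli variables with success probabilities
\[
	p_x \,=\, \bbP\bigl(\phi_N(x)\xi(x)>N^{d/\ga}t\bigr)
	\,=\, \min\!\left(\frac{\phi_N(x)^\ga}{N^d t^\ga},\,1\right),
\]
and the problem reduces to estimating $\bbP(Y_N\le 1)$ in terms of $\lambda_N:=\bbE[Y_N]=\sum_{x\in\cB_N}p_x$.

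The next step is a standard Poisson-type bound. Using independence,
\[
	\bbP(Y_N=0) \,=\, \prod_{x\in\cB_N}(1-p_x) \,\le\, e^{-\lambda_N},
	\qquad
	\bbP(Y_N=1) \,\le\, \sum_{x\in\cB_N} p_x\, e^{-\lambda_N+p_x}
	\,\le\, e\,\lambda_N\, e^{-\lambda_N},
\]
where the last inequality uses $p_x\le 1$. Adding these two estimates and observing that the map $u\mapsto (1+eu)e^{-u/2}$ is bounded on $[0,\infty)$, we obtain
\[
	\bbP(Y_N\le 1) \,\le\, (1+e\lambda_N)e^{-\lambda_N} \,\le\, C\, e^{-\lambda_N/2}
\]
for a universal constant $C$.

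It remains to produce a uniform lower bound of the form $\lambda_N\ge c/t^\ga$. Here the key observation is that for every $x\in\cB_{\lfloor N/2\rfloor}$ one has $\phi_N(x)\ge 1/2$, so $p_x\ge \min\bigl((2N^{d/\ga}t)^{-\ga},\,1\bigr)$. Since $|\cB_{\lfloor N/2\rfloor}|\ge c'N^d$ for some $c'>0$ and all $N\ge 2$, one immediately gets
\[
	\lambda_N \,\ge\, c'\,N^d \cdot \min\!\left(\frac{1}{(2N^{d/\ga}t)^\ga},\,1\right)
	\,=\, \min\!\left(\frac{c'\,2^{-\ga}}{t^\ga},\,c'N^d\right).
\]
When $2N^{d/\ga}t\ge 1$, the minimum equals $c'2^{-\ga}/t^\ga$, giving the required bound; when $2N^{d/\ga}t<1$, the same $x\in\cB_{\lfloor N/2\rfloor}$ satisfy $p_x=1$ deterministically, so $Y_N\ge|\cB_{\lfloor N/2\rfloor}|\ge 2$ almost surely and $\bbP(Y_N\le 1)=0$. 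The finitely many small values of $N$ (where $|\cB_{\lfloor N/2\rfloor}|<2$) are absorbed into the constant $c_1$. Combining these pieces yields $\bbP(Z_N^{(2)}\le N^{d/\ga}t)\le c_1\, e^{-c_2/t^\ga}$ with $c_2:=c'2^{-\ga-1}$.

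The only mildly delicate point is the case analysis forced by the $\min(\cdot,1)$ truncation in $p_x$: one must verify that the regime $N^{d/\ga}t<1$, where several $p_x$ saturate at $1$, does not spoil the bound. All remaining steps reduce to Riemann-sum estimates and elementary manipulations of the Pareto law.
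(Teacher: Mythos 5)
Your proof is correct, but it follows a genuinely different route from the paper's. You recast $\{Z_N^{(2)}\le N^{d/\ga}t\}$ as $\{Y_N\le 1\}$ for the exceedance count $Y_N=\sum_{x\in\cB_N}\ind_{\{\psi_N(x)>N^{d/\ga}t\}}$, exploit independence through the Poisson-style bound $\bbP(Y_N\le 1)\le (1+e\lambda_N)e^{-\lambda_N}\le C\,e^{-\lambda_N/2}$, and then bound $\lambda_N$ from below by restricting to $\cB_{\lfloor N/2\rfloor}$, where $\phi_N(x)\ge\tfrac12$. The paper instead conditions on the site $x$ attaining the maximum of $\psi_N$, sets $O_x:=\sup_{z\in\cB_N\setminus\{x\}}\psi_N(z)$, exploits the fact that $1/\xi(x)^\ga$ is uniform on $(0,1)$ together with the identity $\bbE(Z\,\ind_{\{Z\ge a\}})=a\,\bbP(Z\ge a)+\int_a^\infty\bbP(Z\ge s)\,\dd s$, and then controls $\bbP\bigl(1/O_x^\ga\ge u/N^d\bigr)\le e^{-cu}$ by a product estimate and a Riemann-sum approximation of $N^{-d}\sum_{z}(1-|z|/(N+1))^\ga$. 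Both arguments ultimately rest on the same elementary fact --- that a fixed positive fraction of the sites in $\cB_N$ have $\phi_N$ bounded below --- but your counting route is shorter and avoids the change of variables and the expectation identity. Two minor points deserve one sentence each in a polished write-up: (i) the case $N=1$, where $|\cB_{\lfloor N/2\rfloor}|<2$, must be checked directly (it holds because $Z_1^{(2)}\ge\tfrac12$ deterministically, so one enlarges $c_1$), and (ii) the regime $2N^{d/\ga}t<1$, where the truncated probabilities $p_x$ saturate at $1$, must be dispatched as you do by noting $\bbP(Y_N\le 1)=0$ there --- your case analysis does address both correctly.
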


\begin{proof}
Setting $O_x := \sup_{x \in \cB_N \setminus \{x\}} \psi_N(x)$ for short,
we can write
\begin{equation*}
\begin{split}
	\bbP (Z_N^{(2)} \le N^{d/\ga}\, t) & = \sum_{x \in \cB_N}
	\bbP (O_x \le N^{d/\ga}\, t,\, \xi(x) > O_x) \\
	& = \sum_{x \in \cB_N} \bbP \left( \frac{1}{O_x^\alpha} \ge \frac{1}{N^{d}\, t^\ga},
	\, \frac{1}{\xi(x)^\alpha} < \frac{1}{O_x^\alpha} \right) 
	\le \sum_{x \in \cB_N}
	\bbE \left( \frac{1}{O_x^\alpha} \,
	\ind_{\left\{\frac{1}{O_x^\alpha} \ge \frac{1}{N^{d}\, t^\ga}\right\}} \right) \,,
\end{split}
\end{equation*}
because $1 / \xi(x)^\alpha$ is uniformly distributed on the interval $(0,1)$,
as it follows from \eqref{eq:pareto}. We then apply the basic formula
$\bbE(Z \, \ind_{\{Z \ge a\}}) = a \bbP(Z \ge a) + \int_a^\infty \bbP(Z \ge s) \, \dd s$,
getting
\begin{equation*}
	\bbP (Z_N^{(2)} \le N^{d/\ga}\, t)  \le
	\frac{1}{N^{d}} \sum_{x \in \cB_N} \left\{ \frac{1}{t^\ga} \,
	\bbP \left( \frac{1}{O_x^\alpha} \ge \frac{1}{N^{d}\, t^\ga} \right)
	+ \int_{t^{-\alpha}}^\infty 
	\bbP \left( \frac{1}{O_x^\alpha} \ge \frac{u}{N^{d}} \right)
	\, \dd u \right\} \,.
\end{equation*}
We now claim that there exists $c > 0$ such that for all $N \in \N$,
$x \in \cB_N$ and $u > 0$
\begin{equation} \label{eq:totoprove}
	\bbP \left( \frac{1}{O_x^\alpha} \ge \frac{u}{N^{d}} \right) \le
	e^{-c \, u} \,.
\end{equation}
Since $|\cB_N| \le C N^d$ for some constants $C$, we get
\begin{equation*}
	\bbP (Z_N^{(2)} \le N^{d/\ga}\, t)  \le \frac{C}{t^\alpha}
	\, e^{-c \, t^{-\alpha}} \,+\, C \int_{t^{-\alpha}}^\infty 
	e^{-c \, u} \, \dd u \le e^{-c \, t^{-\alpha}}
	\left( \frac{C}{t^\alpha} + \frac{C}{c} \right) \,.
\end{equation*}
Since the function $t \mapsto t^{-\alpha} \, e^{-\frac 12 c \, t^{-\alpha}}$
is bounded on $\R^+$, it follows that \eqref{eq:Z1t}
holds true with $c_2 := \frac 12 c$ and for $c_1$ large enough.

It remains to prove \eqref{eq:totoprove}, for which we can write
\begin{equation*}
\begin{split}
	\bbP \left( \frac{1}{O_x^\alpha} \ge \frac{u}{N^{d}} \right)
	& = \prod_{z \in \cB_N \setminus \{x\}}
	\bbP \left( \frac{1}{\xi(z)^\alpha} \ge 
	\frac{\big(1-\tfrac{|z|}{N+1}\big)^\alpha}{N^d} u \right) \\
	& \le \exp \Bigg( \frac{u}{N^d} \sum_{z \in \cB_N \setminus \{x\}}
	\big( 1-\tfrac{|z|}{N+1} \big)^\alpha \Bigg) \,,
\end{split}
\end{equation*}
because $\bbP(1/\xi(z)^\alpha \ge a) = 1-a \le e^{-a}$ for $a \in [0,1]$
(recall \eqref{eq:pareto}). By a Riemann sum approximation, as $N\to\infty$ one has
\begin{equation*}
	\frac{1}{N^d} \sum_{z \in \cB_N \setminus \{x\}}
	\big( 1-\tfrac{|z|}{N+1} \big)^\alpha \;\longrightarrow\;
	\int_{|y| \le 1} (1-|y|)^\alpha \, \dd y \;\in\; (0,\infty) \,,
\end{equation*}
from which it follows that \eqref{eq:totoprove} holds true for some $c > 0$.
\end{proof}

\begin{proof}[Proof of Lemma~\ref{cori}]
Thanks to the inequality \eqref{eq:Z1t}, the proof is
identical to that of the
lower bound in \eqref{eq:ubX1}, cf. Appendix~\ref{sec:ubX1proof}. More precisely,
one first shows, through a standard Borel-Cantelli argument, that
the first inequality in \eqref{eq:lbZ1} (with $\epsilon$ replaces by $\epsilon/2$, say)
holds along the subsequence $N_k := 2^k$; the extension to all
values of $N$ then follows easily, because $Z_N^{(2)}$ is increasing in $N$.
We omit the details for conciseness.
\end{proof}

%
%

\subsection{Further results}
\label{sec:gap12?}

It may be useful to observe that if
$z_{N+1}^{(1)} \ne z_{N}^{(1)}$ then
\begin{equation} \label{eq:claim}
        |z_{N+1}^{(1)}| > |z_{N}^{(1)}| \qquad \text{and} \qquad
        \xi(z_{N+1}^{(1)}) > \xi(z_{N}^{(1)}) \,.
\end{equation}
In fact, when $z_{N+1}^{(1)} \ne z_{N}^{(1)}$ we have
by definition
\begin{equation} \label{eq:N+1}
        Z_N^{(1)} = \psi_N(z_N^{(1)}) > \psi_N (z_{N+1}^{(1)}) \,, \qquad
        \psi_{N+1}(z_N^{(1)}) < \psi_{N+1} (z_{N+1}^{(1)}) = Z_{N+1}^{(1)} \,,
\end{equation}
from which we obtain, recalling the definition \eqref{eq:psi} of $\psi_N$,
\begin{equation*}
        \frac{|z_N^{(1)}| \, \xi(z_N^{(1)})}{(N+1)(N+2)} = \psi_{N+1}(z_N^{(1)})
        - \psi_{N}(z_N^{(1)}) < \psi_{N+1}(z_{N+1}^{(1)})
        - \psi_{N}(z_{N+1}^{(1)})  =
        \frac{|z_{N+1}^{(1)}| \, \xi(z_{N+1}^{(1)})}{(N+1)(N+2)} \,,
\end{equation*}
hence $|z_N^{(1)}| \, \xi(z_N^{(1)}) < |z_{N+1}^{(1)}| \, \xi(z_{N+1}^{(1)})$.
This shows that at least one of the two inequalities
in \eqref{eq:claim} must hold. Two cases remain that need
to be excluded:
\begin{itemize}
\item if $|z_{N+1}^{(1)}| \le |z_{N}^{(1)}|$ and
$\xi(z_{N+1}^{(1)}) > \xi(z_{N}^{(1)})$, then
\begin{equation*}
        \psi_N(z_{N+1}^{(1)}) = \left( 1- \frac{|z_{N+1}^{(1)}|}{N+1} \right)
        \xi(z_{N+1}^{(1)}) > \left( 1- \frac{|z_{N}^{(1)}|}{N+1} \right)
        \xi(z_{N}^{(1)}) = \psi_N(z_N^{(1)}) = Z_N^{(1)}\,,
\end{equation*}
which is absurd, because $Z_N^{(1)}$ is by definition the maximum of $\psi_N$;

\item analogously, if $|z_{N+1}^{(1)}| > |z_{N}^{(1)}|$ and
$\xi(z_{N+1}^{(1)}) \le \xi(z_{N}^{(1)})$, then
\begin{equation*}
        Z_{N+1}^{(1)} =
        \psi_{N+1}(z_{N+1}^{(1)}) = \left( 1- \frac{|z_{N+1}^{(1)}|}{N+2} \right)
        \xi(z_{N+1}^{(1)}) < \left( 1- \frac{|z_{N}^{(1)}|}{N+2} \right)
        \xi(z_{N}^{(1)}) = \psi_{N+1}(z_N^{(1)}) \,,
\end{equation*}
which is again absurd, because $Z_{N+1}^{(1)}$ is by definition
the maximum of $\psi_{N+1}$.
\end{itemize}

\smallskip

Next we show that a statement analogous to \eqref{eq:gap13}
for the gap $Z_N^{(1)} - Z_N^{(2)}$ \emph{does not hold}.
Let us fix any $\bar N$ for which
$z_{\bar N}^{(1)} \ne z_{\bar N+1}^{(1)}$ (note that there are
almost surely infinitely
many such values of $\bar N$, otherwise $Z_N^{(1)} = \psi_N(z_N^{(1)})$
would be eventually constant). We set
$x := z_{\bar N}^{(1)}$ and $y := z_{\bar N+1}^{(1)}$ for short.
Then $Z_{\bar N}^{(1)} = \psi_N(x)$ and $Z_{\bar N}^{(2)} \ge \psi_N(y)$, hence,
recalling \eqref{eq:psi},
\begin{equation*}
\begin{split}
	Z_{\bar N}^{(1)} - Z_{\bar N}^{(2)} & \le \psi_{\bar N}(x) - \psi_{\bar N}(y)
	= \frac{1}{{\bar N}+1} \Big( ({\bar N}+1)\big( \xi(x) - \xi(y) \big)
	+ |y|\xi(y) - |x|\xi(x) \Big) \\
	& = \frac{1}{{\bar N}+1} \Big( ({\bar N}+2)\big( \xi(x) - \xi(y) \big)
	+ |y|\xi(y) - |x|\xi(x) \Big) \,+\, \frac{\xi(y) - \xi(x)}{{\bar N}+1} \\
	& = \frac{{\bar N}+2}{{\bar N}+1} \Big( \psi_{\bar N +1}(x) 
	- \psi_{\bar N + 1}(y) \Big) \,+\, \frac{\xi(y) - \xi(x)}{{\bar N}+1} \,.
\end{split}
\end{equation*}
By construction $y = z_{\bar N+1}^{(1)}$ and $y \ne x$, therefore
$\psi_{\bar N +1}(y) = Z_{\bar N+1}^{(1)} > \psi_{\bar N +1}(x)$.
Recalling \eqref{eq:ubX1}, we infer that eventually $\bbP$-a.s.
\begin{equation} \label{eq:roughest}
	Z_{\bar N}^{(1)} - Z_{\bar N}^{(2)} \le \frac{\xi(z_{\bar N+1}^{(1)}) 
	- \xi(z_{\bar N}^{(1)})}{{\bar N}+1} \le
	\frac{X_{\bar N+1}^{(1)}}{\bar N + 1} \le
	{\bar N}^{d/\alpha - 1} \, (\log \bar N)^{1/\alpha + \epsilon} \,.
\end{equation}
We stress that this bound differs from the one in \eqref{eq:gap13}
almost by a factor $N^{-1}$.
It turns out that the bound \eqref{eq:roughest} is quite sharp
(up to logarithmic corrections): in fact, by the first bound
in \eqref{eq:gap123}, \eqref{eq:lbZ1} and a Borel-Cantelli argument,
it follows that for every $\epsilon > 0$, eventually $\bbP$-almost surely,
\begin{equation} \label{eq:roughest2}
	Z_N^{(1)} - Z_N^{(2)} \ge \frac{Z_N^{(1)}}{N (\log N)^{1+\epsilon/2}}
	\ge \frac{N^{d/\alpha - 1}}{(\log N)^{1 + \epsilon}} \,.
\end{equation}
This implies in particular that
$N(Z_N^{(1)} - Z_N^{(2)}) \to +\infty$, $\bbP$-almost surely.


\medskip
\section{Proof of \eqref{eq:wz112} in Remark \ref{re:zn2}}
\label{ap:zn2}

We want to prove, for $d=1$, that 
\begin{equation} \label{eq:toz2}
	\bbP\left(\rw_{N,\xi} = z_{N,\xi}^{(2)} 
	\text{ for infinitely many $N$} \right) \,=\, 1 \,.
\end{equation}
To simplify notation, we only consider the case
$\alpha > 1$ and we set $m_\alpha=\bbE(\xi_1)=\alpha/(\alpha-1)$,
cf. \eqref{eq:pareto}. Recalling \eqref{eq:rw0}, we set $\kappa = \p(S_1=0)$ 
and $\hat{\kappa}=\p(S_1 = 1)$. For the sake of simplicity,
we also assume that $\log (\hat{\kappa}/\kappa) >-m_\alpha$.
The cases $\log (\hat{\kappa}/\kappa)< -m_\alpha$ and
$\log (\hat{\kappa}/\kappa)=-m_\alpha$ are controlled with analogous arguments.

For $\alpha,\eta > 0$ and for $n\in \N$ and 
$\gep>0$ we define the event $B_{\epsilon,\eta,n} \subseteq \Omega_\xi$ by
\begin{equation}\label{eq:defB}
\begin{split}
 	B_{\epsilon,\eta,n} \,:=\, \big\{ & \exists ! x \in [n, (1+\gep)n]: \,
	\xi(x) \in (1, 1+\gep) n^{1/\alpha} \,, \\
 	& \exists ! y \in [3n, (1+\gep) 3n]: \,
	\xi(y) \in (1, 1+\gep) \tfrac53n^{1/\alpha} \,, \\
 	& \forall z \in [-7n, (1+\epsilon)7n] \setminus\{x,y\}: \,
	\xi(z) < \tfrac 12 n^{1/\alpha}\,,\\
	&\qquad \qquad \quad \textstyle\sum_{i=x+1}^{y-1} \xi(i) >(m_\alpha-\eta) 
	(y-x) \big\}  \,,
\end{split}
\end{equation}
where we set $[a,b] := [a,b] \cap \Z$ for short.
By direct computation, one checks
easily that $\lim_{n\to\infty}\bbP(B_{\epsilon,\eta, n}) > 0$
for all fixed $\epsilon, \eta > 0$.
In what follows, we denote by $x, y$ the (random) points
appearing in the definition of $B_{\epsilon,\eta, n}$. 

Recalling \eqref{eq:psi}, for all $N \in \N$ we have
\begin{equation} \label{eq:tresplit}
\begin{split}
	\left( 1 - \tfrac{n}{N}(1+\gep) \right) n^{1/\alpha}
	\,<\, \psi_N(x) \,<\, \left( 1 - \tfrac{n}{N} \right) (1+\gep)  n^{1/\alpha}, \\
	\left( 1 - \tfrac{3n}{N} (1+\gep) \right) \tfrac53 n^{1/\alpha}
	\,<\, \psi_N(y) \,<\, \left( 1 - \tfrac{3n}{N} \right) (1+\gep)
	\tfrac53 n^{1/\alpha} \,.
\end{split}
\end{equation}
It follows that for all $N \in [\tfrac{11}{2}n, \tfrac{13}{2}n]$ we have
$\psi_N(x) > (1 - \frac{1+\gep}{11/2}) n^{1/\alpha} = (\frac{9}{11}+O(\gep)) n^{1/\alpha}$,
$\psi_N(y) > (1 - \frac{3(1+\gep)}{11/2}) \frac{5}{3} n^{1/\alpha}
=(\frac{25}{33}+O(\gep))  n^{1/\alpha}$,
while
$\psi_N(z) < \frac 12 n^{1/\alpha}$ for all
$z \in [-N, N] \setminus \{x,y\}$.
Therefore, by choosing $\gep$ small enough, we can state that on the
event $B_{\epsilon,\eta, n}$ and 
for all $n \in \N$, $N \in [\tfrac{11}{2}n,\tfrac{13}{2} n]$ it comes
\begin{equation}\label{eq:cond1}
	\{z_N^{(1)}, z_N^{(2)}\} = \{x,y\}.
\end{equation}

For $N = \frac{11}{2} n$
we have $\psi_N(x) = (\frac{9}{11} + O(\epsilon)) n^{1/\alpha}$ and
$\psi_N(y) = (\frac{25}{33} + O(\epsilon)) n^{1/\alpha}$, uniformly in $n$;
on the other hand, for $N = \frac{13}{2} n$
we have $\psi_N(x) = (\frac{11}{13} + O(\epsilon)) n^{1/\alpha}$ and
$\psi_N(y) = (\frac{35}{39} + O(\epsilon)) n^{1/\alpha}$, always uniformly in $n$.
It follows that, if $\epsilon > 0$ is chosen small enough
we have for all $n\in\N$,
\begin{equation}\label{eq:cond2}
\psi_{\frac{11}{2} n}(y) - \psi_{\frac{11}{2} n}(x) < 0\quad \text{but}\quad
\psi_{\frac{13}{2} n}(y) - \psi_{\frac{13}{2} n}(x) > 0.
\end{equation}
At this stage, we pick $\gep_0>0$ such that \eqref{eq:cond1} and \eqref{eq:cond2} are satisfied. Next observe that
\begin{equation} \label{eq:display}
	(N+1)(\psi_N(y) - \psi_N(x)) = x\xi(x) - y\xi(y)
	+ (N+1)(\xi(y) - \xi(x))
\end{equation}
is increasing in $N$, because by construction $(\xi(y) - \xi(x)) > 0$.
It follows that there is $N^*_n \in (\frac{11}{2}n, \frac{13}{2}n)$ such that:
\begin{itemize}
\item for $\frac{11}{2}n < N \le N_n^*$ we have $(\psi_N(y) - \psi_N(x)) < 0$, hence
$x = z_N^{(1)}$ and $y = z_N^{(2)}$;
\item for $N_n^* < N < \frac{13}{2} n$ we have $(\psi_N(y) - \psi_N(x)) > 0$, hence
$x = z_N^{(2)}$ and $y = z_N^{(1)}$.
\end{itemize}
By \eqref{eq:display} $(N+1)(\psi_N(y) - \psi_N(x))$ increases
by $(\xi(y) - \xi(x))$ when $N$ increases by $1$.
Since $(N_n^*+1)(\psi_{N_n^*}(y) - \psi_{N_n^*}(x)) < 0$
and $((N_n^*+1)+1)(\psi_{N_n^*+1}(y) - \psi_{N_n^*+1}(x)) > 0$,
it then follows that
$(N_n^*+1)(\psi_{N_n^*}(y) - \psi_{N_n^*}(x)) > -(\xi(y)-\xi(x))$, that is
\begin{equation} \label{eq:bound1212}
\begin{split}
	(N_n^*+1) (Z_{N_n^*}^{(1)} - Z_{N_n^*}^{(2)}) 
	& = ({N_n^*}+1)(\psi_{N_n^*}(x) - \psi_{N_n^*}(y)) \le
	(\xi(y)-\xi(x)) 	\\
	& \le \left( \frac{2}{3} + \frac{5}{3}\epsilon_0 \right) n^{1/\alpha}=:c_0\, n^{1/\alpha} \,,
\end{split}
\end{equation}
by the definition of the event $B_{\epsilon_0,\eta, n}$.

Consider now the contributions of the two $N$-steps random walk trajectories
$\cS^{(N, x)}$ and $\cS^{(N, y)}$ that reach respectively
$x$ and $y$ in the minimal number of steps and stick there
until time $N$, i.e.,
\begin{align*}
	\bP_{N,\xi}(\cS^{(N,x)}) 
	&=\kappa^N e^{\sum_{i=1}^{x-1} \xi(i) + (N+1) \psi_N(x)+x\log(\tfrac{\hat{\kappa}}{\kappa})},\\
\bP_{N,\xi}(\cS^{(N,y)})
	& =\kappa^N  e^{\sum_{i=1}^{y-1} \xi(i) + (N+1) \psi_N(y)+y\log(\tfrac{\hat{\kappa}}{\kappa})}, 
\end{align*}
so that 
\begin{equation}\label{eq:compaZ}
 \frac{\bP_{N,\xi}(\cS^{(N,y)})}{\bP_{{N},\xi}(\cS^{(N,x)})}
	 =
	\, e^{\sum_{i=x}^{y-1}\xi(i) - (N+1)(\psi_N(x)-\psi_N(y))+(y-x)\log(\tfrac{\hat{\kappa}}{\kappa})}. \\
\end{equation}
We apply this relation for $N=N_n^*$ on the event $\cB_{\gep_0,\eta_0,n}$,
with $2\,\eta_0 := m_\alpha+\log(\hat{\kappa}/\kappa)$ (which is
strictly positive, by
our initial assumtion). Then $x=z_{N_n^*}^{(1)}$, $y=z_{N_n^*}^{(2)}$ and \eqref{eq:compaZ} becomes
\begin{equation}
\begin{split}
\frac{\bP_{N_n^*,\xi}\big(\cS^{(N_n^*,z_{N_n^*}^{(2)})}\big)}{\bP_{N_n^*,\xi}\big(\cS^{(N_n^*,z_{N_n^*}^{(1)})}\big)}&\geq 
e^{\sum_{i=x+1}^{y-1} \xi(i)-(N_n^*+1)(Z_{N_n^*}^{(1)}-Z_{N_n^*}^{(2)})+(y-x)\log(\tfrac{\hat{\kappa}}{\kappa})},\\
&\geq e^{(m_\alpha-\eta+\log(\tfrac{\hat{\kappa}}{\kappa})) (y-x)-c_0 n^{1/\alpha}} \geq e^{\eta n-c_0 n^{1/\alpha}},
\end{split}
\end{equation}
where we have used \eqref{eq:bound1212}, the last condition in \eqref{eq:defB} and the fact that $y-x\geq n$, again by \eqref{eq:defB}.
Since $\alpha > 1$ by assumption, we have shown that on the event $\cB_{\gep_0,\eta_0,n}$
$$
	\bP_{{N_n^*},\xi}(\cS^{(N_n^*, z_{N_n^*}^{(2)})}) \gg 
	\bP_{{N_n^*},\xi}(\cS^{(N_n^*, z_{N_n^*}^{(1)})}) \,.
$$
If $\bP_{{N},\xi}(\cS^{(N, z_{N}^{(2)})}) +
\bP_{{N},\xi}(\cS^{(N, z_{N}^{(1)})}) > \tfrac{3}{4}$, this shows that
$\rw_{N,\xi} = z_N^{(2)}$. To sum up,
there exists $n_0\in \N$ such that for $n\geq n_0$
\begin{align*}
	B_{\epsilon_0,\eta_0 , n} 
	\subseteq \{ \exists N \in (\tfrac{11}{2}n, &\tfrac{13}{2}n):\
	\rw_{N,\xi} = z_N^{(2)}\}\\
 &\cup
	\big\{  \exists N \in (\tfrac{11}{2}n, \tfrac{13}{2}n)\colon \bP_{{N},\xi}(\cS^{(N, z_{N}^{(2)})}) +
	\bP_{{N},\xi}(\cS^{(N, z_{N}^{(1)})}) \le \tfrac{3}{4} \big\} \,.
\end{align*}
Recalling that
$\bP_{{N},\xi}(\cS^{(N, z_{N}^{(2)})}) +
\bP_{{N},\xi}(\cS^{(N, z_{N}^{(1)})}) \to 1$ as $N \to \infty$,
$\bbP(\dd\xi)$-almost surely when $d=1$,
cf. Remark~\ref{rem:1d}, it follows that almost surely
\begin{equation*}
	\limsup_{n\to\infty}B_{\epsilon_0,\eta_0, n} :=
	\{ B_{\epsilon_0,\eta_0, n} \text{ for infinitely many } n\} \subseteq
	\{ \rw_{N,\xi} = z_N^{(2)} \text{ for infinitely many } N\} \,.
\end{equation*}

Finally, note that $\bbP(\limsup_{n\to\infty}B_{\epsilon_0,\eta_0, n}) \ge \lim_{n\to\infty}
\bbP(B_{\epsilon_0,\eta_0, n}) > 0$, and it is not difficult to realize that indeed
$\bbP(\limsup_{n\to\infty}B_{\epsilon_0,\eta_0, n}) = 1$, because when $m \gg n$
the event $B_{\epsilon_0,\eta_0, m}$ is asymptotically independent of 
$B_{\epsilon_0,\eta_0, n}$. This completes the proof.

\bigskip


\begin{thebibliography}{9}


\bibitem{MR1939654}
P. Carmona and Y. Hu,
\textit{On the partition function of a directed polymer in a Gaussian
  random environment},
Probab. Theory Related Fields (2002) {\bf 124}, 431--457.

\bibitem{MR2073332}
F. Comets, T. Shiga, and N. Yoshida,
\textit{Probabilistic analysis of directed polymers 
in a random environment: a review}.
In \textit{Stochastic analysis on large scale interacting systems},
Adv. Stud. Pure Math. {\bf 39} (2004),
115--142, Math. Soc. Japan, Tokyo.


\bibitem{cf:GK}
J. G\"artner and W. K\"onig.
\textit{The parabolic Anderson model}. In
\textit{Interacting Stochastic
Systems} (2005), 153--179, Springer, Berlin.

\bibitem{cf:GM}
J. G\"artner and S.A. Molchanov.
\textit{Parabolic problems for the Anderson model. I.
Intermittency and related topics},
Comm. Math. Phys. {\bf 132} (1990), 613--655.

\bibitem{cf:HKM}
R. van der Hofstad, W. K\"onig and P. M\"orters, 
\textit{The universality classes in the parabolic Anderson model},
Comm. Math. Phys. {\bf 267} (2006), 307--353.

\bibitem{cf:IV2} D. Ioffe and Y. Velenik,
\textit{Stretched polymers in random environment},
preprint (2010), available at arXiv.org:1011.0266 [math.PR].

\bibitem{cf:KLMS}
W. K\"onig, H. Lacoin, P. M\"orters and N. Sidorova,
\textit{A two cities theorem for the parabolic Anderson model},
Ann. Probab. {\bf 37} (2009), 347--392.


\bibitem{MR2579463}
H. Lacoin,
\textit{New bounds for the free energy of directed polymers in dimension
{$1+1$} and {$1+2$}},
Comm. Math. Phys. {\bf 294} (2010), 471--503.


\end{thebibliography}
\end{document}